\numberwithin{equation}{section}
\numberwithin{figure}{section}
\theoremstyle{plain}
\newtheorem{thm}{\protect\theoremname}[section]
  \theoremstyle{plain}
  \newtheorem{assumption}[thm]{\protect\assumptionname}
  \theoremstyle{plain}
  \newtheorem{lem}[thm]{\protect\lemmaname}
  \theoremstyle{definition}
  \newtheorem{defn}[thm]{\protect\definitionname}
  \theoremstyle{remark}
  \newtheorem{rem}[thm]{\protect\remarkname}
  \theoremstyle{remark}
  \newtheorem*{rem*}{\protect\remarkname}
 \newlist{casenv}{enumerate}{4}
 \setlist[casenv]{leftmargin=*,align=left,widest={iiii}}
 \setlist[casenv,1]{label={{\itshape\ \casename} \arabic*.},ref=\arabic*}
 \setlist[casenv,2]{label={{\itshape\ \casename} \roman*.},ref=\roman*}
 \setlist[casenv,3]{label={{\itshape\ \casename\ \alph*.}},ref=\alph*}
 \setlist[casenv,4]{label={{\itshape\ \casename} \arabic*.},ref=\arabic*}
  \theoremstyle{plain}
  \newtheorem{cor}[thm]{\protect\corollaryname}
\let\emptyset\varnothing
  \providecommand{\assumptionname}{Assumption}
  \providecommand{\corollaryname}{Corollary}
  \providecommand{\definitionname}{Definition}
  \providecommand{\lemmaname}{Lemma}
  \providecommand{\remarkname}{Remark}
 \providecommand{\casename}{Case}
\providecommand{\theoremname}{Theorem}
  \providecommand{\assumptionname}{Assumption}
  \providecommand{\corollaryname}{Corollary}
  \providecommand{\definitionname}{Definition}
  \providecommand{\lemmaname}{Lemma}
  \providecommand{\remarkname}{Remark}
 \providecommand{\casename}{Case}
\providecommand{\theoremname}{Theorem}
\begin{document}

\title{Resolution of the wavefront set using general continuous wavelet
transforms}

\author{Jonathan Fell, Hartmut Führ, Felix Voigtlaender}

\address{J. Fell\\
 RWTH Aachen University \\
 Lehrstuhl C für Mathematik (Analysis) \\
 Pontdriesch 10 \\
 D-52062 Aachen \\
 Germany}

\address{H.~F{ü}hr, F. Voigtlaender\\
Lehrstuhl A f{ü}r Mathematik \\
RWTH Aachen University \\
 D-52056 Aachen\\
 Germany}
\begin{abstract}
We consider the problem of characterizing the wavefront set of a tempered
distribution $u\in\mathcal{S}'(\mathbb{R}^{d})$ in terms of its continuous
wavelet transform, where the latter is defined with respect to a suitably
chosen dilation group $H\subset{\rm GL}(\mathbb{R}^{d})$. In this
paper we develop a comprehensive and unified approach that allows
to establish characterizations of the wavefront set in terms of rapid
coefficient decay, for a large variety of dilation groups.

For this purpose, we introduce two technical conditions on the dual
action of the group $H$, called microlocal admissibilty and (weak)
cone approximation property. Essentially, microlocal admissibilty
sets up a systematical relationship between the scales in a wavelet
dilated by $h\in H$ on one side, and the matrix norm of $h$ on the
other side. The (weak) cone approximation property describes the ability
of the wavelet system to adapt its frequency-side localization to
arbitrary frequency cones. Together, microlocal admissibility and
the weak cone approximation property allow the characterization of
points in the wavefront set using multiple wavelets. Replacing the
weak cone approximation by its stronger counterpart gives access to
single wavelet characterizations.

We illustrate the scope of our results by discussing -- in any dimension
$d\ge2$ -- the similitude, diagonal and shearlet dilation groups,
for which we verify the pertinent conditions. As a result, similitude
and diagonal groups can be employed for multiple wavelet characterizations,
whereas for the shearlet groups a single wavelet suffices. In particular,
the shearlet characterization (previously only established for $d=2$)
holds in arbitrary dimensions. 
\end{abstract}
\maketitle
\global\long\def\with{\,\middle|\,}

\noindent \textbf{\small Keywords:}{\small {} wavefront set; square-integrable
group representation; continuous wavelet transform; anisotropic wavelet
systems; shearlets}{\small \par}

\noindent \textbf{\small AMS Subject Classification:}{\small {} 42C15;
42C40; 46F12}{\small \par}

\section{Introduction}

\subsection{Regular directed points and the wavefront set}

The wavefront set was introduced by Hörmander in \cite{Hoe_71} as
a means of analyzing mapping properties of Fourier integral operators.
This set can be understood as a particular model for singularities
in an otherwise regular object (e.g., edges in images), see the discussion
in \cite{CaDo,KuLa}. The ability to resolve the wavefront set (i.e.,
to characterize this set via coefficient decay) has become somewhat
of a benchmark problem for generalized wavelet systems and related
constructions in dimensions two and higher.

Before we give precise definitions, let us introduce some notation.
Given $R>0$ and $x\in\mathbb{R}^{d}$, we let $B_{R}(x)$ and $\overline{B_{R}}\left(x\right)$
denote the open/closed ball with radius $R$ and center $x$, respectively.
We let $S^{d-1}\subset\mathbb{R}^{d}$ denote the unit sphere. By
a neighborhood of $\xi\in S^{d-1}$, we will always mean a \emph{relatively
open} set $W\subset S^{d-1}$ with $\xi\in W$. Given $R>0$ and an
open set $W\subset S^{d-1}$, we let 
\[
C(W):=\left\{ r\xi'\with\xi'\in W,\, r>0\right\} =\left\{ \xi\in\mathbb{R}^{d}\setminus\left\{ 0\right\} \with\frac{\xi}{\left|\xi\right|}\in W\right\} \quad\text{ and }\quad C(W,R):=C(W)\setminus\overline{B_{R}}(0).
\]
Both sets are clearly open subsets of $\mathbb{R}^{d}\setminus\left\{ 0\right\} $
and thus of $\mathbb{R}^{d}$.

Given a tempered distribution $u$, we call $(x,\xi)\in\mathbb{R}^{d}\times S^{d-1}$
a \textbf{regular directed point of $u$} if there exists $\varphi\in C_{c}^{\infty}(\mathbb{R}^{d})$,
identically one in a neighborhood of $x$, as well as a $\xi$-neighborhood
$W\subset S^{d-1}$ such that for all $N\in\mathbb{N}$ there exists
a constant $C_{N}>0$ with 
\begin{equation}
\forall\xi'\in C\left(W\right)~:~\left|\widehat{\varphi u}(\xi')\right|\le C_{N}(1+|\xi'|)^{-N}.\label{eqn:decay_cond_reg_dir}
\end{equation}
A simple observation, which will nonetheless be important for the
following, is that this decay condition effectively only concerns
the behaviour at large frequencies: Since $\varphi u$ is a compactly
supported distribution, its Fourier transform is a continuous (even
smooth) function \cite[Theorem 7.23]{RudinFA}, and thus we may replace
$C\left(W\right)$ in equation \eqref{eqn:decay_cond_reg_dir} by
$C(W,R)$ for any $R>0$.

Informally speaking, regular directed points describe oriented local
regularity behaviour of a tempered distribution: If $(x,\xi)$ is
a regular directed point of $u$, then $u$ can be considered smooth
at $x$, when viewed in direction $\xi$. We define the \textbf{wavefront
set} of $u$ as the set of points $(x,\xi)$ which are not regular
directed points of $u$. The results in our paper will all be stated
as criteria for regular directed points.

\subsection{Continuous wavelet transforms in higher dimensions}

From the outset, the continuous wavelet transform in dimension one
has been understood as the ideal tool to analyze local regularity
of functions, see e.g. \cite{Ho_book} for an extensive discussion.
In higher dimensions, there are increasingly many possible generalizations
of the continuous wavelet transform available, and it is currently
not well-understood (except for isolated examples such as the shearlet
group \cite{KuLa,Grohs_2011} and the similitude group \cite{Pilipovic_et_al_2006})
how these different transforms fare at resolving the wavefront set.
It is the chief purpose of this paper to develop criteria that allow
to tackle this question in a unified and comprehensive approach.

Before we give a more detailed description of the aims of this paper,
let us introduce the necessary notions pertaining to continuous wavelet
transforms in some detail. We fix a closed matrix group $H<{\rm GL}(d,\mathbb{R})$,
the so-called \textbf{dilation group}, and let $G=\mathbb{R}^{d}\rtimes H$.
This is the group of affine mappings generated by $H$ and all translations.
Elements of $G$ are denoted by pairs $(x,h)\in\mathbb{R}^{d}\times H$,
and the product of two group elements is given by $(x,h)(y,g)=(x+hy,hg)$.
The left Haar measure of $G$ is given by ${\rm d}(x,h)=|\det(h)|^{-1}{\rm d}x\,{\rm d}h$.

$G$ acts unitarily on ${\rm L}^{2}(\mathbb{R}^{d})$ by the \textbf{quasi-regular
representation} defined by 
\begin{equation}
[\pi(x,h)f](y)=|\det(h)|^{-1/2}\cdot f\left(h^{-1}(y-x)\right)~.\label{eqn:def_quasireg}
\end{equation}
In particular, $\pi$ induces an action of $G$ on $\mathcal{S}(\mathbb{R}^{d})$,
the space of Schwartz functions.

We write $\left\langle \cdot\mid\cdot\right\rangle :\mathcal{S}'(\mathbb{R}^{d})\times\mathcal{S}\left(\mathbb{R}^{d}\right)\to\mathbb{C}$
for the natural extension of the ${\rm L}^{2}$-scalar product, which
means $\langle u\mid\psi\rangle:=u\left(\overline{\psi}\right)=\left\langle u,\overline{\psi}\right\rangle $.
For future reference, let us observe that a straightforward calculation
yields 
\begin{equation}
\left[\mathcal{F}\left(\pi\left(x,h\right)f\right)\right]\left(\xi\right)=\left|\det\left(h\right)\right|^{1/2}\cdot e^{-2\pi i\left\langle x,\xi\right\rangle }\cdot\widehat{f}\left(h^{T}\xi\right)\label{eq:QuasiRegularOnFourierSide}
\end{equation}
for $f\in{\rm L}^{1}\left(\mathbb{R}^{d}\right)+{\rm L}^{2}\left(\mathbb{R}^{d}\right)$.
Here, as in the remainder of the paper, we use the convention 
\[
\mathcal{F}f\left(\xi\right)=\widehat{f}\left(\xi\right)=\int_{\mathbb{R}^{d}}f\left(x\right)\cdot e^{-2\pi i\left\langle x,\xi\right\rangle }\,{\rm d}x
\]
for the Fourier transform of $f\in L^{1}\left(\mathbb{R}^{d}\right)$.

Now, given a tempered distribution $u\in\mathcal{S}'\left(\mathbb{R}^{d}\right)$
and some $\psi\in\mathcal{S}(\mathbb{R}^{d})$, we define the \textbf{wavelet
transform} of $u$ with respect to $\psi$ by 
\[
W_{\psi}u:G\to\mathbb{C}~,~(x,h)\mapsto\langle u\mid\pi(x,h)\psi\rangle.
\]

\subsection{Characterizing regular directed points by wavelet transform decay}

The basic principle of local regularity analysis via wavelets in dimension
one is that \emph{smoothness of a function $f$ near $x$ is equivalent
to decay of wavelet coefficients $W_{\psi}f(y,s)$ for $y$ near $x$
and small scales $s$.} It is the aim of this paper to extend this
principle to directional smoothness, by providing criteria for regular
directed points. The result should be an oriented version of the above-mentioned
principle: \emph{Smoothness of a function $f$ near $x$ in direction
$\xi$ is equivalent to decay of wavelet coefficients $W_{\psi}f(y,h)$
for $y$ near $x$, for small-scale wavelets $\pi(y,h)\psi$ oscillating
in directions near $\xi$.}

More specifically, given a continuous wavelet transform $W_{\psi}$
based on a suitable choice of dilation group $H$ and wavelet $\psi$,
we wish to establish results of the following form: 
\begin{eqnarray}
\quad(x,\xi)\mbox{ is a regular directed point of \ensuremath{u}} & \Leftrightarrow & \exists\mbox{ neighborhood }U\mbox{ of }x\,\forall y\in U\label{eqn:single_wvlt_char}\\
 &  & \forall h\in K\,\forall N\in\mathbb{N}~:|W_{\psi}u(y,h)|\le C_{N}\|h\|^{N}~,\nonumber 
\end{eqnarray}
where $K\subset H$ is a suitable subset of dilations which explicitly
depends on $\psi$ and a certain (sufficiently small) cone containing
directions near $\xi$. Intuitively, $K$ contains those $h\in H$
such that $\pi(0,h)\psi$ is a small-scale wavelet oscillating in
direction $\xi$.

A less ambitious approach allows \emph{multiple wavelets} for the
characterization of regular directed points, i.e., we ask whether,
for a suitable family $(\psi_{\lambda})_{\lambda\in\Lambda}$ of wavelets,
the following characterization is available: 
\begin{eqnarray}
\quad(x,\xi)\mbox{ is a regular directed point of \ensuremath{u}} & \Leftrightarrow & \exists\lambda\in\Lambda\,\exists\mbox{ neighborhood }U\mbox{ of }x\,\forall y\in U\label{eqn:multiple_wvlt_char}\\
 &  & \forall h\in K\,\forall N\in\mathbb{N}~:|W_{\psi_{\lambda}}u(y,h)|\le C_{N}\|h\|^{N}~,\nonumber 
\end{eqnarray}
again with $K\subset H$ depending explicitly on $\psi_{\lambda}$
and a suitable choice of frequency cone.

The literature contains examples related to both types of characterizations:
\cite[Theorem 5.1]{KuLa} is a single wavelet characterization of
the wavefront set using the shearlet transform, for directions $\xi=(\xi_{1},\xi_{2})^{T}\in\mathbb{R}^{2}$
belonging to the horizontal cone characterized by $\frac{|\xi_{1}|}{|\xi_{2}|}<1$.
The remaining directions are then characterized by a second shearlet
transform with coordinate axes interchanged. By contrast, \cite[Theorem 7]{Pilipovic_et_al_2006}
can be understood as a somewhat weaker form of the multiple wavelet
characterization (\ref{eqn:multiple_wvlt_char}), by describing regular
directed points in terms of the decay of $W_{\psi_{\lambda}}(\mathcal{\phi}u)$,
with a given family of wavelets $\psi_{\lambda}$ and additionally,
a freely choosable cutoff function $\phi$. Here, the underlying dilation
group is the similitude group (in certain dimensions), consisting
of rotations combined with scalar dilations.

\subsection{Proof strategy: Understanding the role of the dual action}

In this paper, we will provide a general framework that allows to
understand and extend the two mentioned examples, and to formulate
sufficient conditions for the characterizations (\ref{eqn:single_wvlt_char})
and (\ref{eqn:multiple_wvlt_char}). For this purpose, we will make
systematic use of the \textbf{dual action} and its properties. Mathematically
speaking, the dual action is just the (right) action $\mathbb{R}^{d}\times H\ni(\xi,h)\mapsto h^{T}\xi\in\mathbb{R}^{d}$.
This action plays a decisive role for the study of many questions
in connection with general continuous wavelet transforms, e.g. for
representation-theoretic questions such as irreducibility and direct
integral decompositions \cite{BeTa,Fu_LN}, in connection with general
wavelet inversion formulae \cite{FuMa,LaWeWeWi,Fu10}, or for wavelet
coorbit theory \cite{Fu_coorbit,Fu_atom,FuVo}. The results in this
paper naturally fit into this wider context.

When considering wavefront set characterizations with general dilation
groups, several obstacles arise: Most dilation groups (except for
the similitude group) do not have a built-in orientation and scale
parameter, thus it may be difficult or even impossible to have a meaningful
notion of small scale wavelets oscillating in direction $\xi$, and
thus to properly define the set $K\subset H$ in (\ref{eqn:single_wvlt_char})
and (\ref{eqn:multiple_wvlt_char}). The answer to this problem is
provided by the dual action: We restrict our attention to bandlimited
wavelets, say ${\rm supp}(\widehat{\psi})\subset V$, with a suitable
compact set $V$. Then equation (\ref{eq:QuasiRegularOnFourierSide})
reveals that the Fourier support of $\pi(y,h)\psi$ is contained in
$h^{-T}V$. This simple observation allows to assign direction and
scale to wavelets indexed by group elements $h\in H$. We start with
the direction part: Given a frequency cone $C(W,R)\subset\mathbb{R}^{d}$,
we introduce certain cone-affiliated subsets $K_{i},K_{o}\subset H$,
with $h\in K_{i}$ whenever $h^{-T}V\subset C(W,R)$ and $h\in K_{o}$
whenever $h^{-T}V\cap C(W,R)\not=\emptyset$. Thus $h\in K_{i}$ or
$K_{o}$ does allow to predict oscillatory behaviour of $\pi(y,h)\psi$
in direction $W$.

Furthermore, since our targeted wavelet characterizations measure
rapid decay in terms of the matrix norm $\|h\|$, we want to be able
to interpret this norm as a scale parameter for $\pi(y,h)\psi$. The
condition of \emph{microlocal admissibility} of the dual action is
tailored to establish a systematic relationship between the matrix
norm $\|h\|$ and the frequencies in the support of $(\pi(y,h)\psi)^{\wedge}$,
and thus ultimately permits a meaningful interpretation of the matrix
norm $\|h\|$ as the scale of $\pi(y,h)\psi$. These notions combined
will then allow to understand $\pi(y,h)\psi$, for $h\in K_{i}$ or
$K_{o}$, as a wavelet near $y$ of scale proportionate to $\|h\|\preceq R^{-\alpha}$
(for some positive $\alpha$) and oscillating in the directions contained
in $W$.

Besides these interpretation issues for elements of $H$, there is
a second challenge related to the study of wavelet criteria for wavefront
sets, which particularly concerns the question whether a single wavelet
suffices to characterize the wavefront set. Recall that the definition
of regular directed points involves two types of localization: Localization
in location, as expressed by the possibility to choose arbitrary cutoff
functions $\varphi$, as well as localization with respect to directions,
as expressed by the choice of the frequency cone $C(W,R)$. It is
fairly easy to see that wavelet transforms can adapt to the first
kind, in particular when using a notion of scale that is related to
the matrix norm. The second type of localization however is more subtle:
In order to adapt to arbitrarily small cone apertures (corresponding
to the set $W$), the wavelet system must be able to make increasingly
fine distinctions between orientations, at least as the scales go
to zero (i.e., as $R\to\infty$). It was observed in \cite{CaDo}
that the classical tensor wavelet system associated to a multiresolution
analysis does not possess this feature: The angular resolution does
not change over scales, and hence the wavelet system is not able to
resolve the wavefront set.

Consequently, \cite{CaDo} introduced curvelets as an alternative
to wavelets, with improved angular resolution: The curvelet system
is indexed by a family of circularly equidistant angle and logarithmically
equidistant scale parameters, with the number of angles doubling at
every other scale%
\footnote{As pointed out in \cite{CaDo}, the decomposition of frequencies that
underlies curvelets, was introduced earlier in \cite{Stein_93}, where
it is called \emph{second dyadic decomposition}.%
}, and this feature does allow to characterize regular directed points
via the decay of curvelet coefficients. It should be noted that curvelets
do not fit into the scheme presented here, since they are not based
on the action of a dilation group; they are however somewhat closely
related to shearlets \cite{KuLa}, which do arise in the manner sketched
above from the action of the so-called shearlet-dilation group (as
noted later in \cite{DaKuStTe}). The central insight of \cite{KuLa}
was that shearlets show a similar frequency-side behaviour as curvelets,
and as a result, they also characterize the wavefront set, at least
for directions in the horizontal cone.

Other groups, such as the similitude group, generate wavelet systems
that have the same angular resolution across all scales, and consequently,
their ability to resolve the wavefront set is limited. However, by
switching the wavelets, if necessary, it is possible to attain arbitrary
angular resolution. Thus the similitude group lends itself to a multiple
wavelet characterization of regular directed points.

In the setting of general continuous wavelet transforms over arbitrary
dilation groups, we therefore need a mathematical description of phenomena
like increasing frequency resolution for large scales. In view of
the fact that the curvelet system was described primarily by the frequency
localization of the different curvelets, it is not surprising that
once again the dual action proves useful; in particular, the sets
$K_{i}$ and $K_{o}$ defined above naturally enter in this context.
It turns out that there are precise and workable conditions available,
formulated in terms of inclusion properties of the cone-affiliated
sets $K_{o}$ and $K_{i}$ from above (related to varying parameters
$W,R$ and $V$), which allow such an assessment. Here, the pertinent
notions are the \emph{cone approximation property} introduced in Section
\ref{sect:character}, and a somewhat weaker version. Together with
microlocal admissibility, the cone approximation property allows to
establish single wavelet characterizations, whereas the weak cone
approximation property provides multiple wavelet characterizations.

\subsection{Overview of the paper}

The paper is structured as follows: Section \ref{sect:dual_action}
contains a discussion of the various conditions we impose on the dual
action that allow to meaningfully assign scale and direction to a
dilated wavelet. We introduce the cone-affiliated subsets $K_{i}$
and $K_{o}$, and study their basic properties, as well as the notion
of microlocal admissibility of the action. Section \ref{sect:central}
contains the central technical result of this paper: Theorem \ref{thm:almost_char}
relates the property that $(x,\xi)$ is a regular directed point to
the decay of wavelet coefficients near $x$ and dilations in certain
cone-affiliated sets $K_{i},K_{o}$. This result is not quite a characterization,
as it concludes the decay for the set $K_{i}$, and requires it for
the larger set $K_{o}$. In order to close this gap, Section \ref{sect:character}
introduces the cone approximation properties, which then enable us
to formulate and prove (single- or multiple) wavelet characterizations
in Theorem \ref{thm:char_wfset}. For the single orbit case (equivalently:
whenever $\pi$ is irreducible), the characterization results are
particularly satisfactory, see Corollary \ref{cor:char_wfset_orbit_weak_cone_approx}.
An alternative -- more geometric -- description of the (weak) cone-approximation
property is then shortly discussed in Section \ref{sect:geometric}.

Finally, in Section \ref{sect:examples}, we demonstrate that microlocal
admissibility and (weak) cone approximation property are actually
verifiable for many concrete and interesting cases. Specifically,
in each dimension $d\ge2$ we consider the diagonal, similitude and
shearlet groups, and show that there are multiple wavelet characterizations
available for the first two groups, and that single wavelet characterizations
hold for the shearlet case. This considerably extends the previously
known results: The similitude group case was considered in \cite{Pilipovic_et_al_2006},
and our results require extra conditions on the wavelets on the one
hand, but do not require a local cutoff function. For the shearlet
groups, the result was only established for $d=2$ in \cite{KuLa,Grohs_2011}.
The diagonal case seems to be completely new.

\section{Conditions on the dual action}

\label{sect:dual_action}Throughout this paper we will write $V\Subset\mathcal{O}$
to indicate that $V,\mathcal{O}\subset\mathbb{R}^{d}$ are open sets
and that the closure $\overline{V}\subset\mathcal{O}$ is a compact
subset of $\mathcal{O}$. We will always assume that the dilation
group $H$ fulfils the following assumptions, which are mostly connected
to (partial) wavelet inversion.
\begin{assumption}
\label{assume:proper_dual} There exists an open, $H^{T}$-invariant
subset $\mathcal{O}\subset\mathbb{R}^{d}$ with the following properties: 
\begin{enumerate}[label=(\alph*)]
\item \label{enu:DualActionIsProper}The dual action of $H$ on $\mathcal{O}$
is proper, i.e., for all compact sets $K\subset\mathcal{O}$, the
set 
\[
H_{K}:=\left\{ (h,\xi)\in H\times\mathcal{O}\with(h^{T}\xi,\xi)\in K\times K\right\} 
\]
is compact. 
\item \label{enu:DualActionContainsRays}For each $\xi\in\mathcal{O}$,
we have $\mathbb{R}^{+}\xi\subset\mathcal{O}$, where $\mathbb{R}^{+}:=\left(0,\infty\right)$. 
\item \label{enu:ExistenceOfAdmissibleFunction}There exists a Schwartz
function $\psi$ such that $\widehat{\psi}$ is compactly supported
inside $\mathcal{O}$, and in addition, $\psi$ fulfils the \textbf{admissibility
condition} 
\begin{equation}
\forall\xi\in\mathcal{O}~:~\int_{H}|\widehat{\psi}(h^{T}\xi)|^{2}\,{\rm d}h=1.\label{eq:AdmissibilityCondition}
\end{equation}

\item \label{enu:PolynomialGrowthOfMeasureOfIntersection}Given $\emptyset\neq V\Subset\mathcal{O}$
and $\xi\in\mathcal{O}$, we define 
\[
H_{\xi,V}=\left\{ h\in H\with h^{T}\xi\in V\right\} =\left(h\mapsto h^{T}\xi\right)^{-1}\left(V\right),
\]
which is a relatively compact open set because of $H_{\xi,V}\subset\pi_{1}\left(H_{\left\{ \xi\right\} \cup\overline{V}}\right)$,
where $\pi_{1}$ is the projection on the first coordinate.

We assume that for each $\emptyset\neq V\Subset\mathcal{O}$, there
are constants $C,\alpha\geq0$ such that the estimate 
\[
\forall\xi\in\mathcal{O}~:~\mu_{H}(H_{\xi,V})\le C\cdot\left(1+\left|\xi\right|\right)^{\alpha}
\]
is fulfilled, where $\mu_{H}$ denotes the left Haar measure on $H$.

\end{enumerate}
\end{assumption}
These assumptions may seem somewhat arbitrary and complicated, but
they are fulfilled in many relevant cases. In particular, if $\pi$
is an \textbf{(irreducible) square-integrable representation}, then
the results in \cite{Fu10,Fu96} show that the dual action has a single
open orbit $\mathcal{O}=\left\{ h^{T}\xi_{0}\with h\in H\right\} \subset\mathbb{R}^{d}$
of full measure (for some $\xi_{0}\in\mathcal{O}$), such that in
addition the stabilizer group $H_{\xi_{0}}=\left\{ h\in H\with h^{T}\xi_{0}=\xi_{0}\right\} $
is compact. In this case, any nonzero $\psi$ with $\widehat{\psi}\in C_{c}^{\infty}(\mathcal{O})$
will be admissible, after suitable normalization because the integral
in equation \eqref{eq:AdmissibilityCondition} is invariant under
the change $\xi\mapsto g^{T}\xi$ for $g\in H$ by left-invariance
of the Haar measure. But $H^{T}\xi=\mathcal{O}$ for any $\xi\in\mathcal{O}$.
Hence, the integral in equation \eqref{eq:AdmissibilityCondition}
is constant on $\mathcal{O}$.

Properness of the action follows from compactness of the stabilizer,
because a Baire-category argument shows that this implies that the
projection $p_{\xi_{0}}:H\rightarrow\mathcal{O},h\mapsto h^{T}\xi_{0}$
is proper.

Part \ref{enu:DualActionContainsRays} of the assumption follows from
the fact that $H^{T}\left(r\xi\right)=r\cdot H^{T}\xi=r\mathcal{O}$
is an open orbit. But the dual action only has one open orbit, which
implies $r\xi\in r\mathcal{O}=\mathcal{O}$. Finally, part \ref{enu:PolynomialGrowthOfMeasureOfIntersection}
of the assumption is taken care of in the following lemma: 
\begin{lem}
Assume that $\mathcal{O}=H^{T}\xi_{0}$ is an open $H$-orbit, with
associated compact stabilizers. Then 
\[
\mu_{H}\left(H_{\xi,V}\right)=\mu_{H}\left(H_{\xi_{0},V}\right)<\infty
\]
holds for all $\xi\in\mathcal{O}$ and $\emptyset\neq V\Subset\mathcal{O}$.\end{lem}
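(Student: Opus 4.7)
The plan is to exploit transitivity of the dual action on the single orbit $\mathcal{O}$ to realize $H_{\xi,V}$ as a left translate of $H_{\xi_{0},V}$, and then invoke left-invariance of the Haar measure $\mu_H$. The finiteness statement is then essentially a consequence of the properness Assumption \ref{assume:proper_dual}\ref{enu:DualActionIsProper} applied to the compact set $\{\xi_{0}\}\cup\overline{V}$, exactly along the lines of the containment $H_{\xi,V}\subset\pi_{1}(H_{\{\xi\}\cup\overline{V}})$ pointed out in the statement of that assumption.

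In more detail, since $\mathcal{O}=H^{T}\xi_{0}$ is a single $H^T$-orbit and $\xi\in\mathcal{O}$, there exists $g\in H$ with $\xi=g^{T}\xi_{0}$. For any $h\in H$ we then have
\[
h\in H_{\xi,V}\iff h^{T}\xi\in V\iff h^{T}g^{T}\xi_{0}\in V\iff (gh)^{T}\xi_{0}\in V\iff gh\in H_{\xi_{0},V},
\]
so that $H_{\xi,V}=g^{-1}H_{\xi_{0},V}$. Left-invariance of $\mu_{H}$ immediately gives
\[
\mu_{H}(H_{\xi,V})=\mu_{H}(g^{-1}H_{\xi_{0},V})=\mu_{H}(H_{\xi_{0},V}).
\]

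For the finiteness claim, the set $K:=\{\xi_{0}\}\cup\overline{V}$ is a compact subset of $\mathcal{O}$ because $V\Subset\mathcal{O}$ and $\xi_{0}\in\mathcal{O}$, so by Assumption \ref{assume:proper_dual}\ref{enu:DualActionIsProper} the set $H_{K}$ is compact in $H\times\mathcal{O}$. Since the first coordinate projection $\pi_{1}:H\times\mathcal{O}\to H$ is continuous, $\pi_{1}(H_{K})$ is a compact subset of $H$, and $H_{\xi_{0},V}\subset\pi_{1}(H_{K})$. Therefore $H_{\xi_{0},V}$ is relatively compact, and since the Haar measure $\mu_{H}$ is a Radon measure, we obtain $\mu_{H}(H_{\xi_{0},V})<\infty$.

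The main (and really only) conceptual point is the left-translation trick: one must be careful that transitivity is being used on the frequency side (the orbit of $\xi_{0}$ under $H^{T}$) while the Haar measure lives on the group side. The transposition converts the right action $\xi\mapsto h^{T}\xi$ into a left-multiplication structure on $H$, which is why $H_{\xi,V}$ ends up being a \emph{left} translate of $H_{\xi_{0},V}$ and the \emph{left} Haar measure is the correct invariant to use. Everything else is routine.
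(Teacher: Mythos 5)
Your proof is correct and follows essentially the same route as the paper's: realize $H_{\xi,V}=g^{-1}H_{\xi_{0},V}$ via $(gh)^{T}=h^{T}g^{T}$, apply left-invariance of $\mu_{H}$, and get finiteness from the precompactness of $H_{\xi_{0},V}$, which the paper already extracts from the properness assumption exactly as you do. No gaps.
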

\begin{proof}
Observe that if $\xi=g^{T}\xi_{0}$ with $g\in H$, then 
\[
H_{\xi,V}=\left\{ h\in H\with h^{T}g^{T}\xi_{0}\in V\right\} =\left\{ h\in H\with gh\in H_{\xi_{0},V}\right\} =g^{-1}H_{\xi_{0},V},
\]
showing that $H_{\xi,V}$ is a left translate of $H_{\xi_{0},V}$.
Since $\mu_{H}$ is left-invariant and $H_{\xi_{0},V}$ is precompact,
the claim follows.
\end{proof}
While the irreducible case provides the most satisfying results in
this paper, we have chosen to discuss the problem in a somewhat more
general setting, for the benefit of further investigations.

We next formally define the sets $K_{i}$ and $K_{o}$ which will
allow to associate group elements to directions. 
\begin{defn}
Let $\emptyset\neq W\subset S^{d-1}$ be open with $W\subset\mathcal{O}$
(which implies $C\left(W\right)\subset\mathcal{O}$). Furthermore,
let $\emptyset\neq V\Subset\mathcal{O}$ and $R>0$. We define 
\[
K_{i}(W,V,R):=\left\{ h\in H\with h^{-T}V\subset C(W,R)\right\} 
\]
as well as 
\[
K_{o}(W,V,R):=\left\{ h\in H\with h^{-T}V\cap C(W,R)\not=\emptyset\right\} .
\]
If the parameters are provided by the context, we will simply write
$K_{i}$ and $K_{o}$. Here, the subscripts $i/o$ stand for ``inner/outer''.
\end{defn}
These two types of sets are the central tool of our analysis. The
intuition behind their definition is that $K_{i}$ contains all dilations
$h$ with the property that the wavelets $\pi(y,h)\psi$ only ``see''
directions in the cone $C(W,R)$. Hence, local regularity in these
directions should entail a decay estimate for the wavelet coefficients
$\left(W_{\psi}u\right)\left(y,h\right)$ with $h\in K_{i}$. Here,
we used the property ${\rm supp}\left(\mathcal{F}\left[\pi\left(x,h\right)\psi\right]\right)\subset h^{-T}{\rm supp}\left(\smash{\widehat{\psi}}\right)\subset h^{-T}V$
which is immediate from equation \eqref{eq:QuasiRegularOnFourierSide}
as long as ${\rm supp}\left(\smash{\widehat{\psi}}\right)\subset V$
holds.

Conversely, $K_{o}$ contains all those dilations that contribute
to the (formal) \emph{wavelet reconstruction} 
\[
\widehat{\varphi u}\left(\xi\right)=\int_{\mathbb{R}^{d}}\int_{H}\left(W_{\psi}\varphi u\right)\left(y,h\right)\cdot\left(\mathcal{F}\left[\pi\left(y,h\right)\psi\right]\right)\left(\xi\right)\,\frac{{\rm d}h}{\left|\det\left(h\right)\right|}\,{\rm d}y
\]
of the frequency content of a (localized) tempered distribution $\varphi u$,
for $\xi\in C(W,R)$, again under the assumption ${\rm supp}\left(\smash{\widehat{\psi}}\right)\subset V$.
Thus, decay estimates for wavelet coefficients with dilations in $h\in K_{o}$
should allow to predict local regularity of $u$ in these directions.

The wavelet criteria that we will establish (cf. Theorem \ref{thm:almost_char})
and their proofs can be seen as a mathematically rigourous implementation
of these ideas. 
\begin{rem}
\label{rem:incl_prop}We have $K_{i}\subset K_{o}$. Furthermore,
$K_{o}\subset H$ is open and $K_{i}\subset H$ is a $G_{\delta}$
set. In particular, $K_{o}$ and $K_{i}$ are Borel-measurable. Also,
\[
K_{i}^{-T}V\subset C(W,R).
\]
In fact, $K_{i}$ is the largest set fulfilling this inclusion.

Another easy but useful observation is that $W\subset W'$, $V\subset V'$
and $R_{1}\ge R_{2}$ together imply 
\[
K_{o}(W,V,R_{1})\subset K_{o}(W',V',R_{2}),
\]
whereas $W\subset W'$, $V\supset V'$ and $R_{1}\ge R_{2}$ together
entail 
\[
K_{i}(W,V,R_{1})\subset K_{i}(W',V',R_{2}).
\]
\end{rem}
\begin{proof}
We only prove that $K_{o}$ is open and that $K_{i}$ is a $G_{\delta}$-set.
The other properties are easy to verify. First observe that 
\[
K_{o}=\bigcup_{\xi\in V}\left(h\mapsto h^{-T}\xi\right)^{-1}\left(C\left(W,R\right)\right)
\]
is open, because $C\left(W,R\right)$ is open.

Next, note that $V\subset\mathcal{O}\subset\mathbb{R}^{d}$ is an
open subset of $\mathbb{R}^{d}$, so that $V=\bigcup_{\ell}K_{\ell}$
is $\sigma$-compact. The definition of $K_{i}$ easily yields 
\[
K_{i}\left(W,V,R\right)=\bigcap_{\ell\in\mathbb{N}}K_{i}\left(W,K_{\ell},R\right),
\]
so that it suffices to show that each set $K_{i}\left(W,K_{\ell},R\right)$
is open.

To this end, let $h\in K_{i}\left(W,K_{\ell},R\right)$ be arbitrary.
This implies that $h^{-T}K_{\ell}\subset C\left(W,R\right)$ is a
compact set. As $C\left(W,R\right)$ is open, there is some $\varepsilon>0$
satisfying $B_{\varepsilon}\left(h^{-T}K_{\ell}\right)\subset C\left(W,R\right)$.
Let $L\subset H$ be an arbitrary compact unit-neighborhood. This
implies that 
\[
L\times h^{-T}K_{\ell}\rightarrow\mathbb{R}^{d},\left(g,\xi\right)\mapsto g^{-T}\xi
\]
is uniformly continuous. In particular, 
\[
\left|g^{-T}h^{-T}\xi-k^{-T}h^{-T}\xi\right|<\varepsilon
\]
holds for all $\xi\in K_{\ell}$ and all $g,k\in L$ with $\left\Vert g-k\right\Vert <\delta$
for a suitable $\delta>0$.

Setting $k={\rm id}$, we derive 
\[
\left(gh\right)^{-T}\xi\in B_{\varepsilon}\left(h^{-T}\xi\right)\subset B_{\varepsilon}\left(h^{-T}K_{\ell}\right)\subset C\left(W,R\right)
\]
for all $\xi\in K_{\ell}$ and $g\in L$ with $\left\Vert g-{\rm id}\right\Vert <\delta$.
Thus, $\left(B_{\delta}\left({\rm id}\right)\cap L\right)\cdot h\subset K_{i}\left(W,K_{\ell},R\right)$,
which implies that $K_{i}\left(W,K_{\ell},R\right)$ is an open subset
of $H$.
\end{proof}
We now come to the central technical assumption concerning the dual
action. Given a matrix $h$, we let $\|h\|$ denote the operator norm
of the induced linear map with respect to the euclidean norm. 
\begin{defn}
\label{defn:micro_regular}Let $\xi\in\mathcal{O}\cap S^{d-1}$ and
$\emptyset\neq V\Subset\mathcal{O}$. The dual action is called \textbf{$V$-microlocally
admissible in direction $\xi$} if there exists a $\xi$-neighborhood
$W_{0}\subset S^{d-1}\cap\mathcal{O}$ and some $R_{0}>0$ such that
the following hold: 
\begin{enumerate}[label=(\alph*)]
\item \label{enu:NormOfInverseEstimateOnKo}There exist $\alpha_{1}>0$
and $C>0$ such that 
\[
\|h^{-1}\|\le C\cdot\|h\|^{-\alpha_{1}}
\]
holds for all $h\in K_{o}(W_{0},V,R_{0})$. 
\item \label{enu:NormIntegrability}There exists $\alpha_{2}>0$ such that
\[
\int_{K_{o}(W_{0},V,R_{0})}\|h\|^{\alpha_{2}}\,{\rm d}h<\infty.
\]

\end{enumerate}
The dual action is called \textbf{microlocally admissible in direction
$\xi$} if it is $V$-microlocally admissible in direction $\xi$
for some $\emptyset\neq V\Subset\mathcal{O}$. It is called \textbf{globally
$V$-microlocally admissible} if there is $\emptyset\neq V\Subset\mathcal{O}$
such that the dual action is $V$-microlocally admissible in direction
$\xi$ for all $\xi\in\mathcal{O}\cap S^{d-1}$.
\end{defn}
We will see in the discussion below that these conditions are indeed
fulfilled in a variety of cases. 
\begin{rem}
\label{rem:MicrolocalAdmissiblityVCanBeShrunk}A simple but important
consequence of the above-observed inclusion properties for the $K_{o}$
(in particular the $V$-dependence, cf. Remark \ref{rem:incl_prop})
is that if the dual action is $V$-microlocally admissible in direction
$\xi$, it is $V'$-microlocally admissible in this direction for
all open $\emptyset\neq V'\subset V$. One can even choose the same
$W_{0},R_{0},\alpha_{1},\alpha_{2}$ and $C$ for all $V'\subset V$.

A similar reasoning allows to see that one may check the existence
of $W_{0},V,R_{0}$ fulfilling conditions \ref{enu:NormOfInverseEstimateOnKo}
and \ref{enu:NormIntegrability} separately, since decreasing $W_{0}$
and $V$ as well as increasing $R_{0}$ decreases $K_{o}(W_{0},V,R_{0})$,
hence it preserves the validity of properties \ref{enu:NormOfInverseEstimateOnKo}
and \ref{enu:NormIntegrability} in Definition \ref{defn:micro_regular}.
\end{rem}
In the case of a single orbit $\mathcal{O}=H^{T}\xi_{0}$, it suffices
to check $V$-microlocal admissibility in only one direction, as the
following lemma shows. 
\begin{lem}
\label{lem:SingleOrbitMicrolocalAdmissibility}Assume that $\mathcal{O}=H^{T}\xi_{0}$
is a single open orbit and let $\emptyset\neq V\Subset\mathcal{O}$.

If the dual action is $V$-microlocally admissible in direction $\xi_{1}$
for some $\xi_{1}\in\mathcal{O}\cap S^{d-1}$, then the dual action
is globally $V$-microlocally admissible.

One can even use the same exponents $\alpha_{1},\alpha_{2}$ (cf.
Definition \ref{defn:micro_regular}) for all $\xi\in\mathcal{O}\cap S^{d-1}$.\end{lem}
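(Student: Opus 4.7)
The plan is to exploit transitivity of the dual action on the single orbit $\mathcal{O} = H^T \xi_1$ in order to transport the $V$-microlocal admissibility data from $\xi_1$ to an arbitrary $\xi \in \mathcal{O} \cap S^{d-1}$. Given such $\xi$, I would pick $g \in H$ with $g^T \xi_1 = \xi$, which exists because $\xi \in \mathcal{O} = H^T \xi_1$; note that $|g^T \xi_1| = 1$ is forced by $|\xi| = 1$. The data $(W_0, R_0, C)$ associated to $\xi_1$ will then be converted into analogous data $(W_0', R_0', C')$ at $\xi$, while the exponents $\alpha_1, \alpha_2$ remain unchanged.

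The geometric heart of the argument is to manufacture a neighborhood of $\xi$ out of $W_0$. I would define $W_0' := \{g^T \eta / |g^T \eta| \with \eta \in W_0\} \subset S^{d-1}$; this is open (via the sphere-homeomorphism $\eta \mapsto g^T \eta / |g^T \eta|$), contains $\xi = g^T \xi_1$, is contained in $\mathcal{O}$ (by $H^T$-invariance), and satisfies $C(W_0') = g^T C(W_0)$. Since $g^T \overline{B_{R_0}}(0) \subset \overline{B_{\|g^T\|R_0}}(0)$, the choice $R_0' := \|g^T\|R_0$ yields $C(W_0', R_0') \subset g^T C(W_0, R_0)$. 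Combined with the identity $(kg)^{-T} V = g^{-T} k^{-T} V$, a short calculation produces the workhorse inclusion
\[
K_o(W_0', V, R_0') \subset K_o(W_0, V, R_0) \cdot g^{-1}.
\]

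With this inclusion in hand, conditions \ref{enu:NormOfInverseEstimateOnKo} and \ref{enu:NormIntegrability} of Definition \ref{defn:micro_regular} transfer cleanly. For \ref{enu:NormOfInverseEstimateOnKo}, each $h \in K_o(W_0', V, R_0')$ can be written $h = k g^{-1}$ with $k \in K_o(W_0, V, R_0)$, so $\|h^{-1}\| = \|g k^{-1}\| \le \|g\| \cdot C \|k\|^{-\alpha_1}$; combining this with the lower bound $\|k\| \ge \|h\|/\|g^{-1}\|$ (read off from $h = kg^{-1}$) gives $\|h^{-1}\| \le C\|g\|\|g^{-1}\|^{\alpha_1} \cdot \|h\|^{-\alpha_1}$, which is the required estimate with the \emph{same} $\alpha_1$. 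For \ref{enu:NormIntegrability}, the bound $\|kg^{-1}\|^{\alpha_2} \le \|g^{-1}\|^{\alpha_2}\|k\|^{\alpha_2}$ together with the fact that right translation by $g^{-1}$ rescales the left Haar measure $\mu_H$ by a positive constant depending only on $g$ (the value of the modular function at $g^{-1}$) shows that $\int_{K_o(W_0', V, R_0')} \|h\|^{\alpha_2} \, {\rm d}\mu_H(h)$ is finite, again without enlarging $\alpha_2$.

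The main (and really quite minor) obstacle is simply bookkeeping: one must ensure the cone inclusion goes in the direction forced by $(kg)^{-T} = g^{-T} k^{-T}$, and resist the temptation to transport $W_0$ by $g^{-T}$ instead of $g^T$. Once this is correctly arranged, the argument is purely algebraic, and the modular function appears only as a harmless multiplicative constant; there is no analytic subtlety beyond Remark \ref{rem:MicrolocalAdmissiblityVCanBeShrunk}.
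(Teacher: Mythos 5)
Your overall strategy is exactly the paper's: use transitivity to write $\xi=g^{T}\xi_{1}$, transport $(W_{0},R_{0})$ to a $\xi$-neighborhood $W_{0}'$ and radius $R_{0}'=\|g\|R_{0}$, prove a translate inclusion between the corresponding $K_{o}$-sets, and then push conditions \ref{enu:NormOfInverseEstimateOnKo} and \ref{enu:NormIntegrability} through with unchanged exponents. Your construction of $W_{0}'$ as the radial projection of $g^{T}W_{0}$ is even a bit cleaner than the paper's (which shrinks to a small ball $B_{\gamma}(\xi_{1})$ first), and the verification $C(W_{0}',R_{0}')\subset g^{T}C(W_{0},R_{0})$ is correct.

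However, there is a concrete algebra error at the pivotal step. The map $A\mapsto A^{-T}$ is the composition of two anti-automorphisms and hence a group \emph{automorphism}: $(kg)^{-T}=k^{-T}g^{-T}$, \emph{not} $g^{-T}k^{-T}$ as you assert. Consequently $(hg)^{-T}V=h^{-T}\left(g^{-T}V\right)$, which involves the transformed set $g^{-T}V$ and cannot be compared with the hypothesis $h^{-T}V\cap C(W_{0}',R_{0}')\neq\emptyset$; your claimed right-translate inclusion $K_{o}(W_{0}',V,R_{0}')\subset K_{o}(W_{0},V,R_{0})\cdot g^{-1}$ is false in general. What your cone inclusion actually yields is $(gh)^{-T}V=g^{-T}\left(h^{-T}V\right)$ meeting $g^{-T}C(W_{0}',R_{0}')\subset C(W_{0},R_{0})$, i.e. the \emph{left} translate $K_{o}(W_{0}',V,R_{0}')\subset g^{-1}\cdot K_{o}(W_{0},V,R_{0})$, which is precisely the inclusion \eqref{eq:SingleOrbitMicrolocalAdmissibilityFundamentalInclusion} in the paper. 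The repair downstream is mechanical: writing $h=g^{-1}k$ gives $\|h^{-1}\|=\|k^{-1}g\|\le C\|g\|\|k\|^{-\alpha_{1}}$ and $\|k\|\ge\|h\|/\|g^{-1}\|$, so condition \ref{enu:NormOfInverseEstimateOnKo} transfers with the same $\alpha_{1}$; and for condition \ref{enu:NormIntegrability} the left-invariance of $\mu_{H}$ applies directly, so the modular function you invoke is not needed at all. You should also note that the left translate is what the rest of your estimates silently assume is harmless, so once the direction is fixed nothing else breaks.
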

\begin{proof}
By assumption, there are $R_{0}>0$ and some $\xi_{1}$-neighborhood
$W_{0}\subset S^{d-1}\cap\mathcal{O}$ as well as $C,\alpha_{1},\alpha_{2}>0$
such that the conditions in Definition \ref{defn:micro_regular} are
fulfilled.

Observe that $B_{1}\left(\xi_{1}\right)\subset\mathbb{R}^{d}\setminus\left\{ 0\right\} $,
so that the map 
\[
\Phi:B_{1}\left(\xi_{1}\right)\to S^{d-1},w\mapsto\frac{w}{\left|w\right|}
\]
is well-defined and continous with $\Phi\left(\xi_{1}\right)=\xi_{1}$.
Hence, $\xi_{1}\in\Phi^{-1}\left(W_{0}\right)$, where the latter
set is open in $B_{1}\left(\xi_{1}\right)$. This shows that there
is some $\gamma\in\left(0,1\right)$ with 
\begin{equation}
\frac{w}{\left|w\right|}\in W_{0}\text{ for all }w\in B_{\gamma}\left(\xi_{1}\right).\label{eq:SingleOrbitMicrolocalAdmissibilityNormalizationInclusion}
\end{equation}

Now, let $\xi\in\mathcal{O}\cap S^{d-1}$ be arbitrary. As $\mathcal{O}$
is a single orbit with $\xi_{1}\in\mathcal{O}$, we get $\xi=h_{\xi}^{T}\xi_{1}$
for some $h_{\xi}\in H$. Define $R_{0}':=\left\Vert h_{\xi}\right\Vert R_{0}$
and $W_{0}':=\left[h_{\xi}^{T}\cdot B_{\gamma}\left(\xi_{1}\right)\right]\cap S^{d-1}$.
Observe that $W_{0}'$ is indeed a neighborhood of $\xi=h_{\xi}^{T}\xi_{1}$.
We will now prove 
\begin{equation}
K_{o}\left(W_{0}',V,R_{0}'\right)\subset h_{\xi}^{-1}\cdot K_{o}\left(W_{0},V,R_{0}\right).\label{eq:SingleOrbitMicrolocalAdmissibilityFundamentalInclusion}
\end{equation}
This will entail 
\[
\left\Vert \vphantom{h_{\xi}}\smash{h^{-1}}\right\Vert =\left\Vert \vphantom{h_{\xi}}\smash{\left(h_{\xi}h\right)^{-1}h_{\xi}}\right\Vert \leq\left\Vert \vphantom{h_{\xi}}\smash{\left(h_{\xi}h\right)^{-1}}\right\Vert \cdot\left\Vert h_{\xi}\right\Vert \overset{\left(\dagger\right)}{\leq}C\cdot\left\Vert h_{\xi}\right\Vert \cdot\left\Vert h_{\xi}h\right\Vert ^{-\alpha_{1}}\overset{\left(\ddagger\right)}{\leq}C\cdot\left\Vert h_{\xi}\right\Vert \left\Vert \vphantom{h_{\xi}}\smash{h_{\xi}^{-1}}\right\Vert ^{\alpha_{1}}\cdot\left\Vert h\right\Vert ^{-\alpha_{1}}
\]
for all $h\in K_{o}\left(W_{0}',V,R_{0}'\right)$, which is nothing
but part \ref{enu:NormOfInverseEstimateOnKo} of Definition \ref{defn:micro_regular}
at $\xi$ (with the same exponent $\alpha_{1}$). Here, we used $h_{\xi}h\in K_{o}\left(W_{0},V,R_{0}\right)$
at $\left(\dagger\right)$ and $\left\Vert h\right\Vert =\left\Vert \vphantom{h_{\xi}}\smash{h_{\xi}^{-1}h_{\xi}h}\right\Vert \leq\left\Vert \vphantom{h_{\xi}}\smash{h_{\xi}^{-1}}\right\Vert \cdot\left\Vert h_{\xi}h\right\Vert $
at $\left(\ddagger\right)$.

Furthermore, 
\begin{align*}
\int_{K_{o}\left(W_{0}',V,R_{0}'\right)}\left\Vert h\right\Vert ^{\alpha_{2}}\,{\rm d}h & \leq\int_{h_{\xi}^{-1}\cdot K_{o}\left(W_{0},V,R_{0}\right)}\left\Vert h_{\xi}^{-1}\cdot h_{\xi}h\right\Vert ^{\alpha_{2}}\,{\rm d}h\\
 & =\int_{K_{o}\left(W_{0},V,R_{0}\right)}\left\Vert \vphantom{h_{\xi}}\smash{h_{\xi}^{-1}}\cdot g\right\Vert ^{\alpha_{2}}\,{\rm d}g\\
 & \leq\left\Vert \vphantom{h_{\xi}}\smash{h_{\xi}^{-1}}\right\Vert ^{\alpha_{2}}\cdot\int_{K_{o}\left(W_{0},V,R_{0}\right)}\left\Vert g\right\Vert ^{\alpha_{2}}\,{\rm d}g<\infty,
\end{align*}
so that part \ref{enu:NormIntegrability} of Definition \ref{defn:micro_regular}
is also satisfied at $\xi$ (with the same exponent $\alpha_{2}$).

It remains to prove equation \eqref{eq:SingleOrbitMicrolocalAdmissibilityFundamentalInclusion}.
To this end, let $h\in K_{o}\left(W_{0}',V,R_{0}'\right)$ be arbitrary.
This yields some $v\in V$ with $h^{-T}v\in C\left(W_{0}',R_{0}'\right)$.
Hence, there are $w'\in W_{0}'\subset h_{\xi}^{T}\cdot B_{\gamma}\left(\xi_{1}\right)$
and $r>0$ with $h^{-T}v=r\cdot w'=r\cdot h_{\xi}^{T}w$ for some
$w\in B_{\gamma}\left(\xi_{1}\right)$. Together with equation \eqref{eq:SingleOrbitMicrolocalAdmissibilityNormalizationInclusion},
we see 
\[
\left(h_{\xi}h\right)^{-T}v=h_{\xi}^{-T}h^{-T}v=r\left|w\right|\cdot\frac{w}{\left|w\right|}\in r\left|w\right|\cdot W_{0}\subset C\left(W_{0}\right).
\]
Finally, $\left|h^{-T}v\right|>R_{0}'=\left\Vert h_{\xi}\right\Vert R_{0}$
because of $h^{-T}v\in C\left(W_{0}',R_{0}'\right)$. This implies
\[
\left\Vert h_{\xi}\right\Vert R_{0}<\left|h^{-T}v\right|=\left|h_{\xi}h_{\xi}^{-T}h^{-T}v\right|\leq\left\Vert h_{\xi}\right\Vert \cdot\left|h_{\xi}^{-T}h^{-T}v\right|=\left\Vert h_{\xi}\right\Vert \cdot\left|\left(h_{\xi}h\right)^{-T}v\right|
\]
and hence $\left|\vphantom{h_{\xi}}\smash{\left(h_{\xi}h\right)^{-T}}v\right|>R_{0}$.
In summary, we conclude 
\[
\left(h_{\xi}h\right)^{-T}v\in\left(h_{\xi}h\right)^{-T}V\cap C\left(W_{0},R_{0}\right)\neq\emptyset,
\]
which means $h_{\xi}h\in K_{o}\left(W_{0},V,R_{0}\right)$. Thus,
equation \eqref{eq:SingleOrbitMicrolocalAdmissibilityFundamentalInclusion}
is established.
\end{proof}
The following lemma provides important intuition for condition \ref{enu:NormOfInverseEstimateOnKo}
of microlocal admissibility of the dual action, by establishing a
systematic relationship between the norm of $h$ and the norms of
the frequencies contained in the support of $(\pi(y,h)\psi)^{\wedge}$. 
\begin{lem}
\label{lem:related_to_assumptions}Assume that the closed group $H\leq{\rm GL}\left(\mathbb{R}^{d}\right)$
satisfies the assumptions \ref{assume:proper_dual}. Then $0\notin\mathcal{O}$.
Furthermore, the following hold: 
\begin{enumerate}[label=(\alph*)]
\item \label{enu:XiEstimatedByH}Assume that $\emptyset\neq V\Subset\mathcal{O}$.
Then there exists a constant $C_{1}=C_{1}\left(V\right)>0$ such that,
for all $h\in H$ and all $\xi'\in V$: 
\[
\left|h^{-T}\xi'\right|^{-1}\le C_{1}\cdot\|h\|.
\]

\item \label{enu:HEstimatedByXi}Assume that the dual action fulfils condition
\ref{defn:micro_regular}\ref{enu:NormOfInverseEstimateOnKo}, for
some $\emptyset\not=V\Subset\mathcal{O}$, a suitable $\xi$-neighborhood
$W_{0}\subset S^{d-1}$ and some $R_{0}>0$.

Then there exist $\alpha>0$ and $C_{2}>0$ such that 
\[
\|h\|\le C_{2}\cdot\left|h^{-T}\xi'\right|^{-\alpha}.
\]
holds for all $h\in K_{o}(W_{0},V,R_{0})$ and $\xi'\in V$. 

\item \label{enu:NormBoundedOnKoAndDeterminantEstimate}Assume that the
dual action fulfils condition \ref{defn:micro_regular}\ref{enu:NormOfInverseEstimateOnKo},
for some $\emptyset\not=V\Subset\mathcal{O}$, a suitable $\xi$-neighborhood
$W_{0}\subset S^{d-1}$ and some $R_{0}>0$.

Then for all $\xi$-neighborhoods $W\subset W_{0}\subset S^{d-1}$
and all  $R\geq R_{0}$, the following is true: 
\[
{\rm sup}_{h\in K_{o}(W,V,R)}\left\Vert h\right\Vert \leq{\rm sup}_{h\in K_{o}(W_{0},V,R_{0})}\left\Vert h\right\Vert <\infty.
\]
Furthermore the inequalities 
\begin{equation}
\left|\det\left(h\right)\right|^{-\beta}\leq C_{3}^{\beta}\cdot\left\Vert h\right\Vert ^{-d\beta\alpha_{1}}\label{eq:InverseDeterminantEstimate}
\end{equation}
and 
\begin{equation}
\left(1+\left\Vert h^{-1}\right\Vert \right)^{M}\leq C_{4}\cdot\left\Vert h\right\Vert ^{-\alpha_{1}M},\label{eq:InverseNormEstimate}
\end{equation}
with $\alpha_{1}$ as in Definition \ref{defn:micro_regular}, hold
for all $h\in K_{o}\left(W,V,R\right)$ and all $\beta>0$ and $M\in\mathbb{N}_{0}$
for $C_{4}=C_{4}\left(M,W_{0},R_{0},V\right)>0$ and an absolute constant
$C_{3}=C_{3}\left(V,W_{0},R_{0}\right)>0$.

\end{enumerate}
\end{lem}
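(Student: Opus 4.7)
The plan is to establish $0 \notin \mathcal{O}$ first and then reduce each of the three numbered parts to a short operator-norm computation, exploiting the microlocal admissibility bound $\|h^{-1}\|\leq C\|h\|^{-\alpha_{1}}$ together with the fact that $V$ is bounded and stays away from the origin. I expect the argument that $0 \notin \mathcal{O}$ to be the only genuinely subtle step; the three parts of the lemma are then elementary.

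For $0\notin\mathcal{O}$ I proceed by contradiction. If $0\in\mathcal{O}$, openness of $\mathcal{O}$ yields $K:=\overline{B_{r}(0)}\subset\mathcal{O}$ for some $r>0$, and since $h^{T}\cdot 0=0\in K$ for every $h\in H$ we get $H\times\{0\}\subset H_{K}$; properness (Assumption~\ref{assume:proper_dual}\ref{enu:DualActionIsProper}) then forces $H$ to be compact. Compactness of $H\subset\mathrm{GL}(d,\mathbb{R})$ produces a uniform lower bound $\|h^{-1}\|^{-1}\geq c_{0}>0$, so that $|h^{T}\xi|\geq c_{0}|\xi|$ for all $h\in H$ and $\xi\in\mathbb{R}^{d}$. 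Fixing any $\xi_{0}\in\mathcal{O}\setminus\{0\}$ (available since $\widehat{\psi}\not\equiv 0$ is supported in $\mathcal{O}$) and using Assumption~\ref{assume:proper_dual}\ref{enu:DualActionContainsRays} to place the whole ray $\mathbb{R}^{+}\xi_{0}$ inside $\mathcal{O}$, I let $t\to\infty$: then $\{h^{T}(t\xi_{0}):h\in H\}$ eventually escapes the compact support of $\widehat{\psi}$, so the admissibility integral vanishes at $t\xi_{0}$, contradicting \eqref{eq:AdmissibilityCondition}.

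Given $0\notin\mathcal{O}$, for part~\ref{enu:XiEstimatedByH} I would write $\xi'=h^{T}(h^{-T}\xi')$ and pass to norms to obtain $|\xi'|\leq\|h\|\cdot|h^{-T}\xi'|$; since $\overline{V}$ is compact and avoids $0$, the minimum $c:=\min_{\xi'\in\overline{V}}|\xi'|$ is strictly positive, and $C_{1}:=1/c$ works. For part~\ref{enu:HEstimatedByXi} I would combine the microlocal admissibility bound with the dual inequality $|h^{-T}\xi'|\leq\|h^{-1}\|\cdot|\xi'|\leq CM_{V}\|h\|^{-\alpha_{1}}$, where $M_{V}:=\max_{\overline{V}}|\cdot|$; inverting this relation yields the claim with $\alpha:=1/\alpha_{1}$ and $C_{2}:=(CM_{V})^{1/\alpha_{1}}$.

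For part~\ref{enu:NormBoundedOnKoAndDeterminantEstimate}, the monotonicity observation $K_{o}(W,V,R)\subset K_{o}(W_{0},V,R_{0})$ from Remark~\ref{rem:incl_prop} reduces every estimate to the larger set. On that set, any $h$ admits a witness $\xi'\in V$ with $|h^{-T}\xi'|>R_{0}$; combined with $|h^{-T}\xi'|\leq CM_{V}\|h\|^{-\alpha_{1}}$ this gives the uniform upper bound $\|h\|\leq B:=(CM_{V}/R_{0})^{1/\alpha_{1}}$. The determinant estimate then follows from $|\det h|^{-1}=|\det h^{-1}|\leq\|h^{-1}\|^{d}\leq C^{d}\|h\|^{-d\alpha_{1}}$ raised to the $\beta$-th power, yielding $C_{3}:=C^{d}$; and the inverse-norm estimate from the observation $1\leq B^{\alpha_{1}}\|h\|^{-\alpha_{1}}$ (valid since $\|h\|\leq B$), which gives $1+\|h^{-1}\|\leq 1+C\|h\|^{-\alpha_{1}}\leq(B^{\alpha_{1}}+C)\|h\|^{-\alpha_{1}}$, whence $C_{4}:=(B^{\alpha_{1}}+C)^{M}$ after raising to the $M$-th power.
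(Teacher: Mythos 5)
Your proposal is correct and follows essentially the same route as the paper: properness at $0$ forces $H$ compact, which together with the compact Fourier support of the admissible $\psi$ and the ray condition \ref{assume:proper_dual}\ref{enu:DualActionContainsRays} rules out $0\in\mathcal{O}$ (the paper phrases this as boundedness of $\mathcal{O}$ contradicting the ray, you phrase it as the admissibility integral vanishing far out on the ray — the same argument), and parts \ref{enu:XiEstimatedByH}--\ref{enu:NormBoundedOnKoAndDeterminantEstimate} are the identical norm manipulations, Hadamard inequality, and use of the witness $\xi'\in h^{-T}V\cap C(W,R)$ with $|\xi'|>R_0$.
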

\begin{proof}
We first observe $0\notin\mathcal{O}$, because otherwise $H\times\left\{ 0\right\} =H_{\left\{ 0\right\} }$
is compact (cf. the properness assumption for the dual action in part
\ref{enu:DualActionIsProper} of Assumption \ref{assume:proper_dual}),
so that $H$ is compact. But by part \ref{enu:ExistenceOfAdmissibleFunction}
of Assumption \ref{assume:proper_dual} there is an admissible function
$\psi\in\mathcal{S}\left(\mathbb{R}^{d}\right)$ with $\widehat{\psi}\in C_{c}^{\infty}\left(\mathcal{O}\right)$.
This implies 
\[
0\neq1=\int_{H}\left|\widehat{\psi}\left(h^{T}\xi\right)^{2}\right|\,{\rm d}h
\]
for all $\xi\in\mathcal{O}$, so that there is some $h\in H$ with
$h^{T}\xi\in{\rm supp}\left(\smash{\widehat{\psi}}\right)$. Hence,
\[
\xi\in h^{-T}{\rm supp}\left(\smash{\widehat{\psi}}\right)\subset H^{T}{\rm supp}\left(\smash{\widehat{\psi}}\right),
\]
where the latter set is compact.

This shows that $\mathcal{O}\subset H^{T}{\rm supp}\left(\smash{\widehat{\psi}}\right)$
has to be bounded. But $\mathcal{O}$ is open with $0\in\mathcal{O}$,
so that there is some $\xi\in\mathcal{O}\setminus\left\{ 0\right\} $.
By part \ref{enu:DualActionContainsRays} of Assumption \ref{assume:proper_dual},
this yields $\mathbb{R}^{+}\xi\subset\mathcal{O}$, which contradicts
boundedness. This contradiction shows $0\notin\mathcal{O}$.

As $\overline{V}\subset\mathcal{O}\subset\mathbb{R}^{d}\setminus\left\{ 0\right\} $
is compact, we conclude that $C_{0}:=\min_{\xi'\in\overline{V}}\left|\xi'\right|$
is positive.

For the proof of \ref{enu:XiEstimatedByH}, we note that each $\xi'\in V$
satisfies the estimate 
\[
\left|\xi'\right|=\left|h^{T}h^{-T}\xi'\right|\le\|h^{T}\|\cdot\left|h^{-T}\xi'\right|,
\]
which entails 
\[
\left|h^{-T}\xi'\right|^{-1}\le\left\Vert h\right\Vert \big/\left|\xi'\right|\leq\frac{1}{C_{0}}\cdot\left\Vert h\right\Vert .
\]

For the proof of part \ref{enu:HEstimatedByXi}, we observe that our
assumptions yield constants $C,\alpha_{1}>0$ such that 
\[
\left|h^{-T}\xi'\right|\le\left\Vert h^{-1}\right\Vert \cdot\left|\xi'\right|\le C\cdot\max_{\eta\in\overline{V}}\left|\eta\right|\cdot\left\Vert h\right\Vert ^{-\alpha_{1}}
\]
holds for all $h\in K_{o}\left(W_{0},V,R_{0}\right)$. Taking both
sides to the power $-1/\alpha_{1}$ yields the claim with $\alpha=1/\alpha_{1}>0$.

For the proof of part \ref{enu:NormBoundedOnKoAndDeterminantEstimate},
let $h\in K_{o}(W,V,R)\subset K_{o}\left(W_{0},V,R_{0}\right)$ for
arbitrary $W\subset W_{0}$ and $R\geq R_{0}$ with $W_{0},R_{0}$
as above. By definition, this implies that there is some $\xi'\in h^{-T}V\cap C(W,R)$.
In particular, $|\xi'|>R\geq R_{0}$, and part \ref{enu:HEstimatedByXi},
applied to $h^{T}\xi'\in V$, yield 
\[
\|h\|\le C_{2}\cdot\left|h^{-T}h^{T}\xi'\right|^{-\alpha}=C_{2}\cdot\left|\xi'\right|^{-\alpha}\le C_{2}\cdot R_{0}^{-\alpha}.
\]
For the proof of equation \eqref{eq:InverseDeterminantEstimate},
recall that Hadamard's inequality implies $\left|\det\left(g\right)\right|\leq\left\Vert g\right\Vert ^{d}$
for all $g\in\mathbb{R}^{d\times d}$. We conclude 
\[
\left|\det\left(h\right)\right|^{-\beta}=\left|\det\left(h^{-1}\right)\right|^{\beta}\leq\left\Vert h^{-1}\right\Vert ^{\beta d}\leq C^{\beta d}\cdot\left\Vert h\right\Vert ^{-\alpha_{1}\beta d}
\]
for all $h\in K_{o}\left(W_{0},V,R_{0}\right)\supset K_{o}\left(W,V,R\right)$,
where the estimate in the last step is due to part \ref{enu:NormOfInverseEstimateOnKo}
of Definition \ref{defn:micro_regular}.

Finally, 
\[
1=\left\Vert h^{-1}h\right\Vert \leq\left\Vert h^{-1}\right\Vert \cdot\left\Vert h\right\Vert \leq C_{4}\cdot\left\Vert h^{-1}\right\Vert 
\]
holds for all $h\in K_{o}\left(W_{0},V,R_{0}\right)\supset K_{o}\left(W,V,R\right)$,
because $\left\Vert h\right\Vert $ is bounded on this set. Using
the constant $C$ provided by Definition \ref{defn:micro_regular}\ref{enu:NormOfInverseEstimateOnKo},
we see 
\[
\left(1+\left\Vert h^{-1}\right\Vert \right)^{M}\leq\left(\left(C_{4}+1\right)\left\Vert h^{-1}\right\Vert \right)^{M}\leq\left[C\left(1+C_{4}\right)\cdot\left\Vert h\right\Vert ^{-\alpha_{1}}\right]^{M},
\]
which establishes estimate \eqref{eq:InverseNormEstimate}.
\end{proof}

\section{Wavelet criteria for regular directed points}

\label{sect:central}We first establish some basic growth or decay
estimates concerning wavelet transforms of tempered distributions
and Schwartz functions. In the following, we use the Schwartz norms
\[
\left|\psi\right|_{N}:=\max_{\alpha\in\mathbb{N}_{0}^{d},\,\left|\alpha\right|\leq N}\sup_{z\in\mathbb{R}^{d}}(1+|z|)^{N}\left|\partial^{\alpha}\psi(z)\right|.
\]
Note that these norms are invariant under complex conjugation. 
\begin{lem}
\label{lem:wc_decay_general}Let $\psi,\varphi\in\mathcal{S}(\mathbb{R}^{d})$
and $u\in\mathcal{S}'(\mathbb{R}^{d})$. 
\begin{enumerate}[label=(\alph*)]
\item \label{enu:QuasiRegularSchwartzNormForFunctions}For all $N\in\mathbb{N}$
and $(x,h)\in G$, the following inequality holds: 
\[
\left|\pi(x,h)\psi\right|_{N}\le C_{N}|\psi|_{N}\cdot\left|\det(h)\right|^{-1/2}(1+\|h^{-1}\|)^{N}\cdot\max\left\{ 1,\|h\|^{N}\right\} \cdot(1+|x|)^{N},
\]
with a constant $C_{N}$ independent of $\psi,x,h$. 
\item \label{enu:WaveletTrafoGeneralEstimateForDistributions}There exists
$N=N\left(u\right)\in\mathbb{N}$ such that for all $\left(x,h\right)\in G$,
the following inequality holds: 
\[
|W_{\psi}u(x,h)|\le C\cdot\left|\det(h)\right|^{-1/2}(1+\|h^{-1}\|)^{N}\cdot\max\left\{ 1,\|h\|^{N}\right\} \cdot(1+|x|)^{N}
\]
with $C>0$ depending on $\psi$ and $u$ but not on $x,h$. 
\item \label{enu:WaveletTrafoGeneralEstimateForSchwartzFunctions}For all
$N\in\mathbb{N}$ and $(x,h)\in G$, we have 
\[
|W_{\psi}\varphi(x,h)|\le C_{N}|\varphi|_{d+N+1}|\psi|_{N}\cdot\left|\det(h)\right|^{-1/2}(1+\|h^{-1}\|)^{N}\cdot\max\left\{ 1,\|h\|^{N}\right\} \cdot(1+|x|)^{-N}
\]
with $C_{N}$ independent of $\varphi,\psi,h,x$. 
\item \label{enu:WaveletTrafoEstimateForSchwartzFunctionsOnTheCone}Assume
that the dual action is $V$-microlocally admissible at $\xi$ for
some $\emptyset\neq V\Subset\mathcal{O}$ and that $\psi\in\mathcal{S}\left(\mathbb{R}^{d}\right)$
with ${\rm supp}(\widehat{\psi})\subset V$. Choose $R_{0}>0$ and
a $\xi$-neighborhood $W_{0}\subset S^{d-1}$ as in Definition \ref{defn:micro_regular}.
Then 
\[
|W_{\psi}\varphi(x,h)|\le C_{M,N,\psi,W_{0},V,R_{0}}\cdot|\varphi|_{M+N}\cdot\left|\det(h)\right|^{-1/2}\cdot(1+|x|)^{-N}\|h\|^{M}
\]
holds for all $x\in\mathbb{R}^{d}$, $h\in K_{o}\left(W_{0},V,R_{0}\right)$
and $M,N\in\mathbb{N}$, where the constant $C_{M,N,\psi,W_{0},V,R_{0}}$
is independent of $x,h$ and $\varphi$. 
\end{enumerate}
\end{lem}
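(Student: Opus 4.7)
The plan is to establish the four estimates in order, deriving each from the previous where possible and reserving the real work for (d). For part (a) I would compute directly: the formula $[\pi(x,h)\psi](z)=|\det(h)|^{-1/2}\psi(h^{-1}(z-x))$ together with the chain rule writes $\partial^\alpha[\pi(x,h)\psi](z)$ as a linear combination of terms whose coefficients are bounded by $|\det(h)|^{-1/2}\|h^{-1}\|^{|\alpha|}$ and whose functional parts are derivatives of $\psi$ evaluated at $h^{-1}(z-x)$. To control the growth in $z$, I would iterate Peetre's inequality $(1+|z|)\le(1+|x|)(1+|z-x|)\le(1+|x|)(1+\|h\|)(1+|h^{-1}(z-x)|)$ to split $(1+|z|)^N$ into $(1+|x|)^N\max(1,\|h\|^N)(1+|h^{-1}(z-x)|)^N$; the last factor is absorbed into the Schwartz decay of $\psi$, and the stated estimate drops out.

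Part (b) is immediate once (a) is in hand: every $u\in\mathcal{S}'(\mathbb{R}^d)$ is continuous, so $|u(\phi)|\le C|\phi|_N$ for some $N=N(u)$, $C=C(u)$ and all $\phi\in\mathcal{S}(\mathbb{R}^d)$; applying this to $\phi=\overline{\pi(x,h)\psi}$ and using conjugation-invariance of the Schwartz norms yields the claim. Part (c) does not follow from (a) by a direct duality argument, since that would not manifest the $(1+|x|)^{-N}$ factor, so I would argue in the spatial domain: after substituting $y=hw+x$ in the integral defining $W_\psi\varphi(x,h)$, I would apply the Schwartz bounds $|\varphi(hw+x)|\le|\varphi|_{d+N+1}(1+|hw+x|)^{-(d+N+1)}$ and $|\psi(w)|\le|\psi|_N(1+|w|)^{-N}$, and then use Peetre to extract $(1+|x|)^{-N}\max(1,\|h\|^N)$ from $(1+|hw+x|)^{-N}$. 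The surplus $(1+|w|)^N$ cancels part of the $\psi$-decay, the remaining $(1+|hw+x|)^{-(d+1)}$ integrates in $w$ to $C_d|\det(h)|^{-1}$, and the combined determinant prefactors yield $|\det(h)|^{-1/2}$; the $(1+\|h^{-1}\|)^N$ factor appearing in the statement is then accommodated trivially as an upper bound of $1$.

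Part (d) is the main substance and is where I expect the principal obstacle. The decisive idea is to pass to the Fourier side: Plancherel combined with \eqref{eq:QuasiRegularOnFourierSide} gives
\[
W_\psi\varphi(x,h)=|\det(h)|^{1/2}\int_{h^{-T}V}\widehat{\varphi}(\xi)\,e^{2\pi i\langle x,\xi\rangle}\,\overline{\widehat{\psi}(h^T\xi)}\,d\xi,
\]
since ${\rm supp}(\widehat{\psi})\subset V$. To obtain the $(1+|x|)^{-N}$ factor I would perform $N$-fold integration by parts in $\xi$, dominating the result by $L^1(h^{-T}V)$ norms of $\partial^\alpha_\xi(\widehat{\varphi}(\xi)\overline{\widehat{\psi}(h^T\xi)})$ for $|\alpha|\le N$; each derivative falling on $\widehat{\psi}(h^T\xi)$ contributes a factor of order $\|h\|$, which is harmless because $\|h\|$ is uniformly bounded on $K_o(W_0,V,R_0)$ by Lemma \ref{lem:related_to_assumptions}\ref{enu:NormBoundedOnKoAndDeterminantEstimate}. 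To manufacture the $\|h\|^M$ factor I would invoke microlocal admissibility: Lemma \ref{lem:related_to_assumptions}\ref{enu:XiEstimatedByH} applied to $h^T\xi\in V$ yields $|\xi|\ge C^{-1}\|h\|^{-1}$ for every $\xi\in h^{-T}V$, so that the Schwartz bound $|\partial^\beta\widehat{\varphi}(\xi)|\le|\widehat{\varphi}|_{K}(1+|\xi|)^{-K}$ converts into $(1+|\xi|)^{-K}\le C^K\|h\|^K$ uniformly on the integration domain; taking $K$ large compared to $M$ and absorbing the volume factor ${\rm vol}(h^{-T}V)=|\det(h)|^{-1}|V|$ into the $|\det(h)|^{1/2}$ prefactor delivers $|\det(h)|^{-1/2}\|h\|^M$. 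The most tedious step is the bookkeeping: tracking how many derivatives fall on each factor in the integration by parts, relating Schwartz norms of $\widehat{\varphi}$ back to $|\varphi|_{M+N}$, and verifying that the constants depend only on $M,N,\psi,W_0,V,R_0$ and not on the particular $h\in K_o(W_0,V,R_0)$.
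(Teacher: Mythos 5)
Your proposal is correct and follows essentially the same route as the paper: chain rule plus Peetre's inequality for (a), distributional continuity for (b), the convolution/decay estimate for (c) (the paper phrases your change of variables as the identity $W_\psi\varphi(\cdot,h)=\varphi*\pi(0,h)\psi^*$ combined with part (a), but the computation is the same), and for (d) the same Fourier-side argument: integration by parts for $(1+|x|)^{-N}$, Leibniz, boundedness of $\|h\|$ on $K_o(W_0,V,R_0)$, and Lemma \ref{lem:related_to_assumptions}\ref{enu:XiEstimatedByH} to convert $(1+|\xi|)^{-K}$ on $h^{-T}V$ into $\|h\|^K$. The only cosmetic deviation is that in (d) you absorb the volume ${\rm vol}(h^{-T}V)=|\det(h)|^{-1}|V|$ whereas the paper integrates $|\partial^\gamma\widehat{\psi}(h^T\cdot)|$ over $\mathbb{R}^d$ to extract the same $|\det(h)|^{-1}$ factor.
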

\begin{proof}
For the proof of part \ref{enu:QuasiRegularSchwartzNormForFunctions},
we first compute the effect of dilation on the Schwartz norm. Here
we have 
\begin{align*}
\left|\pi(0,h)\psi\right|_{N} & =\left|\det(h)\right|^{-1/2}\cdot\sup\left\{ (1+|z|)^{N}|\partial^{\alpha}(y\mapsto\psi(h^{-1}y))(z)|\with|\alpha|\le N,z\in\mathbb{R}^{d}\right\} \\
 & \le C_{N}\left|\det(h)\right|^{-1/2}\cdot(1+\|h^{-1}\|)^{N}\cdot\sup\left\{ (1+|z|)^{N}|(\partial^{\alpha}\psi)(h^{-1}z)|\with|\alpha|\le N,z\in\mathbb{R}^{d}\right\} ,
\end{align*}
using the chain rule. Now we can continue estimating 
\begin{align*}
\ldots & \le C_{N}|\psi|_{N}\left|\det(h)\right|^{-1/2}\cdot(1+\|h^{-1}\|)^{N}\cdot\sup\left\{ (1+|z|)^{N}(1+|h^{-1}z|)^{-N}\with z\in\mathbb{R}^{d}\right\} \\
 & \le C_{N}'\cdot|\psi|_{N}\cdot\left|\det(h)\right|^{-1/2}(1+\|h^{-1}\|)^{N}\cdot\max\left\{ 1,\|h\|^{N}\right\} .
\end{align*}
In the last step, we made use of the elementary estimate 
\[
1+\left|z\right|=1+\left|hh^{-1}z\right|\leq1+\left\Vert h\right\Vert \cdot\left|h^{-1}z\right|\leq\left(1+\left\Vert h\right\Vert \right)\cdot\left(1+\left|h^{-1}z\right|\right)
\]
which leads to $\left(1+\left|z\right|\right)^{N}\left(1+\left|h^{-1}z\right|\right)^{-N}\leq\left(1+\left\Vert h\right\Vert \right)^{N}$.

By a similar (but easier) argument (using the inequality \eqref{eq:TriangleInequalityProductForm}
below), we get 
\[
\left|\pi(x,{\rm id})\psi\right|_{N}\le(1+|x|)^{N}|\psi|_{N}.
\]
Now \ref{enu:QuasiRegularSchwartzNormForFunctions} follows from these
calculations together with $\pi(x,h)=\pi\left(x,{\rm id}\right)\circ\pi\left(0,h\right)$.

For $u\in\mathcal{S}'(\mathbb{R}^{d})$, there exists $N=N\left(u\right)\in\mathbb{N}$
and a constant $C=C\left(u\right)>0$ such that $\left|u\left(\psi\right)\right|\leq C\cdot\left|\psi\right|_{N}$
holds for all Schwartz functions $\psi\in\mathcal{S}\left(\mathbb{R}^{d}\right)$.
Together with the definition $\left(W_{\psi}u\right)\left(x,h\right)=\left\langle u\mid\pi\left(x,h\right)\psi\right\rangle $,
we see that part \ref{enu:WaveletTrafoGeneralEstimateForDistributions}
follows from part \ref{enu:QuasiRegularSchwartzNormForFunctions}.

Part \ref{enu:WaveletTrafoGeneralEstimateForSchwartzFunctions} follows
from similar considerations: We first consider the decay behaviour
of convolution products of Schwartz functions: For any $N\in\mathbb{N}$,
we use the inequality 
\begin{equation}
1+\left|x\right|\leq1+\left|x-y\right|+\left|y\right|\leq\left(1+\left|x-y\right|\right)\left(1+\left|y\right|\right)\label{eq:TriangleInequalityProductForm}
\end{equation}
to derive 
\begin{align*}
\left(1+\left|y\right|\right)^{-d-1-N}\left(1+\left|x-y\right|\right)^{-N} & =\left(1+\left|y\right|\right)^{-d-1}\left[\left(1+\left|x-y\right|\right)\left(1+\left|y\right|\right)\right]^{-N}\\
 & \leq\left(1+\left|y\right|\right)^{-d-1}\cdot\left(1+\left|x\right|\right)^{-N}
\end{align*}
and thus 
\begin{align*}
\left|\left(\varphi\ast\psi\right)(x)\right| & \le|\varphi|_{d+N+1}|\psi|_{N}\int_{\mathbb{R}^{d}}(1+|y|)^{-d-1-N}(1+|x-y|)^{-N}\,{\rm d}y\\
 & \le C\cdot|\varphi|_{d+N+1}|\psi|_{N}\cdot(1+|x|)^{-N}
\end{align*}
with $C=C_{d}=\int_{\mathbb{R}^{d}}\left(1+\left|y\right|\right)^{-d-1}\,{\rm d}y$.

We now combine this observation with part \ref{enu:QuasiRegularSchwartzNormForFunctions}
and with the (easily verifiable) identity 
\[
\left(W_{\psi}\varphi\right)(x,h)=(\varphi\ast\pi(0,h)\psi^{*})(x),
\]
where $\psi^{\ast}\left(y\right)=\overline{\psi\left(-y\right)}$,
to obtain the desired estimate.

Finally, for part \ref{enu:WaveletTrafoEstimateForSchwartzFunctionsOnTheCone},
we first apply the standard Fourier-analytic arguments relating smoothness
on the Fourier side and decay on the space side together with 
\begin{align*}
\left(W_{\psi}\varphi\right)\left(x,h\right) & =\left[\mathcal{F}^{-1}\left(\mathcal{F}\left(\varphi\ast\left[\pi\left(0,h\right)\psi^{\ast}\right]\right)\right)\right]\left(x\right)\\
 & =\left[\mathcal{F}^{-1}\left(\widehat{\varphi}\cdot\left(\pi(0,h)\psi^{*}\right)^{\wedge}\right)\right]\left(x\right)
\end{align*}
to derive 
\[
|W_{\psi}\varphi(x,h)|\le C_{N}\cdot(1+|x|)^{-N}\cdot\max_{|\alpha|\le N}\left\Vert \partial^{\alpha}\left(\widehat{\varphi}\cdot\left(\pi(0,h)\psi^{*}\right)^{\wedge}\right)\right\Vert _{1}.
\]
Using Leibniz' formula, together with the fact that $(\pi(0,h)\psi^{*})^{\wedge}=\left|\det\left(h\right)\right|^{1/2}\cdot\overline{\widehat{\psi}\left(h^{T}\cdot\right)}$
is supported inside $h^{-T}V$, each integrand can be estimated by
\begin{align*}
 & \hphantom{\leq\,}\left|\left[\partial^{\alpha}\left(\widehat{\varphi}\cdot\left(\pi\left(0,h\right)\psi^{\ast}\right)^{\wedge}\right)\right]\left(\xi'\right)\right|\\
 & \leq C_{N}\cdot\left|\det(h)\right|^{1/2}(1+\|h\|)^{N}\cdot\sum_{\beta+\gamma=\alpha}\binom{\alpha}{\gamma}\,\left|\left(\smash{\partial^{\beta}\widehat{\varphi}}\right)(\xi')\right|\,\left|\left(\smash{\partial^{\gamma}\widehat{\psi}}\right)(h^{T}\xi')\right|\\
 & \leq C_{N}'\cdot\left[|\det(h)|^{1/2}(1+\|h\|)^{N}\sum_{|\gamma|\le N}\left|\left(\smash{\partial^{\gamma}\widehat{\psi}}\right)(h^{T}\xi')\right|\right]\cdot|\varphi|_{M+N}\cdot\sup_{\eta\in h^{-T}V}(1+|\eta|)^{-M}.
\end{align*}
But Lemma \ref{lem:related_to_assumptions}\ref{enu:XiEstimatedByH}
yields 
\[
\sup_{\eta\in h^{-T}V}(1+|\eta|)^{-M}=\sup_{\xi'\in V}(1+|h^{-T}\xi'|)^{-M}\leq\sup_{\xi'\in V}\left|h^{-T}\xi'\right|^{-M}\le C_{M,V}''\cdot\|h\|^{M}.
\]
Furthermore, Lemma \ref{lem:related_to_assumptions}\ref{enu:NormBoundedOnKoAndDeterminantEstimate}
allows to bound the factor $(1+\|h\|)^{N}$ uniformly on $K_{o}(W_{0},V,R_{0})$.

In total, we arrive at 
\begin{align*}
|W_{\psi}\varphi(x,h)| & \le C_{N,M,V,W_{0},R_{0}}\cdot|\varphi|_{M+N}\cdot(1+|x|)^{-N}\|h\|^{M}\sum_{|\gamma|\le N}|\det(h)|^{1/2}\int_{\mathbb{R}^{d}}\left|\left(\smash{\partial^{\gamma}\widehat{\psi}}\right)(h^{T}\xi')\right|\,{\rm d}\xi'\\
 & =C_{N,M,V,W_{0},R_{0}}\cdot|\varphi|_{M+N}\cdot(1+|x|)^{-N}\|h\|^{M}\cdot|\det(h)|^{-1/2}\sum_{|\gamma|\le N}\int_{\mathbb{R}^{d}}\left|\left(\smash{\partial^{\gamma}\widehat{\psi}}\right)(\eta)\right|\,{\rm d}\eta.\qedhere
\end{align*}

\end{proof}
We next address how wavelet coefficient decay and/or regular directed
points are affected by certain multiplication operators, either in
space or in frequency domain. The first observation pertains to localization
in the space domain and is well-known. For the sake of completeness,
we nevertheless provide a proof. 
\begin{lem}
\label{lem:reg_points_loc}Let $u\in\mathcal{S}'(\mathbb{R}^{d})$
and $(x,\xi)\in\mathbb{R}^{d}\times S^{d-1}$. Let $\varphi\in C_{c}^{\infty}(\mathbb{R}^{d})$
be identically one in some neighborhood of $x$. Then $(x,\xi)$ is
a regular directed point of $u$ iff it is a regular directed point
of $\varphi u$. \end{lem}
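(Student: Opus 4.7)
The proof rests on a single classical fact from microlocal analysis, which I will call the \emph{cone-convolution lemma}: if $g\in\mathcal{S}'(\mathbb{R}^{d})$ has compact support and $\widehat{g}$ satisfies $|\widehat{g}(\zeta)|\le C_{N}(1+|\zeta|)^{-N}$ for all $\zeta\in C(W)$ and all $N\in\mathbb{N}$, then for any $\rho\in C_{c}^{\infty}(\mathbb{R}^{d})$ and any open $W'\subset S^{d-1}$ with $\overline{W'}\subset W$, the convolution $\widehat{\rho g}=\widehat{\rho}\ast\widehat{g}$ again decays rapidly on $C(W')$. This is established by a standard splitting argument: choose $\varepsilon>0$ so small that $\eta\in C(W')$ and $|\eta-\zeta|\le\varepsilon|\eta|$ together force $\zeta\in C(W)$ (which is possible because the angle between $\eta$ and $\zeta$ can be made arbitrarily small at the cost of shrinking $\varepsilon$), and decompose $(\widehat{\rho}\ast\widehat{g})(\eta)$ into the near region $|\eta-\zeta|\le\varepsilon|\eta|$ and its complement. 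On the near region the rapid decay of $\widehat{g}$ combined with $\widehat{\rho}\in L^{1}(\mathbb{R}^{d})$ yields arbitrary polynomial decay in $|\eta|$. On the far region one uses the Schwartz estimate $|\widehat{\rho}(\eta-\zeta)|\le C_{K}(1+|\eta-\zeta|)^{-K}$ together with the global polynomial bound $|\widehat{g}(\zeta)|\le C(1+|\zeta|)^{M}$ (valid since any compactly supported distribution has finite order) and the inequality $(1+|\zeta|)\le(1+|\eta-\zeta|)(1+|\eta|)$, so that the factor $|\eta-\zeta|\ge\varepsilon|\eta|$ converts the remaining $(1+|\eta-\zeta|)$-powers into decay in $|\eta|$ while leaving an integrable tail.

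Assuming the cone-convolution lemma, both directions of the equivalence follow from the same cutoff manipulation. For $(\Rightarrow)$, let $\varphi_{0}\in C_{c}^{\infty}(\mathbb{R}^{d})$ and a $\xi$-neighborhood $W\subset S^{d-1}$ witness that $(x,\xi)$ is a regular directed point of $u$. Since $\{\varphi_{0}=1\}\cap\{\varphi=1\}$ is open and contains $x$, we may select $\rho\in C_{c}^{\infty}(\mathbb{R}^{d})$ identically $1$ in a neighborhood of $x$ with support inside this intersection. Then $\rho\cdot(\varphi u)=\rho u=\rho\cdot(\varphi_{0}u)$, so
\[
\widehat{\rho\cdot(\varphi u)}=\widehat{\rho(\varphi_{0}u)}=\widehat{\rho}\ast\widehat{\varphi_{0}u}\,,
\]
which by the cone-convolution lemma decays rapidly on every $C(W')$ with $\overline{W'}\subset W$. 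Choosing $W'$ to be a sufficiently small $\xi$-neighborhood exhibits $(x,\xi)$ as a regular directed point of $\varphi u$.

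The converse $(\Leftarrow)$ is symmetric. Starting from $\varphi_{1}\in C_{c}^{\infty}(\mathbb{R}^{d})$ identically $1$ near $x$ and a $\xi$-neighborhood $W$ witnessing regularity of $\varphi u$, pick $\rho\in C_{c}^{\infty}(\mathbb{R}^{d})$ identically $1$ near $x$ with $\mathrm{supp}(\rho)\subset\{\varphi=1\}\cap\{\varphi_{1}=1\}$. Then $\rho u=\rho\cdot(\varphi u)=\rho\cdot(\varphi_{1}\varphi u)$, and applying the cone-convolution lemma to the compactly supported distribution $g=\varphi_{1}\varphi u$ transfers the rapid decay to $\widehat{\rho u}$ on a slightly smaller cone, establishing regularity of $u$ at $(x,\xi)$.

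The only step requiring genuine work is the cone-convolution estimate itself, and the main obstacle there is correctly coupling the decay exponents on the far region so that the polynomial growth of $\widehat{g}$ is absorbed while leaving an integrable remainder. The remainder of the argument is routine bookkeeping with nested cutoffs, made trivial by the fact that $\{\varphi=1\}$ is an open neighborhood of $x$, so any required shrinking of cutoff supports is always possible.
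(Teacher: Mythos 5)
Your proposal is correct and its core — the near/far splitting of the convolution $\widehat{\rho}\ast\widehat{g}$, using rapid decay on a slightly larger cone in the near region and the Schwartz decay of $\widehat{\rho}$ against the global polynomial bound on $\widehat{g}$ in the far region — is exactly the argument the paper uses for the implication ``$u$ regular $\Rightarrow$ $\varphi u$ regular''. The only (harmless) difference is that you also route the converse through this convolution lemma, whereas the paper disposes of that direction in one line by taking the product $\psi\varphi$ of the two cutoffs as the new cutoff for $u$; also note that $\left\{\varphi_{0}=1\right\}$ is closed rather than open, so you should intersect the open neighborhoods on which the cutoffs are identically one, which is what you clearly intend.
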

\begin{proof}
``$\Leftarrow$'': If $\varphi\equiv1$ on $B_{\varepsilon}\left(x_{0}\right)$
and $\left(x_{0},\xi_{0}\right)$ is a regular directed point of $\varphi u$,
then there is some function $\psi\in C_{c}^{\infty}\left(\mathbb{R}^{d}\right)$
with $\psi\equiv1$ on $B_{\delta}\left(x_{0}\right)$ for some $\delta>0$
such that 
\[
\left|\mathcal{F}\left(\psi\cdot\varphi u\right)\left(\xi\right)\right|\leq C_{N}\cdot\left(1+\left|\xi\right|\right)^{-N}
\]
holds for all $N\in\mathbb{N}$ and all $\xi\in C\left(W\right)$
for some (fixed) $\xi_{0}$-neighborhood $W\subset S^{d-1}$. Because
of $\psi\varphi\in C_{c}^{\infty}\left(\mathbb{R}^{d}\right)$ with
$\psi\varphi\equiv1$ on $B_{\min\left\{ \varepsilon,\delta\right\} }\left(x_{0}\right)$,
this means that $\left(x_{0},\xi_{0}\right)$ is a regular directed
point of $u$.

``$\Rightarrow$'': The following is loosely based on the proof
of \cite[Part B, Lemma 1.1.1]{PseudoDifferentialAndNashMoser}. Let
$\left(x_{0},\xi_{0}\right)$ be a regular directed point of $u$.
We will show the more general claim that $\left(x_{0},\xi_{0}\right)$
is a regular directed point of $\varphi u$ for any $\varphi\in\mathcal{S}\left(\mathbb{R}^{d}\right)$.
By definition of a regular directed point, there is $\psi\in C_{c}^{\infty}\left(\mathbb{R}^{d}\right)$
with $\psi\equiv1$ on a neighborhood of $x_{0}$ and a $\xi_{0}$-neighborhood
$W\subset S^{d-1}$ such that 
\begin{equation}
\left|\widehat{\psi u}\left(\xi\right)\right|\leq C_{N}\cdot\left(1+\left|\xi\right|\right)^{-N}\label{eq:SpaceLocalizationAssumption}
\end{equation}
holds for all $\xi\in C\left(W\right)$ for all $N\in\mathbb{N}$.

By definition of the relative topology, we have $B_{\delta}\left(\xi_{0}\right)\cap S^{d-1}\subset W$
for some $\delta\in\left(0,1\right)$. Let $c:=\delta/8<\frac{1}{2}$.
This implies $1-c>1/2$ and hence 
\[
\frac{2c}{1-c}\leq4c=\frac{\delta}{2}.
\]
We now show 
\begin{equation}
\forall\xi\in C\left(B_{\delta/2}\left(\xi_{0}\right)\right)\,\forall\eta\in\mathbb{R}^{d}\text{ with }\left|\eta\right|<c\left|\xi\right|:\qquad\xi-\eta\in C\left(W\right).\label{eq:SpaceLocalizationFundamentalInequality}
\end{equation}
To this end, first observe 
\begin{equation}
\left|\xi-\eta\right|\geq\left|\xi\right|-\left|\eta\right|\geq\left(1-c\right)\left|\xi\right|\geq\left|\xi\right|/2>0\label{eq:SpaceLocalizationXiMinusEtaIsLarge}
\end{equation}
and $\left|\left|\xi\right|-\left|\xi-\eta\right|\right|\leq\left|\eta\right|<c\left|\xi\right|$,
which implies 
\begin{align*}
\left|\frac{\xi-\eta}{\left|\xi-\eta\right|}-\xi_{0}\right| & \leq\left|\frac{\xi-\eta}{\left|\xi-\eta\right|}-\frac{\xi}{\left|\xi\right|}\right|+\left|\frac{\xi}{\left|\xi\right|}-\xi_{0}\right|\\
 & <\frac{\left|\left(\left|\xi\right|-\left|\xi-\eta\right|\right)\xi-\left|\xi\right|\eta\right|}{\left|\xi-\eta\right|\cdot\left|\xi\right|}+\frac{\delta}{2}\\
 & \leq\frac{c\left|\xi\right|^{2}+c\left|\xi\right|^{2}}{\left(1-c\right)\cdot\left|\xi\right|^{2}}+\frac{\delta}{2}\\
 & \leq\frac{\delta}{2}+\frac{\delta}{2}=\delta
\end{align*}
and hence $\xi-\eta\in\left|\xi-\eta\right|\cdot\left(B_{\delta}\left(\xi_{0}\right)\cap S^{d-1}\right)\subset C\left(W\right)$.

Now, set $\varrho:=\mathcal{F}^{-1}\varphi$. Recall from \cite[Theorem 7.23]{RudinFA}
that the Fourier transform of a compactly supported (tempered) distribution
$f$ is given by (integration against) the smooth, polynomially bounded
function $\widehat{f}\left(\xi\right)=f\left(e^{-2\pi i\left\langle \cdot,\xi\right\rangle }\right)$.
This implies 
\begin{align*}
\left(\mathcal{F}\left(\psi\cdot\varphi u\right)\right)\left(\xi\right) & =\left(\psi\cdot\varphi u\right)\left(e^{-2\pi i\left\langle \cdot,\xi\right\rangle }\right)\\
 & =\left(\psi u\right)\left(\varphi\cdot e^{-2\pi i\left\langle \cdot,\xi\right\rangle }\right)\\
 & =\left(\psi u\right)\left(\widehat{L_{\xi}\varrho}\right)\\
 & =\widehat{\psi u}\left(L_{\xi}\varrho\right)\\
 & =\int_{\mathbb{R}^{d}}\widehat{\psi u}\left(\xi-\eta\right)\cdot\varrho\left(-\eta\right)\,{\rm d}\eta.
\end{align*}
We now split the domain of the last integral into the parts $\left|\eta\right|<c\left|\xi\right|$
and $\left|\eta\right|\geq c\left|\xi\right|$. For the first part,
we use equations \eqref{eq:SpaceLocalizationAssumption}, \eqref{eq:SpaceLocalizationFundamentalInequality}
and \eqref{eq:SpaceLocalizationXiMinusEtaIsLarge} to estimate, for
each $\xi\in C\left(B_{\delta/2}\left(\xi_{0}\right)\right)$, 
\begin{align*}
\left|\int_{\left|\eta\right|<c\left|\xi\right|}\widehat{\psi u}\left(\xi-\eta\right)\cdot\varrho\left(-\eta\right)\,{\rm d}\eta\right| & \leq C_{N}\cdot\int_{\left|\eta\right|<c\left|\xi\right|}\left(1+\left|\xi-\eta\right|\right)^{-N}\cdot\left|\varrho\left(-\eta\right)\right|\,{\rm d}\eta\\
 & \leq C_{N}\cdot\left(1+\frac{\left|\xi\right|}{2}\right)^{-N}\cdot\left\Vert \varrho\right\Vert _{1}\\
 & \leq2^{N}C_{N}\cdot\left\Vert \varrho\right\Vert _{1}\cdot\left(1+\left|\xi\right|\right)^{-N}.
\end{align*}
For the second part, observe that \cite[Theorem 7.23]{RudinFA} shows
that $\widehat{\psi u}$ is a polynomially bounded function, i.e.
$\vphantom{\left|\widehat{\psi u}\right|}\left|\smash{\widehat{\psi u}}\left(\xi-\eta\right)\right|\leq C\cdot\left(1+\left|\xi-\eta\right|\right)^{M}$
for suitable $M\in\mathbb{N}_{0}$ and $C>0$ for all $\xi,\eta\in\mathbb{R}^{d}$.
Together with 
\[
1+\left|\xi-\eta\right|\leq1+\left|\xi\right|+\left|\eta\right|\leq1+\left(1+c^{-1}\right)\left|\eta\right|\leq\left[1+\left(1+c^{-1}\right)\right]\left(1+\left|\eta\right|\right)=:C_{c}\cdot\left(1+\left|\eta\right|\right),
\]
this leads to 
\begin{align*}
\left|\int_{\left|\eta\right|\geq c\left|\xi\right|}\widehat{\psi u}\left(\xi-\eta\right)\cdot\varrho\left(-\eta\right)\,{\rm d}\eta\right| & \leq C\left|\varrho\right|_{M+K+d+1}\int_{\left|\eta\right|\geq c\left|\xi\right|}\left(1+\left|\xi-\eta\right|\right)^{M}\cdot\left(1+\left|\eta\right|\right)^{-M-K-d-1}\,{\rm d}\eta\\
 & \leq C\cdot C_{c}^{M}\cdot\left|\varrho\right|_{M+K+d+1}\cdot\left(1+c\left|\xi\right|\right)^{-K}\cdot\int_{\left|\eta\right|\geq c\left|\xi\right|}\left(1+\left|\eta\right|\right)^{-d-1}\,{\rm d}\eta\\
 & \leq C'\cdot\left(1+\left|\xi\right|\right)^{-K}
\end{align*}
for each $K\in\mathbb{N}_{0}$, where $C'$ is of the form $C'=C'\left(\varrho,C,K,M,d,c\right)=C'\left(\delta,d,\varphi,\psi,u,K\right)$.
This completes the proof. 
\end{proof}
The following Fourier localization statement for regular directed
points is probably folklore, but since it is central to our argument,
we include a proof.
\begin{lem}
\label{lem:Four_loc}Let $\zeta:\mathbb{R}^{d}\to\mathbb{C}$ denote
a $C^{\infty}$-function with polynomially bounded partial derivatives.
Then, for $u\in\mathcal{S}'(\mathbb{R}^{d})$, the Fourier localization
$P_{\zeta}u=\mathcal{F}^{-1}\left(\zeta\cdot\widehat{u}\right)$,
i.e. 
\[
P_{\zeta}u:\,\mathcal{S}(\mathbb{R}^{d})\to\mathbb{C},\,\varphi\mapsto\widehat{u}\left(\zeta\cdot\mathcal{F}^{-1}\varphi\right)
\]
is a well-defined tempered distribution. If 
\[
\zeta|_{C(W,R)}\equiv1
\]
holds for some $\xi_{0}\in S^{d-1}$, some $R>0$ and some $\xi_{0}$-neighborhood
$W\subset S^{d-1}$ and if $\left(x_{0},\xi_{0}\right)$ is a regular
directed point of $P_{\zeta}u$, then $\left(x_{0},\xi_{0}\right)$
is also a regular directed point of $u$.\end{lem}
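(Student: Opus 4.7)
The proof naturally splits into three parts.

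First, $P_\zeta u$ is a well-defined tempered distribution: since $\zeta\in C^\infty(\mathbb{R}^d)$ has polynomially bounded derivatives, multiplication by $\zeta$ is continuous on $\mathcal{S}(\mathbb{R}^d)$, and $\mathcal{F}^{-1}$ is an isomorphism of $\mathcal{S}$; composing with $\widehat{u}\in\mathcal{S}'$ therefore yields a continuous linear functional on $\mathcal{S}$.

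Second, I decompose $u=P_\zeta u+w$ with $w:=\mathcal{F}^{-1}((1-\zeta)\widehat{u})$. For any $\varphi\in C_c^\infty(\mathbb{R}^d)$ identically one in some neighborhood of $x_0$,
\[
\widehat{\varphi u}(\xi')=\widehat{\varphi P_\zeta u}(\xi')+\widehat{\varphi w}(\xi').
\]
The hypothesis that $(x_0,\xi_0)$ is a regular directed point of $P_\zeta u$, combined with the ``$\Rightarrow$'' direction of Lemma \ref{lem:reg_points_loc} (whose proof applies to any $\varphi\in\mathcal{S}$), yields rapid decay of the first summand on $C(W_1)$ for some $\xi_0$-neighborhood $W_1\subset S^{d-1}$. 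It therefore suffices to prove that $\widehat{\varphi w}$ decays rapidly on $C(W_*)$ for a suitable $\xi_0$-neighborhood $W_*\subset S^{d-1}$ with $\overline{W_*}\subset W$; intersecting $W_*\cap W_1$ then supplies the desired cone for $u$.

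Third, by the same modulation/Fourier computation used in the proof of Lemma \ref{lem:reg_points_loc}, the compactly supported distribution $\varphi w$ satisfies
\[
\widehat{\varphi w}(\xi')=w\bigl(\varphi\cdot e^{-2\pi i\langle\cdot,\xi'\rangle}\bigr)=\widehat{u}\bigl(\eta\mapsto(1-\zeta(\eta))\,\widehat{\varphi}(\xi'-\eta)\bigr).
\]
Since $\widehat{u}\in\mathcal{S}'$, there exist $N_u\in\mathbb{N}$ and $C>0$ with $|\widehat{u}(\psi)|\le C\,|\psi|_{N_u}$, so it remains to bound the Schwartz seminorm $|\eta\mapsto(1-\zeta(\eta))\widehat{\varphi}(\xi'-\eta)|_{N_u}$ by $C_N(1+|\xi'|)^{-N}$ on $C(W_*,R')$ for some $R'\ge 2R+1$. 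The structural point is that $\zeta\equiv 1$ on the \emph{open} set $C(W,R)$, so every derivative $\partial^\beta(1-\zeta)$ also vanishes there; after applying Leibniz to $\partial_\eta^\alpha[(1-\zeta(\eta))\widehat{\varphi}(\xi'-\eta)]$, each term is supported in $\mathbb{R}^d\setminus C(W,R)$.

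The geometric separation estimate drives everything: for $W_*\subset S^{d-1}$ with $\overline{W_*}\subset W$ and $\xi_0\in W_*$, for all $\xi'\in C(W_*,R')$ with $R'\ge 2R+1$, and for all $\eta\in\mathbb{R}^d\setminus C(W,R)$, one has
\[
1+|\xi'-\eta|\ge c\,(1+|\xi'|)\quad\text{and}\quad 1+|\xi'-\eta|\ge c\,(1+|\eta|),
\]
for some $c>0$ depending only on $R$ and $\mathrm{dist}(\overline{W_*},S^{d-1}\setminus W)>0$. This is verified by splitting into $|\eta|\le R$ (so $|\xi'|$ dominates) and $|\eta|>R$ with $\eta/|\eta|\notin W$ (so the angle between $\xi'$ and $\eta$ is bounded below). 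Writing $(1+|\xi'-\eta|)^{-M}=(1+|\xi'-\eta|)^{-M_1}(1+|\xi'-\eta|)^{-M_2}$ and using the first factor to absorb the polynomial growth $(1+|\eta|)^{N_u+|\beta|}$ arising from $(1+|\eta|)^{N_u}\,|\partial^\beta(1-\zeta)(\eta)|$ (requiring $M_1\ge N_u+\max_{|\beta|\le N_u}|\beta|$), and the second to produce the factor $(1+|\xi'|)^{-N}$ (requiring $M_2\ge N$), completes the Schwartz-seminorm bound after taking $M=M_1+M_2$ sufficiently large. I expect this geometric/seminorm bookkeeping to be the main obstacle; the remaining manipulations are routine for tempered distributions.
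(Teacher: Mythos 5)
Your argument is essentially the paper's: decompose $u=P_{\zeta}u+P_{1-\zeta}u$, write $\widehat{\varphi\, P_{1-\zeta}u}(\xi')=\widehat{u}\bigl(\eta\mapsto(1-\zeta(\eta))\,\widehat{\varphi}(\xi'-\eta)\bigr)$, and bound the order-$N_{u}$ Schwartz seminorm via Leibniz, the vanishing of all derivatives of $1-\zeta$ on the open cone $C(W,R)$, and the two-sided separation estimate $1+|\xi'-\eta|\gtrsim 1+|\xi'|$ and $1+|\xi'-\eta|\gtrsim1+|\eta|$ for $\xi'$ in a smaller, higher-truncated cone and $\eta\notin C(W,R)$ --- this is exactly the content of \eqref{eq:FourierSideLocalizationGeometricEstimate} and the subsequent bookkeeping in the paper, and your case split and exponent accounting are correct (up to replacing $\max_{|\beta|\le N_u}|\beta|$ by whatever polynomial degree the derivatives of $\zeta$ actually have, which is harmless since $M$ is free).

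One quantifier should be repaired: it is \emph{not} true that $\widehat{\varphi\, P_{\zeta}u}$ decays rapidly in a cone around $\xi_{0}$ for \emph{every} $\varphi\in C_{c}^{\infty}$ identically one near $x_{0}$ (such a $\varphi$ may be identically one near other singularities of $P_{\zeta}u$ seen in direction $\xi_{0}$; think of $P_\zeta u$ containing a translate of $\delta_{x_1}$), and Lemma \ref{lem:reg_points_loc} only yields decay of $\widehat{\psi\varphi\, P_{\zeta}u}$ for some further cutoff $\psi$, not of $\widehat{\varphi\, P_{\zeta}u}$ itself. The fix is immediate and is what the paper does: take $\varphi$ to be the particular cutoff furnished by the definition of a regular directed point of $P_{\zeta}u$; since your estimate for the $P_{1-\zeta}u$ summand holds for arbitrary $\varphi\in C_{c}^{\infty}$, nothing else in the argument changes.
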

\begin{rem*}
One can show (using a similar proof) that the reverse implication
is also valid, but we will not need this in the following.\end{rem*}
\begin{proof}
Well-definedness of $P_{\zeta}u$ follows from the fact that $\varphi\mapsto\varphi\cdot\zeta$
is a continuous linear operator on $\mathcal{S}(\mathbb{R}^{d})$
(because $\zeta$ has polynomially bounded derivatives) and because
the Fourier transform is a homeomorphism $\mathcal{F}:\mathcal{S}\left(\mathbb{R}^{d}\right)\to\mathcal{S}\left(\mathbb{R}^{d}\right)$.

Let $R'>\max\left\{ 1,R\right\} $ be arbitrary and define $W_{\delta}:=B_{\delta}\left(\xi_{0}\right)\cap S^{d-1}$
for $\delta>0$. The main geometric fact on which the proof is based
is the following estimate, valid for all $\delta\in\left(0,1\right)$,
\begin{equation}
\forall\xi\in C\left(W_{\delta/2},\frac{4}{3}R'\right)\,\forall y\in\mathbb{R}^{d}\setminus C\left(W_{\delta},R'\right):\qquad\left|y-\xi\right|\geq\frac{\delta}{4}\cdot\left|\xi\right|.\label{eq:FourierSideLocalizationGeometricEstimate}
\end{equation}
For the proof of this inequality, we distringuish two cases:
\begin{casenv}
\item We have $\left|\left|\xi\right|-\left|y\right|\right|>\frac{\delta}{4}\left|\xi\right|$.
In this case, simply note 
\[
\left|y-\xi\right|\geq\left|\left|y\right|-\left|\xi\right|\right|>\frac{\delta}{4}\cdot\left|\xi\right|.
\]

\item We have $\left|\left|\xi\right|-\left|y\right|\right|\leq\frac{\delta}{4}\left|\xi\right|$.
Using $\delta\leq1$ and $\left|\xi\right|>\frac{4}{3}R'$, this yields
\[
\left|y\right|\geq\left|\xi\right|-\frac{\delta}{4}\left|\xi\right|\geq\frac{3}{4}\left|\xi\right|>R'.
\]
In particular, $\left|y\right|>0$. In case of $\frac{y}{\left|y\right|}\in B_{\delta}\left(\xi_{0}\right)$,
this would imply $y=\left|y\right|\frac{y}{\left|y\right|}\in C\left(W_{\delta}\right)$
and hence $y\in C\left(W_{\delta},R'\right)$ in contradiction to
the assumptions of equation \eqref{eq:FourierSideLocalizationGeometricEstimate}.
Hence, $\frac{y}{\left|y\right|}\notin B_{\delta}\left(\xi_{0}\right)$.

But $\xi\in C\left(W_{\delta/2},\frac{4}{3}R'\right)\subset C\left(W_{\delta/2}\right)$,
which yields some $\xi'\in W_{\delta/2}=B_{\delta/2}\left(\xi_{0}\right)\cap S^{d-1}$
and $r>0$ with $\xi=r\cdot\xi'$. This implies $r=\left|\xi\right|$
and hence $\frac{\xi}{\left|\xi\right|}=\xi'\in B_{\delta/2}\left(\xi_{0}\right)$.
Together, we arrive at 
\[
\left|\frac{y}{\left|y\right|}-\frac{\xi}{\left|\xi\right|}\right|>\frac{\delta}{2}
\]
and thus 
\begin{align*}
\left|y-\xi\right| & =\left|\xi\right|\cdot\left|\frac{\xi}{\left|\xi\right|}-\frac{y}{\left|\xi\right|}\right|\\
 & \geq\left|\xi\right|\cdot\left|\frac{\xi}{\left|\xi\right|}-\frac{y}{\left|y\right|}\right|-\left|\xi\right|\cdot\left|\frac{y}{\left|y\right|}-\frac{y}{\left|\xi\right|}\right|\\
 & >\frac{\delta}{2}\left|\xi\right|-\left|\frac{y}{\left|y\right|}\cdot\left[\left|\xi\right|-\left|y\right|\right]\right|\\
 & =\frac{\delta}{2}\left|\xi\right|-\left|\left|\xi\right|-\left|y\right|\right|\geq\frac{\delta}{4}\left|\xi\right|,
\end{align*}
where we used the assumption $\left|\left|\xi\right|-\left|y\right|\right|\leq\frac{\delta}{4}\left|\xi\right|$
of this case in the last step.

\end{casenv}
Let $\varphi\in C_{c}^{\infty}\left(\mathbb{R}^{d}\right)$ be arbitrary
and set $\psi_{\varphi}:=\mathcal{F}^{-1}\varphi$. Using the formula
$\widehat{f}\left(\xi\right)=f\left(e^{-2\pi i\left\langle \xi,\cdot\right\rangle }\right)$
for the Fourier transform of a compactly supported (tempered) distribution
$f$ -- as given in \cite[Theorem 7.23]{RudinFA} -- we calculate
\begin{align*}
g_{\varphi}\left(\xi\right):=\left(\mathcal{F}\left[\varphi\cdot P_{1-\zeta}u\right]\right)\left(\xi\right) & =\left(P_{1-\zeta}u\right)\left(\varphi\cdot e^{-2\pi i\left\langle \xi,\cdot\right\rangle }\right)\\
 & =\widehat{u}\left(\left(1-\zeta\right)\cdot\mathcal{F}^{-1}\left(\varphi\cdot e^{-2\pi i\left\langle \xi,\cdot\right\rangle }\right)\right)\\
 & =\widehat{u}\left(\left(1-\zeta\right)\cdot L_{\xi}\left(\mathcal{F}^{-1}\varphi\right)\right)\\
 & =\widehat{u}\left(\left(1-\zeta\right)\cdot L_{\xi}\psi_{\varphi}\right),
\end{align*}
where $L_{\xi}\psi_{\varphi}$ is the left-translate of $\psi_{\varphi}$,
defined by $\left(L_{\xi}\psi_{\varphi}\right)\left(\eta\right)=\psi_{\varphi}\left(\eta-\xi\right)$.

By definition of the relative topology, $W_{\delta}\subset W$ holds
for some $\delta>0$. We want to show that $g_{\varphi}$ has rapid
decay on $C\left(W_{\delta/2},\frac{4}{3}R'\right)$. To this end,
first note that 
\[
\left|\widehat{u}\left(f\right)\right|\leq C_{u}\cdot\left|f\right|_{K}
\]
holds for suitable $C_{u}>0$ and $K=K\left(u\right)\in\mathbb{N}$
for all $f\in\mathcal{S}\left(\mathbb{R}^{d}\right)$, because $\widehat{u}$
is a tempered distribution. Furthermore, $\zeta$ has polynomially
bounded derivatives of all orders, so that the same is true of $1-\zeta$.
Also, $1-\zeta$ vanishes on $C\left(W,R\right)\supset C\left(W_{\delta},R'\right)$.

Let $\xi\in C\left(W_{\delta/2},\frac{4}{3}R'\right)$. Recall the
estimate $\left|y-\xi\right|\geq\frac{\delta}{4}\left|\xi\right|$
from equation \eqref{eq:FourierSideLocalizationGeometricEstimate}
which is valid for all $y\in\mathbb{R}^{d}\setminus C\left(W_{\delta},R'\right)$.
This also implies 
\[
1+\left|y\right|\leq1+\left|y-\xi\right|+\left|\xi\right|\leq1+\left(1+\frac{4}{\delta}\right)\left|y-\xi\right|\leq C_{\delta}\cdot\left(1+\left|y-\xi\right|\right).
\]
Together, we derive 
\begin{align*}
\left(1+\left|y-\xi\right|\right)^{-\left(N+K+L\right)} & =\left(1+\left|y-\xi\right|\right)^{-\left(K+L\right)}\cdot\left(1+\left|y-\xi\right|\right)^{-N}\\
 & \leq C_{\delta}^{K+L}\cdot\left(1+\left|y\right|\right)^{-\left(K+L\right)}\cdot\left(1+\frac{\delta}{4}\left|\xi\right|\right)^{-N}\\
 & \leq C_{\delta}^{K+L}\cdot\left(\min\left\{ 1,\frac{\delta}{4}\right\} \right)^{-N}\cdot\left(1+\left|y\right|\right)^{-\left(K+L\right)}\cdot\left(1+\left|\xi\right|\right)^{-N}\\
 & =C_{\delta,N,K,L}\cdot\left(1+\left|y\right|\right)^{-\left(K+L\right)}\cdot\left(1+\left|\xi\right|\right)^{-N}.
\end{align*}
Via Leibniz' formula, we arrive at 
\begin{align*}
 & \hphantom{=\,}\left(1+\left|y\right|\right)^{K}\cdot\left|\left[\partial^{\alpha}\left(\left(1-\zeta\right)\cdot L_{\xi}\psi_{\varphi}\right)\right]\left(y\right)\right|\\
 & \leq\left(1+\left|y\right|\right)^{K}\cdot\sum_{\beta\leq\alpha}\binom{\alpha}{\beta}\cdot\left|\left(\partial^{\beta}\left(1-\zeta\right)\right)\left(y\right)\right|\cdot\left|\left(\partial^{\alpha-\beta}\psi_{\varphi}\right)\left(y-\xi\right)\right|\\
 & \leq C\cdot\left(1+\left|y\right|\right)^{K}\cdot\sum_{\beta\leq\alpha}\binom{\alpha}{\beta}\cdot\chi_{\mathbb{R}^{d}\setminus C\left(W_{\delta},R'\right)}\left(y\right)\cdot\left(1+\left|y\right|\right)^{L}\cdot\left|\psi_{\varphi}\right|_{N+K+L}\cdot\left(1+\left|y-\xi\right|\right)^{-\left(N+K+L\right)}\\
 & \leq C\cdot\left|\psi_{\varphi}\right|_{N+K+L}C_{\delta,N,K,L}\cdot\left[\sum_{\beta\leq\alpha}\binom{\alpha}{\beta}\right]\cdot\left(1+\left|\xi\right|\right)^{-N}
\end{align*}
for all $\alpha\in\mathbb{N}_{0}^{d}$ with $\left|\alpha\right|\leq K$
and suitable constants $C=C\left(\zeta,K\right)>0$ and $L=L\left(\zeta,K\right)\in\mathbb{N}$.

All in all, this establishes 
\[
\left|g_{\varphi}\left(\xi\right)\right|=\left|\widehat{u}\left(\left(1-\zeta\right)\cdot L_{\xi}\psi_{\varphi}\right)\right|\leq C_{u}\cdot\left|\left(1-\zeta\right)\cdot L_{\xi}\psi_{\varphi}\right|_{K}\leq C_{u,\delta,\zeta,\varphi,N}\cdot\left(1+\left|\xi\right|\right)^{-N}
\]
for all $N\in\mathbb{N}$ and all $\xi\in C\left(W_{\delta/2},\frac{4}{3}R'\right)$.

Now assume that $(x_{0},\xi_{0})$ is a regular directed point of
$P_{\zeta}u$. Pick $\varphi\in C_{c}^{\infty}(\mathbb{R}^{d})$ identically
one in a neighborhood of $x_{0}$, as well as a $\xi_{0}$-neighborhood
$W'\subset S^{d-1}$ and some $R''>0$ such that 
\[
\left|\left(\varphi\cdot P_{\zeta}u\right)^{\wedge}(\xi)\right|\le C_{N}\cdot(1+|\xi|)^{-N}
\]
holds for all $\xi\in C(W',R'')$.

As an easy consequence of the definitions, $u=P_{\zeta}u+P_{1-\zeta}u$
and hence 
\[
\mathcal{F}\left(\varphi\cdot u\right)=\mathcal{F}\left(\varphi\cdot P_{\zeta}u\right)+\mathcal{F}\left(\varphi\cdot P_{1-\zeta}u\right).
\]
But $\mathcal{F}\left(\varphi\cdot P_{\zeta}u\right)$ is of rapid
decay on $C\left(W',R''\right)$, whereas rapid decay of $g_{\varphi}=\mathcal{F}\left(\varphi\cdot P_{1-\zeta}u\right)$
on $C\left(W_{\delta/2},\frac{4}{3}R'\right)$ was established above.
Hence, $\mathcal{F}\left(\varphi\cdot u\right)$ decays rapidly on
\[
C\left(W'\cap W_{\delta/2},\max\left\{ R'',\frac{4}{3}R'\right\} \right),
\]
so that $\left(x_{0},\xi_{0}\right)$ is a regular directed point
of $u$.
\end{proof}
Since we aim at characterizing regular directed points using wavelet
transform decay, the following result is a natural counterpart to
Lemma \ref{lem:reg_points_loc}. 
\begin{lem}
\label{lem:wavelet_local}Let $u\in\mathcal{S}'(\mathbb{R}^{d})$,
$\psi\in\mathcal{S}(\mathbb{R}^{d})$ and $\varphi\in C_{c}^{\infty}(\mathbb{R}^{d})$,
with $\varphi|_{B_{\varepsilon_{1}}(x)}\equiv1$ for some $x\in\mathbb{R}^{d}$
and $\varepsilon_{1}>0$. Assume that the dual action is $V$-microlocally
admissible in direction $\xi$, and that ${\rm supp}(\widehat{\psi})\subset V$
for some $\emptyset\neq V\Subset\mathcal{O}$.

Then there exist constants $C_{N}>0$ ($N\in\mathbb{N}$), such that
the estimate 
\[
\left|W_{\psi}u(y,h)-\left(W_{\psi}(\varphi u)\right)(y,h)\right|\le C_{N}\|h\|^{N}
\]
holds for all $y\in B_{\varepsilon_{1}/2}(x)$ and all $h\in K_{o}(W_{0},V,R_{0})$,
as soon as $R_{0}>0$ and the $\xi$-neighborhood $W_{0}\subset S^{d-1}$
satisfy part \ref{enu:NormOfInverseEstimateOnKo} of the definition
of $V$-microlocal admissibility (Definition \ref{defn:micro_regular}). \end{lem}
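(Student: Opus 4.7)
The plan is to write the difference as a single pairing and then extract decay in $\|h\|$ from the Schwartz decay of $\psi$, using the geometric fact that the relevant integrand is supported away from $y$. Explicitly, by linearity we have
\[
W_{\psi}u(y,h) - W_{\psi}(\varphi u)(y,h) = \langle (1-\varphi)\,u \mid \pi(y,h)\psi\rangle,
\]
where $(1-\varphi)\,u \in \mathcal{S}'(\mathbb{R}^{d})$ because $1-\varphi$ is smooth with polynomially bounded derivatives. Since $u$ is tempered, there exist $K = K(u) \in \mathbb{N}$ and $C_{u} > 0$ with $|u(\eta)| \le C_{u}\cdot|\eta|_{K}$ for all $\eta \in \mathcal{S}(\mathbb{R}^{d})$. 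Applied to $\eta = (1-\varphi)\cdot\overline{\pi(y,h)\psi}$, this reduces the task to bounding $|(1-\varphi)\cdot\overline{\pi(y,h)\psi}|_{K}$ by an arbitrarily high power of $\|h\|$.

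The key geometric point is that for $y \in B_{\varepsilon_{1}/2}(x)$ the factor $1-\varphi$ vanishes on $B_{\varepsilon_{1}}(x) \supset B_{\varepsilon_{1}/2}(y)$, so the product $(1-\varphi)\cdot\overline{\pi(y,h)\psi}$ is supported on the set $\{z : |z-y| \ge \varepsilon_{1}/2\}$. Using the chain rule (each derivative of $z\mapsto\psi(h^{-1}(z-y))$ produces factors that are polynomial of degree $|\beta|$ in the entries of $h^{-1}$) together with Leibniz's rule and the boundedness of all derivatives of $1-\varphi$, we obtain, for every $|\alpha|\le K$ and every $M\in\mathbb{N}$,
\[
\bigl|\partial^{\alpha}[(1-\varphi)\cdot\overline{\pi(y,h)\psi}](z)\bigr| \le C_{K,M}\,|\psi|_{M}\,|\det h|^{-1/2}(1+\|h^{-1}\|)^{K}(1+|h^{-1}(z-y)|)^{-M}
\]
on the support. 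The elementary bound $|z-y| = |h\cdot h^{-1}(z-y)|\le\|h\|\cdot|h^{-1}(z-y)|$ rearranges to $1+|h^{-1}(z-y)|\ge(\|h\|+|z-y|)/\|h\|$, hence
\[
(1+|h^{-1}(z-y)|)^{-M} \le \|h\|^{M}\,(\|h\|+|z-y|)^{-M}.
\]
Since $|z-y|\ge\varepsilon_{1}/2 > 0$ on the support and $1+|z|\le (1+|x|+\varepsilon_{1}/2)(1+|z-y|)$, a routine case distinction ($|z|$ small vs.\ $|z|$ large) shows that $(1+|z|)^{K}(\|h\|+|z-y|)^{-M}$ is uniformly bounded in $z$ and $h$ as soon as $M\ge K$.

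Assembling these bounds gives
\[
|(1-\varphi)\cdot\overline{\pi(y,h)\psi}|_{K} \le C_{K,M,\psi,\varphi,x,\varepsilon_{1}}\cdot|\det h|^{-1/2}(1+\|h^{-1}\|)^{K}\cdot\|h\|^{M}.
\]
To finish, I invoke Lemma \ref{lem:related_to_assumptions}\ref{enu:NormBoundedOnKoAndDeterminantEstimate}: for $h\in K_{o}(W_{0},V,R_{0})$ one has $|\det h|^{-1/2}\le C\,\|h\|^{-d\alpha_{1}/2}$ and $(1+\|h^{-1}\|)^{K}\le C\,\|h\|^{-\alpha_{1}K}$, so that
\[
\bigl|\langle (1-\varphi)u\mid\pi(y,h)\psi\rangle\bigr| \le C'\cdot\|h\|^{M-d\alpha_{1}/2-\alpha_{1}K}.
\]
Given $N\in\mathbb{N}$, choosing $M\ge N+d\alpha_{1}/2+\alpha_{1}K$ produces the claimed inequality. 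The main nuisance in the argument is bookkeeping: correctly coordinating the fixed derivative order $K=K(u)$, the freely chosen Schwartz-norm order $M$, and the negative powers of $\|h\|$ absorbed through Lemma \ref{lem:related_to_assumptions}; once this accounting is set up, there is no genuine obstacle.
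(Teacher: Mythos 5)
Your proposal is correct and follows essentially the same route as the paper's proof: express the difference as $\langle u \mid (1-\varphi)\pi(y,h)\psi\rangle$, bound it by a Schwartz norm of order $K=K(u)$, use Leibniz and the chain rule together with the bound $|z-y|\le\|h\|\cdot|h^{-1}(z-y)|$ and the support restriction $|z-y|\ge\varepsilon_1/2$, and finally absorb the $|\det h|^{-1/2}$ and $(1+\|h^{-1}\|)^K$ factors via Lemma \ref{lem:related_to_assumptions}\ref{enu:NormBoundedOnKoAndDeterminantEstimate} before sending the free decay exponent $M$ to be large. The only cosmetic differences are the rearrangement $(1+|h^{-1}(z-y)|)^{-M}\le\|h\|^{M}(\|h\|+|z-y|)^{-M}$ in place of the paper's $\le\|h\|^{M}|z-y|^{-M}$, and a swap of the letters $K$ and $M$.
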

\begin{proof}
We first employ the standard continuity criterion for tempered distributions
to estimate 
\begin{align*}
\left|W_{\psi}u(y,h)-\left(W_{\psi}(\varphi u)\right)(y,h)\right| & =|\langle u\mid(1-\overline{\varphi})\cdot(\pi(y,h)\psi)\rangle|\\
 & \le C\cdot|(1-\overline{\varphi})\cdot(\pi(y,h)\psi)|_{M},
\end{align*}
for a suitable $C>0$ and $M\in\mathbb{N}$ (depending only on $u\in\mathcal{S}'\left(\mathbb{R}^{d}\right)$).
We use the estimate $\left|z-y\right|=\left|hh^{-1}\left(z-y\right)\right|\leq\left\Vert h\right\Vert \cdot\left|h^{-1}\left(z-y\right)\right|$
to derive 
\[
\left(1+\left|h^{-1}\left(z-y\right)\right|\right)^{-K}\leq\left|h^{-1}\left(z-y\right)\right|^{-K}\leq\left\Vert h\right\Vert ^{K}\cdot\left|z-y\right|^{-K}.
\]
An application of Lemma \ref{lem:related_to_assumptions}\ref{enu:NormBoundedOnKoAndDeterminantEstimate}
shows 
\[
\left|\det\left(h\right)\right|^{-1/2}\leq C_{1}\cdot\left\Vert h\right\Vert ^{-\alpha_{1}d/2}
\]
and 
\[
\left(1+\left\Vert h^{-1}\right\Vert \right)^{M}\leq C_{2}\cdot\left\Vert h\right\Vert ^{-M\alpha_{1}}
\]
for all $h\in K_{o}\left(W_{0},V,R_{0}\right)$, with $C_{2}=C_{2}\left(M,W_{0},R_{0},V\right)>0$
and $\alpha_{1}>0$ as in Definition \ref{defn:micro_regular}.

Using Leibniz' formula, the chain rule and the fact that $1-\overline{\varphi}$
vanishes on $B_{\varepsilon_{1}}(x)$, we derive 
\begin{align*}
 & \hphantom{\leq\,}\left|\left(1-\overline{\varphi}\right)\cdot\left(\pi\left(y,h\right)\psi\right)\right|_{M}\\
 & \le C_{\varphi,M}\cdot\max_{\left|\alpha\right|\leq M}\sup_{z\in\mathbb{R}^{d}\setminus B_{\varepsilon_{1}}\left(x\right)}(1+|z|)^{M}\left|\partial^{\alpha}|_{z'=z}\left(z'\mapsto\left|\det\left(h\right)\right|^{-1/2}\cdot\psi(h^{-1}(z'-y))\right)\right|\\
 & \le C_{\varphi,M}'\cdot\left\Vert h\right\Vert ^{-\alpha_{1}d/2}\cdot\max_{\left|\alpha\right|\leq M}\sup_{z\in\mathbb{R}^{d}\setminus B_{\varepsilon_{1}}\left(x\right)}\left(1+\left|z\right|\right)^{M}\left(1+\left\Vert h^{-1}\right\Vert \right)^{M}\left|\left(\partial^{\alpha}\psi\right)\left(h^{-1}\left(z-y\right)\right)\right|\\
 & \le\left|\psi\right|_{K}C_{\varphi,M,W_{0},R_{0},V}\cdot\left\Vert h\right\Vert ^{-\alpha_{1}\left(M+\frac{d}{2}\right)}\cdot\sup_{z\in\mathbb{R}^{d}\setminus B_{\varepsilon_{1}}\left(x\right)}\left(1+\left|z\right|\right)^{M}\left(1+\left|h^{-1}\left(z-y\right)\right|\right)^{-K}\\
 & \le\left|\psi\right|_{K}C_{\varphi,M,W_{0},R_{0},V}\cdot\|h\|^{K-\alpha_{1}\left(M+\frac{d}{2}\right)}\cdot\sup_{z\in\mathbb{R}^{d}\setminus B_{\varepsilon_{1}}\left(x\right)}(1+|z|)^{M}|z-y|^{-K}
\end{align*}
for all $K\geq M$.

But as soon as $K\ge M$, $z\in\mathbb{R}^{d}\setminus B_{\varepsilon_{1}}\left(x\right)$
and $y\in\overline{B_{\varepsilon_{1}/2}}\left(x\right)$, we have
\[
\left|z-y\right|=\left|\left(z-x\right)+\left(x-y\right)\right|\geq\left|z-x\right|-\left|x-y\right|\geq\left|z-x\right|-\frac{\varepsilon_{1}}{2}\geq\frac{\varepsilon_{1}}{2}.
\]
There are now two cases for $z\in\mathbb{R}^{d}\setminus B_{\varepsilon_{1}}\left(x\right)$.
If $\left|z\right|\geq2\cdot\left(\left|x\right|+\frac{\varepsilon_{1}}{2}\right)\geq\varepsilon_{1}$,
then $\left|\frac{y}{\left|z\right|}\right|\leq\frac{\left|x\right|+\frac{\varepsilon_{1}}{2}}{\left|z\right|}\leq\frac{1}{2}$
and hence
\[
\frac{\left(1+\left|z\right|\right)^{M}}{\left|z-y\right|^{K}}=\frac{\left(\frac{1}{\left|z\right|}+1\right)^{M}}{\left|1-\frac{y}{\left|z\right|}\right|^{M}}\cdot\left|z-y\right|^{M-K}\leq\left(\frac{2}{\varepsilon_{1}}\right)^{K-M}\cdot\left[\frac{1+\frac{1}{\left|z\right|}}{1-\frac{1}{2}}\right]^{M}\leq\left(\frac{2}{\varepsilon_{1}}\right)\cdot2^{M}\cdot\left(1+\frac{1}{\varepsilon_{1}}\right)^{M}.
\]
If otherwise $\left|z\right|\leq2\cdot\left(\left|x\right|+\frac{\varepsilon_{1}}{2}\right)$,
we observe that
\[
\left(\left[\mathbb{R}^{d}\setminus B_{\varepsilon_{1}}\left(x\right)\right]\cap\overline{B_{2\cdot\left(\left|x\right|+\frac{\varepsilon_{1}}{2}\right)}}\left(0\right)\right)\times\overline{B_{\varepsilon_{1}/2}}\left(x\right)\to\mathbb{R},\left(z,y\right)\mapsto\left(1+\left|z\right|\right)^{M}\cdot\left|z-y\right|^{-K}
\]
is a continuous function on a compact set and hence bounded. All in
all, this shows that the constant
\[
C_{K,M,\varepsilon_{1},x}:=\sup_{y\in\overline{B_{\varepsilon_{1}/2}}\left(x\right)}\,\sup_{z\in\mathbb{R}^{d}\setminus B_{\varepsilon_{1}}\left(x\right)}\left(1+\left|z\right|\right)^{M}\cdot\left|z-y\right|^{-K}
\]
is finite. Here, we note that $x\in\mathbb{R}^{d}$ and $\varepsilon_{1}>0$
are fixed.

Since $K$ can be chosen arbitrarily large and $\left\Vert h\right\Vert $
is bounded on $K_{o}\left(W_{0},V,R_{0}\right)$ (cf. Lemma \ref{lem:related_to_assumptions}\ref{enu:NormBoundedOnKoAndDeterminantEstimate}),
so that large powers of $\left\Vert h\right\Vert $ can be estimated
by (constant multiplies of) smaller powers of $\left\Vert h\right\Vert $,
this establishes the desired decay estimate. 
\end{proof}
We can now formulate wavelet criteria for regular directed points.
Note that the following theorem can \emph{not} be understood as a
characterization in the strict sense, since the necessary condition
in part \ref{enu:RegularDirectedPointNecessaryCondition} concludes
a certain decay behaviour on $K_{i}(W,V,R)$, whereas the sufficient
condition in part \ref{enu:RegularDirectedPointSufficientCondition}
requires this behaviour on the \emph{larger} set $K_{o}(W,V,R)$.

Another important detail of the theorem is that the neighborhoods
$U,W$ and also $R>0$ in part \ref{enu:RegularDirectedPointNecessaryCondition}
can be chosen \emph{independently} of the wavelet $\psi$. This will
become important for the proof of wavelet characterizations using
multiple wavelets.
\begin{thm}
\label{thm:almost_char}Assume that the dual action is $V$-microlocally
admissible in direction $\xi$. Let $u\in\mathcal{S}'(\mathbb{R}^{d})$
and $(x,\xi)\in\mathbb{R}^{d}\times(\mathcal{O}\cap S^{d-1})$. 
\begin{enumerate}[label=(\alph*)]
\item \label{enu:RegularDirectedPointNecessaryCondition}If $(x,\xi)$
is a regular directed point of $u$, then there exists a neighborhood
$U$ of $x$, some $R>0$ and a $\xi$-neighborhood $W\subset S^{d-1}$
such that for \emph{all} admissible $\psi\in\mathcal{S}(\mathbb{R}^{d})$
with ${\rm supp}(\widehat{\psi})\subset V$, the following estimate
holds: 
\[
\forall N\in\mathbb{N}\,\exists C_{N}>0\,\forall y\in U\,\forall h\in K_{i}(W,V,R)~:~|W_{\psi}u(y,h)|\le C_{N}\|h\|^{N}.
\]
For each such $\psi$, we even have 
\[
\forall N\in\mathbb{N}\,\exists C_{N}>0\,\forall y\in U\,\forall h\in K_{i}(W,\,\widehat{\psi}^{-1}\left(\mathbb{C}\setminus\left\{ 0\right\} \right),\, R)~:~|W_{\psi}u(y,h)|\le C_{N}\cdot\left\Vert h\right\Vert ^{N}.
\]

\item \label{enu:RegularDirectedPointSufficientCondition}Let $\psi\in\mathcal{S}(\mathbb{R}^{d})$
be admissible with ${\rm supp}(\widehat{\psi})\subset V$. Assume
that $U$ is a neighborhood of $x$ and that there are $R>0$ and
a $\xi$-neighborhood $W\subset S^{d-1}$ such that 
\[
\forall N\in\mathbb{N}\,\exists C_{N}>0\,\forall y\in U\,\forall h\in K_{o}(W,V,R)~:~|W_{\psi}u(y,h)|\le C_{N}\cdot\left\Vert h\right\Vert ^{N}.
\]
Then $(x,\xi)$ is a regular directed point of $u$. 
\end{enumerate}
\end{thm}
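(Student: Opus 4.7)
For part \ref{enu:RegularDirectedPointNecessaryCondition}, I would use Lemma \ref{lem:reg_points_loc} to replace $u$ by $\varphi_{0}u$, where $\varphi_{0}\in C_{c}^{\infty}$ equals $1$ in a neighborhood of $x$ and witnesses the regular directed point property on some cone $C(W_{1},R_{1})$ around $\xi$. Pick $W\subset W_{1}\cap W_{0}$, $R\ge\max(R_{0},R_{1})$ (where $W_{0},R_{0}$ are supplied by microlocal admissibility), and let $U$ be a neighborhood of $x$ on which $\varphi_{0}\equiv1$; crucially, $U,W,R$ depend only on $\varphi_{0}$ and not on $\psi$. By Lemma \ref{lem:wavelet_local}, it suffices to estimate $W_{\psi}(\varphi_{0}u)(y,h)$. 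Parseval, applied to the compactly supported distribution $\varphi_{0}u$ against the Schwartz function $\overline{\pi(y,h)\psi}$, gives
$$W_{\psi}(\varphi_{0}u)(y,h)=|\det h|^{1/2}\int_{\mathbb{R}^{d}}\widehat{\varphi_{0}u}(\xi')\,e^{2\pi i\langle y,\xi'\rangle}\,\overline{\widehat{\psi}(h^{T}\xi')}\,d\xi'.$$
For $h\in K_{i}(W,V,R)$ the integrand is supported in $h^{-T}V\subset C(W,R)$, on which $\widehat{\varphi_{0}u}$ has faster-than-polynomial decay; bounding the integral using $\mu(h^{-T}V)=|\det h|^{-1}\mu(V)$ and Lemma \ref{lem:related_to_assumptions}\ref{enu:XiEstimatedByH},\ref{enu:NormBoundedOnKoAndDeterminantEstimate} yields the desired $\|h\|^{N}$ estimate. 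The refinement to $K_{i}(W,\widehat{\psi}^{-1}(\mathbb{C}\setminus\{0\}),R)$ follows by restricting the integral to the nontrivial support of $\widehat{\psi}(h^{T}\cdot)$.

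For part \ref{enu:RegularDirectedPointSufficientCondition}, pick $\varphi\in C_{c}^{\infty}$ with $\varphi\equiv1$ near $x$ and $\operatorname{supp}(\varphi)\subset U$. Shrinking $U$ and using Lemma \ref{lem:wavelet_local}, $|W_{\psi}(\varphi u)(y,h)|\le C_{N}\|h\|^{N}$ for $y$ in a smaller neighborhood $U'$ of $x$ and $h\in K_{o}(W,V,R)\cap K_{o}(W_{0},V,R_{0})$. Admissibility of $\psi$ combined with Fourier inversion then yields
$$\widehat{\varphi u}(\xi')=\int_{H}|\det h|^{-1/2}\,\widehat{\psi}(h^{T}\xi')\int_{\mathbb{R}^{d}}W_{\psi}(\varphi u)(y,h)\,e^{-2\pi i\langle y,\xi'\rangle}\,dy\,dh.$$
For $\xi'\in C(W',R')$ with $W'\subset W\cap W_{0}$ a sufficiently small $\xi$-neighborhood and $R'$ sufficiently large, the condition $h^{T}\xi'\in V$ forces $h\in K_{o}(W,V,R)\cap K_{o}(W_{0},V,R_{0})$, and Lemma \ref{lem:related_to_assumptions}\ref{enu:HEstimatedByXi} then converts $\|h\|^{N}$ decay into $|\xi'|^{-N\alpha}$ decay. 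Split the inner $y$-integral at $y\in U'$ (where the hypothesis applies directly) and $y\notin U'$ (where the compact support of $\varphi u$ and Schwartz-type estimates as in Lemma \ref{lem:wc_decay_general} yield rapid decay in $|y|$, with the $h$-dependence absorbed via Lemma \ref{lem:related_to_assumptions}\ref{enu:NormBoundedOnKoAndDeterminantEstimate}). The resulting double integral converges thanks to the integrability condition \ref{enu:NormIntegrability} of Definition \ref{defn:micro_regular}, giving $|\widehat{\varphi u}(\xi')|\le C_{N}(1+|\xi'|)^{-N}$ on $C(W',R')$; Lemma \ref{lem:reg_points_loc} then transfers this conclusion back to $u$.

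The main technical obstacle is the $y\notin U'$ contribution in part \ref{enu:RegularDirectedPointSufficientCondition}: the hypothesis provides no direct control there, so rapid decay in $|y|$ must be extracted from the compact support of $\varphi u$ while keeping the $h$-dependence tight enough that integrability against $|\widehat{\psi}(h^{T}\xi')|\,|\det h|^{-1/2}$ over $K_{o}$ still yields the $(1+|\xi'|)^{-N}$ bound. Matching the exponents supplied by microlocal admissibility with those coming from the Schwartz-type bounds of Lemma \ref{lem:wc_decay_general} is the chief bookkeeping step throughout, and it is precisely the integrability axiom \ref{enu:NormIntegrability} together with the norm estimates \ref{enu:NormOfInverseEstimateOnKo} that make the argument close up.
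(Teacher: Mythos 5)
Your part \ref{enu:RegularDirectedPointNecessaryCondition} is essentially the paper's own argument (localize via Lemma \ref{lem:wavelet_local}, pass to the Fourier side, use $h^{-T}V\subset C(W,R)$ for $h\in K_{i}$ together with Lemma \ref{lem:related_to_assumptions} to convert $|\xi'|^{-N}$ into $\|h\|^{N}$), and it is correct.

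Part \ref{enu:RegularDirectedPointSufficientCondition} has a genuine gap. You estimate $\widehat{\varphi u}(\xi')$ directly from the reconstruction formula by splitting the $y$-integral into $y\in U'$ and $y\notin U'$. The region $y\in U'$ is fine, and so is the region where $y$ has positive distance from ${\rm supp}(\varphi u)$, since there $(1+|h^{-1}(z-y)|)^{-K}\le\|h\|^{K}|z-y|^{-K}$ supplies both a positive power of $\|h\|$ and integrability in $y$. But there remains the \emph{bounded} annulus of points $y\notin U'$ lying within distance $\epsilon$ of ${\rm supp}(\varphi u)$ --- in particular the transition region where $0<\varphi<1$. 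There the hypothesis gives you nothing about $W_{\psi}(\varphi u)(y,h)$: Lemma \ref{lem:wavelet_local} requires $1-\varphi$ to vanish on a ball around $y$, and the only available bound, Lemma \ref{lem:wc_decay_general}\ref{enu:WaveletTrafoGeneralEstimateForDistributions}, carries the factors $|\det h|^{-1/2}(1+\|h^{-1}\|)^{N}$, i.e.\ \emph{negative} powers of $\|h\|$; after converting via $\|h\|\lesssim|\xi'|^{-\alpha}$ this yields a bound that \emph{grows} polynomially in $|\xi'|$ and destroys the rapid decay. ``Rapid decay in $|y|$'' is of no help on a bounded region, and what is actually needed there is decay in $\|h\|$, for which you have no source. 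Note that the intermediate claim you aim for --- rapid decay of $\widehat{\varphi u}$ on an entire cone --- amounts to directional regularity of $\varphi u$ at \emph{every} point of ${\rm supp}\,\varphi$, so the transition region cannot be ignored.

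The paper circumvents exactly this by never estimating $\widehat{\varphi u}$ directly: it forms the $K_{o}$-restricted reconstruction $\eta$, identifies it as $P_{\zeta}u$ with $\zeta\equiv1$ on $C(W,R)$, proves that $\eta$ is $C^{\infty}$ on $B_{\varepsilon_{1}/2}(x)$ --- there the ``far'' region is measured from the evaluation point $z$ near $x$, so $|z-y|\ge\varepsilon_{1}/2$ whenever $y\notin B_{\varepsilon_{1}}(x)$, while the ``near'' region $B_{\varepsilon_{1}}(x)$ lies entirely inside $U$ where the hypothesis applies --- and only then passes to the frequency side via the Fourier localization Lemma \ref{lem:Four_loc}. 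You would need either this device or some genuine substitute for controlling the transition region.
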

\begin{proof}
In the remainder of the proof, we will need the formula 
\begin{equation}
\left(W_{\psi}u(\cdot,h)\right)^{\wedge}(\xi')=\widehat{u}(\xi')\cdot\left|\det\left(h\right)\right|^{1/2}\cdot\overline{\widehat{\psi}(h^{T}\xi')}\qquad\forall\xi'\in\mathbb{R}^{d}\label{eq:WaveletTrafoFourierTrafoForCompactlySupportedDistributions}
\end{equation}
for the Fourier transform of the Wavelet transform, which follows
by the convolution theorem. For later reference, we provide a direct
calculation valid for compactly supported $u$: Recall from \cite[Theorem 7.23]{RudinFA}
that the tempered distribution $\widehat{u}$ is given by integration
against a smooth, polynomially bounded function (again denoted by
$\widehat{u}$). Using the definition of the Fourier transform for
tempered distributions, we can hence write 
\begin{align}
\left(W_{\psi}u\right)\left(y,h\right) & =\left\langle u\mid\pi\left(y,h\right)\psi\right\rangle =\left\langle u,\overline{\pi\left(y,h\right)\psi}\right\rangle \nonumber \\
 & =\left\langle \widehat{u},\mathcal{F}^{-1}\left(\overline{\pi\left(y,h\right)\psi}\right)\right\rangle \nonumber \\
 & \overset{\left(\dagger\right)}{=}\left|\det\left(h\right)\right|^{1/2}\cdot\int_{\mathbb{R}^{d}}\widehat{u}\left(\xi\right)\cdot\overline{\widehat{\psi}\left(h^{T}\xi\right)}\cdot e^{2\pi i\left\langle y,\xi\right\rangle }\,{\rm d}\xi\label{eq:WaveletTransformUsingFourierTransforms}\\
 & =\left|\det\left(h\right)\right|^{1/2}\cdot\left[\mathcal{F}^{-1}\left(\widehat{u}\cdot\overline{\widehat{\psi}\left(h^{T}\cdot\right)}\right)\right]\left(y\right),\nonumber 
\end{align}
where we used equation \eqref{eq:QuasiRegularOnFourierSide} together
with $\mathcal{F}^{-1}\overline{f}=\overline{\widehat{f}}$ at $\left(\dagger\right)$.
But $\widehat{u}$ has polynomially bounded derivatives and $\widehat{\psi}\left(h^{T}\cdot\right)$
is a Schwartz function. Together, this entails $\widehat{u}\cdot\overline{\widehat{\psi}\left(h^{T}\cdot\right)}\in\mathcal{S}\left(\mathbb{R}^{d}\right)$,
so that Fourier inversion finally yields equation \eqref{eq:WaveletTrafoFourierTrafoForCompactlySupportedDistributions}.
The analogous formula (with a similar, but easier proof) also holds
for $W_{\psi}\varphi$, for any Schwartz function $\varphi$.

Let us now prove part \ref{enu:RegularDirectedPointNecessaryCondition}.
To this end, assume that $(x,\xi)$ is a regular directed point of
$u$. Fix $\varphi\in C_{c}^{\infty}(\mathbb{R}^{d})$ satisfying
$\varphi\equiv1$ on $B_{\varepsilon_{1}}(x)$ for some $\varepsilon_{1}>0$
as well as 
\begin{equation}
\left|\widehat{\varphi u}\left(\xi'\right)\right|\le C_{N}\cdot\left(1+\left|\xi'\right|\right)^{-N}\label{eq:RegularDirectedPointNecessaryConditionPrerequisite}
\end{equation}
for all $\xi'\in C(W_{1})$ and $N\in\mathbb{N}$, where $W_{1}\subset S^{d-1}$
is a $\xi$-neighborhood.

Let $R_{0}>0$ and the $\xi$-neighborhood $W_{0}\subset S^{d-1}$
be provided by the assumption of $V$-microlocal admissibility in
direction $\xi$ (cf. Definition \ref{defn:micro_regular}). Furthermore,
set $U:=B_{\varepsilon_{1}/2}(x)$. Observe that all choices up to
this point only depend on $u,x,\xi$ and $V$, but not on the wavelet
$\psi$.

Set $V':=\widehat{\psi}^{-1}\left(\mathbb{C}\setminus\left\{ 0\right\} \right)\subset V$
and observe (cf. Remark \ref{rem:incl_prop}) the chain of inclusions
\begin{align}
K_{i}\left(W_{0}\cap W_{1},V,R_{0}\right) & \subset K_{i}\left(W_{0}\cap W_{1},V',R_{0}\right)\subset K_{i}\left(W_{0},V',R_{0}\right)\nonumber \\
 & \subset K_{o}\left(W_{0},V',R_{0}\right)\subset K_{o}\left(W_{0},V,R_{0}\right).\label{eq:NecessaryConditionInclusionChain}
\end{align}
By Lemma \ref{lem:wavelet_local}, it is sufficient to show 
\[
|\left(W_{\psi}(\varphi u)\right)(y,h)|\le C_{N}\|h\|^{N}
\]
for all $y\in U$ and $h\in K_{i}(W_{0}\cap W_{1},V',R_{0})$. This
will entail rapid decay of $W_{\psi}u$ on the set $U\times K_{i}\left(W_{0}\cap W_{1},V',R_{0}\right)\supset U\times K_{i}\left(W_{0}\cap W_{1},V,R_{0}\right)$.

For such $h$, we use equation \eqref{eq:WaveletTransformUsingFourierTransforms}
with $\varphi u$ instead of $u$ -- together with the fact that $\varphi u$
is compactly supported -- to estimate 
\[
\left|\left(W_{\psi}(\varphi u)\right)(y,h)\right|\le\left|\det\left(h\right)\right|^{1/2}\cdot\int_{\mathbb{R}^{d}}\left|(\varphi u)^{\wedge}(\xi')\right|\cdot\left|\widehat{\psi}(h^{T}\xi')\right|\,{\rm d}\xi'.
\]
We observe that the definition of $K_{i}\left(W_{0}\cap W_{1},V',R_{0}\right)$
implies that the set $h^{-T}V'$ on which $\widehat{\psi}(h^{T}\cdot)$
does not vanish is contained in $C(W_{0}\cap W_{1})\subset C\left(W_{1}\right)$
for each $h\in K_{i}\left(W_{0}\cap W_{1},V',R_{0}\right)$, so that
equation \eqref{eq:RegularDirectedPointNecessaryConditionPrerequisite}
yields $|\widehat{\varphi u}(\xi')|\le C_{N}\cdot\left(1+\left|\xi'\right|\right)^{-N}$
on this set.

But $\widehat{\psi}\left(h^{T}\xi'\right)\neq0$ also implies $h^{T}\xi'\in V'\subset V$,
so that Lemma \ref{lem:related_to_assumptions}\ref{enu:XiEstimatedByH}
yields $C=C\left(V\right)>0$ with 
\[
\left|\xi'\right|^{-1}=\left|h^{-T}h^{T}\xi'\right|^{-1}\leq C\cdot\left\Vert h\right\Vert ,
\]
which leads to 
\[
\left|\widehat{\varphi u}\left(\xi'\right)\right|\le C_{N}\cdot\left(1+\left|\xi'\right|\right)^{-N}\leq C_{N}\cdot\left|\xi'\right|^{-N}\leq C_{N}'\cdot\|h\|^{N}.
\]
In summary, we obtain 
\begin{align*}
\left|\left(W_{\psi}(\varphi u)\right)(y,h)\right| & \le C_{N}'\cdot\|h\|^{N}\cdot\left|\det\left(h\right)\right|^{1/2}\int_{\mathbb{R}^{d}}|\widehat{\psi}(h^{T}\xi')|\,{\rm d}\xi'\\
 & =C_{N}'\cdot\|h\|^{N}\cdot\left|\det\left(h\right)\right|^{-1/2}\|\widehat{\psi}\|_{1}.\\
 & \leq\|\widehat{\psi}\|_{1}\cdot C_{N}''\cdot\left\Vert h\right\Vert ^{N-\frac{\alpha_{1}d}{2}},
\end{align*}
wher the last estimate is due to Lemma \ref{lem:related_to_assumptions}\ref{enu:NormBoundedOnKoAndDeterminantEstimate}
and to the chain of inclusions in equation \eqref{eq:NecessaryConditionInclusionChain}.

The same lemma also yields that $\left\Vert h\right\Vert $ is bounded
on $K_{o}\left(W_{0},V,R_{0}\right)\supset K_{i}\left(W_{0}\cap W_{1},V',R_{0}\right)$,
so that large powers of $\left\Vert h\right\Vert $ can be estimated
by smaller powers. Hence, the above inequality implies the desired
decay estimate, because $N\in\mathbb{N}$ is arbitrary.

For the proof of part \ref{enu:RegularDirectedPointSufficientCondition},
observe that we may assume $u$ to be compactly supported by Lemma
\ref{lem:reg_points_loc} and Lemma \ref{lem:wavelet_local}. By assumption,
\begin{equation}
|W_{\psi}u(y,h)|\le C_{N}\|h\|^{N}\label{eq:SufficientConditionRepeated}
\end{equation}
holds for all $N\in\mathbb{N}$, $y\in B_{\varepsilon_{1}}(x)$ and
$h\in K_{o}(W,V,R)$. With $R_{0}>0$ and $W_{0}\subset S^{d-1}\cap\mathcal{O}$
as in Definition \ref{defn:micro_regular}, we may assume $W\subset W_{0}\subset\mathcal{O}$
and $R>R_{0}$. In particular, this implies $C\left(W,R\right)\subset\mathcal{O}$,
thanks to Assumption \ref{assume:proper_dual}\ref{enu:DualActionContainsRays}.

We let 
\begin{equation}
\eta:=\int_{\mathbb{R}^{d}}\int_{K_{o}(W,V,R)}W_{\psi}u(y,h)\cdot\pi(y,h)\psi\,\frac{{\rm d}h}{|{\rm det}(h)|}\,{\rm d}y,\label{eqn:def_eta}
\end{equation}
as well as 
\begin{equation}
\zeta:\mathbb{R}^{d}\to\left[0,\infty\right),\xi'\mapsto\int_{K_{o}(W,V,R)}\left|\widehat{\psi}(h^{T}\xi')\right|^{2}{\rm d}h.\label{eqn:def_zeta}
\end{equation}
Here the integral defining $\eta$ is to be understood in the weak
sense, i.e., given $\varphi\in\mathcal{S}(\mathbb{R}^{d})$, we let
\begin{align}
\langle\eta\mid\varphi\rangle & =\int_{\mathbb{R}^{d}}\int_{K_{o}(W,V,R)}W_{\psi}u(y,h)\cdot\langle\pi(y,h)\psi\mid\varphi\rangle_{L^{2}}\frac{{\rm d}h}{|{\rm det}(h)|}\,{\rm d}y\nonumber \\
 & =\int_{\mathbb{R}^{d}}\int_{K_{o}\left(W,V,R\right)}W_{\psi}u\left(y,h\right)\cdot\overline{\left(W_{\psi}\varphi\right)\left(y,h\right)}\,\frac{{\rm d}h}{|{\rm det}(h)|}\,{\rm d}y.\label{eqn:weak_def_eta}
\end{align}
We now intend to show the following statements: 
\begin{enumerate}
\item \label{enu:AlmostCharacterizationEtaWellDefined}$\eta$ is a well-defined
element of $\mathcal{S}'(\mathbb{R}^{d})$, and the integral in equation
\eqref{eqn:weak_def_eta} converges absolutely for all $\varphi\in\mathcal{S}\left(\mathbb{R}^{d}\right)$, 
\item \label{enu:AlmostCharacterizationZetaSmooth}$\zeta$ is a $C^{\infty}$-function
with polynomially bounded derivatives, 
\item \label{enu:AlmostCharacterizationEtaIsFourierLocalizationWithZeta}$\eta=P_{\zeta}u$,
and $\zeta|_{C(W,R)}\equiv1$, 
\item \label{enu:AlmostCharacterizationEtaIsSmooth}in a suitable neighborhood
of $x$, the distribution $\eta$ is given by a $C^{\infty}$-function. 
\end{enumerate}
It is worth noting that only the last part will use the assumption
regarding the decay of $W_{\psi}u$ (cf. equation \eqref{eq:SufficientConditionRepeated}).

These observations combined will yield that $(x,\xi)$ is a regular
directed point of $u$: By observation \eqref{enu:AlmostCharacterizationEtaIsSmooth},
$\left(x,\xi\right)$ is a regular directed point of $\eta$, and
then the observations \eqref{enu:AlmostCharacterizationZetaSmooth}
and \eqref{enu:AlmostCharacterizationEtaIsFourierLocalizationWithZeta},
together with Lemma \ref{lem:Four_loc}, show that $\left(x,\xi\right)$
is also a regular directed point of $u$.

Let us now provide the details. For the well-definedness of $\eta$,
we use parts \ref{enu:WaveletTrafoGeneralEstimateForDistributions}
and \ref{enu:WaveletTrafoEstimateForSchwartzFunctionsOnTheCone} of
Lemma \ref{lem:wc_decay_general} to estimate, with a suitable $N\in\mathbb{N}$
depending on $u$, and arbitrary $M,K\in\mathbb{N}$, $x\in\mathbb{R}^{d}$
and $h\in K_{o}\left(W,V,R\right)\subset K_{o}\left(W_{0},V,R_{0}\right)$:
\begin{align*}
|W_{\psi}u(y,h)| & \le C_{u,\psi}\cdot\left|\det\left(h\right)\right|^{-1/2}\cdot(1+\|h^{-1}\|)^{N}\cdot\max\left\{ 1,\smash{\left\Vert h\right\Vert ^{N}}\right\} \cdot(1+|y|)^{N},\\
\left|W_{\psi}\varphi\left(y,h\right)\right| & \leq C_{M,K,\psi,W_{0},V,R_{0}}\left|\varphi\right|_{M+K}\cdot\left|\det\left(h\right)\right|^{-1/2}\cdot\left(1+\left|y\right|\right)^{-K}\left\Vert h\right\Vert ^{M}.
\end{align*}
Recall (from Lemma \ref{lem:related_to_assumptions}\ref{enu:NormBoundedOnKoAndDeterminantEstimate})
that the norm $\left\Vert h\right\Vert $ is bounded on $K_{o}(W_{0},V,R_{0})\supset K_{o}\left(W,V,R\right)$,
so that the factor $\max\left\{ 1,\smash{\left\Vert h\right\Vert ^{N}}\right\} $
can be bounded by a constant while estimating the following integral
(which is nothing but the absolute version of the integral defining
$\left\langle \eta\mid\varphi\right\rangle $): 
\begin{align*}
 & \hphantom{=\:}\int_{\mathbb{R}^{d}}\int_{K_{o}(W,V,R)}\left|W_{\psi}u(y,h)\cdot\overline{W_{\psi}\varphi(y,h)}\right|\frac{{\rm d}h}{|\det(h)|}\,{\rm d}y\\
 & \leq C'\cdot\left|\varphi\right|_{M+K}\cdot\int_{\mathbb{R}^{d}}\left(1+\left|y\right|\right)^{N-K}\,{\rm d}y\cdot\int_{K_{o}(W,V,R)}\frac{\left(1+\left\Vert h^{-1}\right\Vert \right)^{N}}{\left|\det\left(h\right)\right|^{2}}\cdot\left\Vert h\right\Vert ^{M}\,{\rm d}h
\end{align*}
for a suitable constant $C'=C'\left(M,K,\psi,W_{0},V,R_{0},u\right)$
and $N=N\left(u\right)$.

The first integral is finite as soon as $K\ge N+d+1$. For the second
integral, we can use Lemma \ref{lem:related_to_assumptions}\ref{enu:NormBoundedOnKoAndDeterminantEstimate}
to estimate the integrand by 
\[
\frac{(1+\|h^{-1}\|)^{N}}{|{\rm det}(h)|^{2}}\|h\|^{M}\le C''\cdot\left\Vert h\right\Vert ^{-\alpha_{1}N}\cdot\left\Vert h\right\Vert ^{-2d\alpha_{1}}\cdot\|h\|^{M}=C''\|h\|^{M-\alpha_{1}(N+2d)}.
\]
But $\left\Vert h\right\Vert $ is bounded on $K_{o}$ by Lemma \ref{lem:related_to_assumptions}\ref{enu:NormBoundedOnKoAndDeterminantEstimate},
so that large powers of $\left\Vert h\right\Vert $ can be estimated
by small powers. Hence, part \ref{enu:NormIntegrability} of Definition
\ref{defn:micro_regular} implies that the integral is finite, as
soon as 
\[
M-\alpha_{1}(N+2d)\ge\alpha_{2}.
\]
Thus well-definedness of $\eta\in\mathcal{S}'(\mathbb{R}^{d})$ (and
absolute convergence of the integral in equation \eqref{eqn:weak_def_eta})
is established.

For the proof of property \eqref{enu:AlmostCharacterizationZetaSmooth},
we note that $H\leq{\rm GL}\left(\mathbb{R}^{d}\right)$ is a closed
subgroup and hence $\sigma$-compact, i.e. $H=\bigcup_{\ell\in\mathbb{N}}K_{\ell}$
with $K_{\ell}\subset K_{\ell+1}$ and compact $K_{\ell}$ for all
$\ell$. Define 
\[
\zeta_{\ell}:\mathbb{R}^{d}\to\left[0,\infty\right),\xi'\mapsto\int_{K_{\ell}\cap K_{o}(W,V,R)}\left|\widehat{\psi}(h^{T}\xi')\right|^{2}{\rm d}h
\]
for each $\ell\in\mathbb{N}$. Smoothness of $\left|\smash{\widehat{\psi}}\right|^{2}$
together with compactness of $K_{\ell}$ easily imply that differentiation
and integration can be interchanged in the definition of $\zeta_{\ell}$,
so that each $\zeta_{\ell}$ is a smooth function. Moreover, monotone
convergence implies $\zeta_{\ell}\left(\xi'\right)\to\zeta\left(\xi'\right)$
for all $\xi'\in\mathbb{R}^{d}$.

We first observe $\zeta_{\ell}\equiv0$ on $\mathcal{O}^{c}$, because
$\zeta_{\ell}\left(\xi'\right)\neq0$ would imply $\widehat{\psi}\left(h^{T}\xi'\right)\neq0$
for some $h\in H$ and hence $h^{T}\xi'\in{\rm supp}\left(\smash{\widehat{\psi}}\right)\subset\mathcal{O}$,
which entails $\xi'\in h^{-T}\mathcal{O}\subset\mathcal{O}$, because
$\mathcal{O}$ is $H^{T}$-invariant. We will now show 
\begin{equation}
\left|\left(\smash{\partial^{\beta}\zeta_{\ell}}\right)\left(\xi'\right)\right|\leq C_{\beta}\cdot\left(1+\left|\xi'\right|\right)^{\alpha}\label{eq:AlmostCharacterizationZetaApproximationsHaveBoundedDerivatives}
\end{equation}
for all $\beta\in\mathbb{N}_{0}^{d}$ and $\xi'\in\mathcal{O}$ (and
hence for $\xi'\in\overline{\mathcal{O}}$ by continuity of $\partial^{\beta}\zeta_{\ell}$),
where the constants $C_{\beta}>0$ are independent of $\ell\in\mathbb{N}$
and of $\xi'\in\mathcal{O}$ and where $\alpha\geq0$ is taken from
Assumption \ref{assume:proper_dual}\ref{enu:PolynomialGrowthOfMeasureOfIntersection}.
This implies that estimate \eqref{eq:AlmostCharacterizationZetaApproximationsHaveBoundedDerivatives}
is even valid on all of $\mathbb{R}^{d}$ because of $\zeta_{\ell}\equiv0$
on the open set $\overline{\mathcal{O}}^{c}\subset\mathcal{O}^{c}$
(see above).

Local boundedness of the higher derivatives then implies (local) equicontinuity
of $\left(\partial^{\beta}\zeta_{\ell}\right)_{\ell}$ for all $\beta$.
Thus, an Arzela-Ascoli argument implies locally uniform convergence
$\partial^{\beta}\zeta_{\ell}\to\zeta_{\beta}$ (along some subsequence%
\footnote{Using $\zeta_{\beta}=\partial^{\beta}\zeta$, one can show that the
convergence $\partial^{\beta}\zeta_{\ell}\to\partial^{\beta}\zeta$
even holds without restricting to a subsequence.%
}) with continuous functions $\zeta_{\beta}:\mathbb{R}^{d}\to\mathbb{R}$.
The pointwise convergence $\zeta_{\ell}\to\zeta$ implies $\zeta_{0}=\zeta$,
which then entails that $\zeta$ is smooth with $\partial^{\beta}\zeta=\zeta_{\beta}$
for all $\beta\in\mathbb{N}_{0}^{d}$. Finally, we get 
\[
\left|\left(\smash{\partial^{\beta}\zeta}\right)\left(\xi'\right)\right|=\lim_{\ell\to\infty}\left|\left(\smash{\partial^{\beta}\zeta_{\ell}}\right)\left(\xi'\right)\right|\leq C_{\beta}\cdot\left(1+\left|\xi'\right|\right)^{\alpha}
\]
for all $\xi'\in\overline{\mathcal{O}}$. Together with $\zeta\equiv0$
on $\overline{\mathcal{O}}^{c}\subset\mathcal{O}^{c}$, we see that
all derivatives of $\zeta$ are polynomially bounded.

In order to prove the estimate \eqref{eq:AlmostCharacterizationZetaApproximationsHaveBoundedDerivatives},
we first note that in the evaluation of $\zeta_{\ell}(\xi')$, the
domain of integration can be reduced to $K_{o}(W,V,R)\cap K_{\ell}\cap H_{\xi',V}$,
using ${\rm supp}(\widehat{\psi})\subset V$, and the definition of
$H_{\xi',V}$ (cf. Assumption \ref{assume:proper_dual}). Thus, 
\[
\left|(\partial^{\beta}\zeta_{\ell})(\xi')\right|\le\int_{K_{o}(W,V,R)\cap H_{\xi',V}}\left|\left[\partial^{\beta}\left(\left|\smash{\widehat{\psi}}\left(h^{T}\cdot\right)\right|^{2}\right)\right]\left(\xi'\right)\right|\,{\rm d}h.
\]
Using the chain rule, we see that the integrand can be estimated by
\[
\left|\left[\partial^{\beta}\left(\left|\smash{\widehat{\psi}}\left(h^{T}\cdot\right)\right|^{2}\right)\right]\left(\xi'\right)\right|\le C_{\left|\beta\right|}\cdot(1+\|h\|)^{|\beta|}\cdot\max_{\left|\sigma\right|\leq\left|\beta\right|}\left\Vert \partial^{\sigma}|\widehat{\psi}|^{2}\right\Vert _{\sup}.
\]
In particular, since the norm $\left\Vert h\right\Vert $ is bounded
on $K_{o}\left(W_{0},V,R\right)\supset K_{o}(W,V,R)$ (cf. Lemma \ref{lem:related_to_assumptions}\ref{enu:NormBoundedOnKoAndDeterminantEstimate}),
we obtain the bound 
\[
\left|(\partial^{\beta}\zeta_{\ell})(\xi')\right|\le C_{\left|\beta\right|}'\cdot\mu_{H}(H_{\xi',V}),
\]
where $C_{\left|\beta\right|}'>0$ is independent of $\ell\in\mathbb{N}$
and $\xi'\in\mathcal{O}$. But $\mu_{H}(H_{\xi',V})\leq C\cdot\left(1+\left|\xi'\right|\right)^{\alpha}$
by assumption \ref{assume:proper_dual}\ref{enu:PolynomialGrowthOfMeasureOfIntersection}.
This proves equation \eqref{eq:AlmostCharacterizationZetaApproximationsHaveBoundedDerivatives}
and thus also observation \eqref{enu:AlmostCharacterizationZetaSmooth}.

For observation \eqref{enu:AlmostCharacterizationEtaIsFourierLocalizationWithZeta},
first recall that we are assuming $u$ to be of compact support. Hence,
equation \eqref{eq:WaveletTrafoFourierTrafoForCompactlySupportedDistributions}
yields 
\[
\left(W_{\psi}u(\cdot,h)\right)^{\wedge}(\xi')=\widehat{u}(\xi')\cdot\left|\det\left(h\right)\right|^{1/2}\cdot\overline{\widehat{\psi}(h^{T}\xi')}\qquad\forall\xi'\in\mathbb{R}^{d}.
\]
Using Fubini's theorem in equation \eqref{eqn:weak_def_eta} and then
Plancherel's theorem on the inner integral, we conclude 
\begin{align*}
\left\langle \eta,\widehat{\varphi}\right\rangle =\left\langle \eta\mid\overline{\widehat{\varphi}}\right\rangle =\left\langle \eta\mid\mathcal{F}^{-1}\overline{\varphi}\right\rangle  & =\int_{K_{o}\left(W,V,R\right)}\int_{\mathbb{R}^{d}}W_{\psi}u\left(y,h\right)\cdot\overline{\left[W_{\psi}\left(\mathcal{F}^{-1}\overline{\varphi}\right)\right]\left(y,h\right)}\,{\rm d}y\,\frac{{\rm d}h}{|{\rm det}(h)|}\\
 & =\int_{K_{o}\left(W,V,R\right)}\int_{\mathbb{R}^{d}}\widehat{u}\left(\xi'\right)\cdot\overline{\widehat{\psi}\left(h^{T}\xi'\right)}\cdot\overline{\overline{\varphi}\left(\xi'\right)\cdot\overline{\widehat{\psi}\left(h^{T}\xi'\right)}}\,{\rm d}\xi'\,{\rm d}h\\
 & =\int_{\mathbb{R}^{d}}\widehat{u}\left(\xi'\right)\varphi\left(\xi'\right)\cdot\int_{K_{o}\left(W,V,R\right)}\left|\smash{\widehat{\psi}}\left(h^{T}\xi'\right)\right|^{2}\,{\rm d}h\,{\rm d}\xi'\\
 & =\left\langle \widehat{u},\zeta\cdot\varphi\right\rangle =\left\langle \widehat{u},\zeta\cdot\mathcal{F}^{-1}\widehat{\varphi}\right\rangle \\
 & =\left(P_{\zeta}u\right)\left(\widehat{\varphi}\right)=\left\langle P_{\zeta}u,\widehat{\varphi}\right\rangle ,
\end{align*}
where we used the definition of $P_{\zeta}u$ given in Lemma \ref{lem:Four_loc}
in the last line. Note that the application of Fubini's theorem is
justified by the absolute convergence of the integrals proved in observation
\eqref{enu:AlmostCharacterizationEtaWellDefined}. As this holds for
all $\varphi\in\mathcal{S}\left(\mathbb{R}^{d}\right)$, we conclude
$\eta=P_{\zeta}u$.

To conclude the proof of observation \eqref{enu:AlmostCharacterizationEtaIsFourierLocalizationWithZeta},
let $\xi'\in C\left(W,R\right)\subset\mathcal{O}$ be arbitrary. For
any $h\in H$ with $\widehat{\psi}\left(h^{T}\xi'\right)\neq0$, we
have $h^{T}\xi'\in{\rm supp}\left(\smash{\widehat{\psi}}\right)\subset V$
and hence $\xi'\in h^{-T}V\cap C\left(W,R\right)$, which means $h\in K_{o}\left(W,V,R\right)$.
Using the admissibility of $\psi$, we arrive at 
\[
\zeta\left(\xi'\right)=\int_{K_{o}\left(W,V,R\right)}\left|\widehat{\psi}\left(h^{T}\xi'\right)\right|^{2}\,{\rm d}h=\int_{H}\left|\widehat{\psi}\left(h^{T}\xi'\right)\right|^{2}\,{\rm d}h=1.
\]

Finally, for the proof of statement \eqref{enu:AlmostCharacterizationEtaIsSmooth},
we define the auxiliary function 
\[
\kappa:B_{\varepsilon_{1}/2}(x)\to\mathbb{C},\, z\mapsto\int_{\mathbb{R}^{d}}\int_{K_{o}(W,V,R)}W_{\psi}u(y,h)\cdot\left(\pi(y,h)\psi\right)(z)\,\frac{{\rm d}h}{\left|\det\left(h\right)\right|}\,{\rm d}y.
\]
In other words, $\kappa$ is obtained by pointwise evaluation of the
integral defining $\eta$ weakly in equation \eqref{eqn:def_eta}.
Let us first prove that $\kappa$ is well-defined and smooth on $B_{\varepsilon_{1}/2}(x)$.
To this end, we set 
\[
\kappa_{1}(z):=\int_{B_{\varepsilon_{1}}(x)}\int_{K_{o}(W,V,R)}W_{\psi}u(y,h)\cdot\left(\pi(y,h)\psi\right)(z)\,\frac{{\rm d}h}{\left|\det\left(h\right)\right|}\,{\rm d}y
\]
and $\kappa_{2}:=\kappa-\kappa_{1}$.

We want to show that integration and partial differentiation (with
respect to $z$) are interchangeable. For $\beta\in\mathbb{N}_{0}^{d}$,
we use the chain rule and Lemma \ref{lem:related_to_assumptions}\ref{enu:NormBoundedOnKoAndDeterminantEstimate},
as well as Hadamard's inequality $\left|\det\left(g\right)\right|\leq\left\Vert g\right\Vert ^{d}$
to estimate 
\begin{align*}
 & \phantom{=\,}\left|\partial^{\beta}|_{z'=z}\left(z'\mapsto\left|\det\left(h\right)\right|^{-1}\cdot\left[\pi\left(y,h\right)\psi\right]\left(z'\right)\right)\right|\\
 & =\left|\det\left(h\right)\right|^{-3/2}\cdot\left|\partial^{\beta}|_{z'=z}\left(z'\mapsto\psi\left(h^{-1}\left(z'-y\right)\right)\right)\right|\\
 & \leq C\left(\beta\right)\cdot\left\Vert h\right\Vert ^{-\frac{3}{2}\alpha_{1}d}\cdot\left(1+\left\Vert h^{-1}\right\Vert \right)^{\left|\beta\right|}\cdot\max_{\left|\sigma\right|\leq\left|\beta\right|}\left|\left(\smash{\partial^{\sigma}\psi}\right)\left(h^{-1}\left(z-y\right)\right)\right|\\
 & \leq C'\left(\beta\right)\cdot\left|\psi\right|_{\left|\beta\right|+N}\cdot\left\Vert h\right\Vert ^{-\alpha_{1}\left(\left|\beta\right|+\frac{3}{2}d\right)}\cdot\left(1+\left|h^{-1}\left(z-y\right)\right|\right)^{-N}.
\end{align*}
Thus, the partial derivatives of the integrand in the definition of
$\kappa$ can be bounded by 
\begin{align}
 & \hphantom{\leq\,}\left|\partial^{\beta}|_{z'=z}\left(z'\mapsto W_{\psi}u\left(y,h\right)\cdot\left|\det\left(h\right)\right|^{-1}\cdot\left[\pi\left(y,h\right)\psi\right]\left(z'\right)\right)\right|\nonumber \\
 & \leq\left|\psi\right|_{\left|\beta\right|+N}C'\left(\beta\right)\cdot\left|W_{\psi}u\left(y,h\right)\right|\cdot\left\Vert h\right\Vert ^{-\alpha_{1}\left(\left|\beta\right|+\frac{3}{2}d\right)}\cdot\left(1+\left|h^{-1}\left(z-y\right)\right|\right)^{-N}.\label{eqn:est_partial_z}
\end{align}
Furthermore, for the treatment of $\kappa_{1}$ we can additionally
employ the assumption concerning the decay of $W_{\psi}u$ (cf. equation
\eqref{eq:SufficientConditionRepeated}), namely 
\[
|W_{\psi}u(y,h)|\le C_{M}\|h\|^{M}
\]
for all $M\in\mathbb{N}$, and all $y\in B_{\varepsilon_{1}}(x)$
and $h\in K_{o}(W,V,R)$. Together with the trivial bound $\left(1+\left|h^{-1}\left(z-y\right)\right|\right)^{-N}\leq1$,
we arrive at 
\begin{align}
 & \phantom{\leq\,}\left|\partial^{\beta}|_{z'=z}\left(z'\mapsto W_{\psi}u\left(y,h\right)\cdot\left|\det\left(h\right)\right|^{-1}\cdot\left[\pi\left(y,h\right)\psi\right]\left(z'\right)\right)\right|\nonumber \\
 & \leq\left|\psi\right|_{\left|\beta\right|+N}C_{M}C'\left(\beta\right)\cdot\left\Vert h\right\Vert ^{M-\alpha_{1}\left(\left|\beta\right|+\frac{3}{2}d\right)}.\label{eq:EtaPartialDerivativeEstimateOnKo}
\end{align}
Recall from Definition \ref{defn:micro_regular}\ref{enu:NormIntegrability}
that 
\[
\int_{K_{o}\left(W,V,R\right)}\left\Vert h\right\Vert ^{\gamma}\,{\rm d}h\leq\int_{K_{o}\left(W_{0},V,R_{0}\right)}\left\Vert h\right\Vert ^{\gamma}\,{\rm d}h
\]
is finite for $\gamma=\alpha_{2}$. Using the boundedness of $\left\Vert h\right\Vert $
on $K_{o}\left(W_{0},V,R_{0}\right)$ (cf. Lemma \ref{lem:related_to_assumptions}\ref{enu:NormBoundedOnKoAndDeterminantEstimate}),
we see that $\left\Vert h\right\Vert ^{\gamma}$ can be estimated
by (a constant multiple of) $\left\Vert h\right\Vert ^{\delta}$ for
$\gamma\geq\delta$. Together, we see that the right-hand side of
equation \eqref{eq:EtaPartialDerivativeEstimateOnKo} is independent
of $z\in B_{\varepsilon_{1}/2}\left(x\right)$ and of $y\in B_{\varepsilon_{1}}\left(x\right)$
and integrable over $\left(y,h\right)\in B_{\varepsilon_{1}}\left(x\right)\times K_{o}\left(W,V,R\right)$,
as soon as 
\[
M-\alpha_{1}\left(\left|\beta\right|+\frac{3}{2}d\right)\geq\alpha_{2}.
\]
But $M\in\mathbb{N}$ can be chosen arbitrarily, so that $\kappa_{1}$
is well-defined and smooth with absolute convergence of the integral.

Finally, in order to prove smoothness of $\kappa_{2}$, we first employ
Lemma \ref{lem:wc_decay_general}\ref{enu:WaveletTrafoGeneralEstimateForDistributions}
together with Lemma \ref{lem:related_to_assumptions}\ref{enu:NormBoundedOnKoAndDeterminantEstimate}
to obtain 
\begin{align*}
|W_{\psi}u(y,h)| & \le C\cdot\left|\det(h)\right|^{-1/2}(1+\|h^{-1}\|)^{M}\cdot\max\left\{ 1,\|h\|^{M}\right\} \cdot(1+\left|y\right|)^{M}\\
 & \leq C'\left(u\right)\cdot\left\Vert h\right\Vert ^{-\alpha_{1}d/2}\cdot\left\Vert h\right\Vert ^{-M\alpha_{1}}\cdot\left(1+\left|y\right|\right)^{M}
\end{align*}
for suitable $M=M\left(u\right)\in\mathbb{N}$ and $C=C\left(u,\psi\right)>0$.

On the other hand, $\left|z-y\right|=\left|hh^{-1}\left(z-y\right)\right|\leq\left\Vert h\right\Vert \cdot\left(1+\left|h^{-1}\left(z-y\right)\right|\right)$,
which implies 
\[
\left(1+\left|h^{-1}\left(z-y\right)\right|\right)^{-N}\le\|h\|^{N}\cdot\left|z-y\right|^{-N}.
\]
Note that $z\in B_{\varepsilon_{1}/2}(x)$, whereas in the definition
of $\kappa_{2}$, we integrate over $y\in\mathbb{R}^{d}\setminus B_{\varepsilon_{1}}(x)$.
This entails for all relevant $y$ the estimate 
\[
\|h\|^{N}\cdot|z-y|^{-N}\le\|h\|^{N}\cdot\left(\left|y-x\right|-\varepsilon_{1}/2\right)^{-N}.
\]
We substitute these estimates into equation \eqref{eqn:est_partial_z}
and recall that the norm $\left\Vert h\right\Vert $ is bounded on
$K_{o}(W,V,R)$ (cf. Lemma \ref{lem:related_to_assumptions}\ref{enu:NormBoundedOnKoAndDeterminantEstimate}),
to obtain the following inequality, uniform with respect to $z\in B_{\varepsilon_{1}/2}(x)$:
\begin{align}
 & \hphantom{\leq\,}\left|\partial^{\beta}|_{z'=z}\left(z'\mapsto W_{\psi}u\left(y,h\right)\cdot\left|\det\left(h\right)\right|^{-1}\cdot\left[\pi\left(y,h\right)\psi\right]\left(z'\right)\right)\right|\nonumber \\
 & \leq C''\left(u,\beta\right)\cdot\left|\psi\right|_{\left|\beta\right|+N}\cdot\left\Vert h\right\Vert ^{N-\alpha_{1}\left(M+\left|\beta\right|+2d\right)}\cdot\left(1+\left|y\right|\right)^{M}\left(\left|y-x\right|-\frac{\varepsilon_{1}}{2}\right)^{-N}.\label{eq:EtaDerivativeBoundPart2}
\end{align}
Choosing $N\in\mathbb{N}$ large enough, we can ensure that both 
\[
\mathbb{R}^{d}\setminus B_{\varepsilon_{1}}(x)\to\mathbb{R},\, y\mapsto(1+|y|)^{M}\cdot\left(\left|y-x\right|-\frac{\varepsilon_{1}}{2}\right)^{-N}
\]
and 
\[
K_{o}(W,V,R)\to\mathbb{R},h\mapsto\left\Vert h\right\Vert ^{N-\alpha_{1}\left(M+\left|\beta\right|+2d\right)}
\]
become integrable; for the latter, we use the same reasoning as for
$\kappa_{1}$. But this establishes (well-definedness and) smoothness
of $\kappa_{2}$, and thus of $\kappa$.

Finally, whenever $\varphi\in\mathcal{S}(\mathbb{R}^{d})$ satisfies
${\rm supp}(\varphi)\subset B_{\varepsilon_{1}/2}(x)$, we have 
\begin{align*}
\int_{\mathbb{R}^{d}}\kappa(z)\overline{\varphi(z)}\,{\rm d}z & =\int_{\mathbb{R}^{d}}\overline{\varphi(z)}\int_{\mathbb{R}^{d}}\int_{K_{o}(W,V,R)}W_{\psi}u(y,h)\cdot\left[\pi\left(y,h\right)\psi\right]\left(z\right)\,\frac{{\rm d}h}{\left|\det\left(h\right)\right|}\,{\rm d}y\,{\rm d}z\\
 & =\int_{\mathbb{R}^{d}}\int_{K_{o}(W,V,R)}W_{\psi}u(y,h)\cdot\smash{\underbrace{\int_{\mathbb{R}^{d}}\overline{\varphi(z)}\cdot\left[\pi\left(y,h\right)\psi\right]\left(z\right)\,{\rm d}z}_{=\overline{W_{\psi}\varphi(y,h)}}}\,\frac{{\rm d}h}{\left|\det\left(h\right)\right|}\,{\rm d}y\\
 & =\langle\eta\mid\varphi\rangle,
\end{align*}
as a comparison with equation (\ref{eqn:weak_def_eta}) shows.

This computation hinges on the applicability of Fubini's theorem in
the second line, which is justified because $\varphi$ is bounded
and has support in $B_{\varepsilon_{1}/2}\left(x\right)$, so that
the integral over $z\in\mathbb{R}^{d}$ is actually an integral over
$z\in B_{\varepsilon_{1}/2}\left(x\right)$. But equations \eqref{eq:EtaPartialDerivativeEstimateOnKo}
and \eqref{eq:EtaDerivativeBoundPart2} (with $\beta=0$) show that
the integrand can be bounded by 
\begin{align*}
 & \hphantom{\leq\:\,}\left|\varphi\left(z\right)\cdot\left(W_{\psi}u\right)\left(y,h\right)\cdot\left[\pi\left(y,h\right)\psi\right]\left(z\right)\right|/\left|\det\left(h\right)\right|\\
 & \leq\begin{cases}
\left\Vert \varphi\right\Vert _{\infty}\left|\psi\right|_{N}C_{M}C'\cdot\left\Vert h\right\Vert ^{M-\frac{3}{2}\alpha_{1}d}, & y\in B_{\varepsilon_{1}}\left(x\right),\\
C''\left(u\right)\cdot\left\Vert \varphi\right\Vert _{\infty}\left|\psi\right|_{N}\cdot\left\Vert h\right\Vert ^{N-\alpha_{1}\left(M+2d\right)}\cdot\left(1+\left|y\right|\right)^{M}\left(\left|y-x\right|-\frac{\varepsilon_{1}}{2}\right)^{-N}, & y\in\mathbb{R}^{d}\setminus B_{\varepsilon_{1}}\left(x\right),
\end{cases}
\end{align*}
where the right-hand sides are integrable (for sufficiently large
$M,N\in\mathbb{N}$) over the ranges $\left(z,y,h\right)\in B_{\varepsilon_{1}/2}\left(x\right)\times B_{\varepsilon_{1}}\left(x\right)\times K_{o}\left(W,V,R\right)$
and $\left(z,y,h\right)\in B_{\varepsilon_{1}/2}\left(x\right)\times\left(\mathbb{R}^{d}\setminus B_{\varepsilon_{1}}\left(x\right)\right)\times K_{o}\left(W,V,R\right)$,
respectively.

Thus, observation \eqref{enu:AlmostCharacterizationEtaIsSmooth} is
established, and the proof of part \ref{enu:RegularDirectedPointSufficientCondition}
is complete.
\end{proof}

\section{Wavelet characterizations of regular directed points}

\label{sect:character}We already remarked that Theorem \ref{thm:almost_char}
is not a characterization in the strict sense, since the sufficient
and necessary conditions in terms of wavelet coefficient decay refer
to different sets. In this section, we consider different possible
ways of closing this gap. All of them hinge on inclusions of the type
\[
K_{o}(W',V',R')\subset K_{i}(W,V,R)
\]
which allow to transfer a decay condition of the kind provided by
the necessary criterion in Theorem \ref{thm:almost_char}\ref{enu:RegularDirectedPointNecessaryCondition}
to one of the sort required in Theorem \ref{thm:almost_char}\ref{enu:RegularDirectedPointSufficientCondition}.
This might necessitate changing the wavelet $\psi$.

The following definition contains several distinct conditions which
will be seen to allow wavelet characterizations. 
\begin{defn}
\label{def:ConeApproximationProperties}Let $\xi\in\mathcal{O}\cap S^{d-1}$. 
\begin{enumerate}[label=(\alph*)]
\item Let $\mathcal{V}=(V_{n})_{n\in\mathbb{N}}$ be a family of subsets
$\emptyset\not=V_{n}\Subset\mathcal{O}$ and with $V_{n+1}\subset V_{n}$
for all $n\in\mathbb{N}$.

The dual action has the \textbf{weak $\mathcal{V}$-cone approximation
property at $\xi$} if for all $\xi$-neighborhoods $W\subset S^{d-1}$
and all $R>0$, there exist $n\in\mathbb{N}$ as well as $R'>0$ and
a $\xi$-neighborhood $W'\subset S^{d-1}$ such that 
\begin{equation}
K_{o}(W',V_{n},R')\subset K_{i}(W,V_{n},R).\label{eq:WeakConeApproximationInclusion}
\end{equation}

The dual action has the \textbf{global weak $\mathcal{V}$-cone approximation
property}, if it has the weak-$\mathcal{V}$-cone approximation property
at all $\xi\in\mathcal{O}\cap S^{d-1}$.

\item Let $\emptyset\neq V_{0}\Subset\mathcal{O}$. The dual action has
the \textbf{$V_{0}$-cone approximation property at $\xi$} if for
all $\xi$-neighborhoods $W\subset S^{d-1}$ and all $R>0$ there
are $R'>0$ and a $\xi$-neighborhood $W'\subset S^{d-1}$ such that
\[
K_{o}(W',V_{0},R')\subset K_{i}(W,V_{0},R).
\]

The dual action has the \textbf{global $V_{0}$-cone approximation
property} if it has the $V_{0}$-cone approximation property at $\xi$
for all $\xi\in\mathcal{O}\cap S^{d-1}$.

\end{enumerate}
\end{defn}
\begin{rem*}
Note that the $V_{0}$-cone approximation property is the same as
the weak $\mathcal{V}$-cone approximation property, for the sequence
$\mathcal{V}=(V_{0})_{n\in\mathbb{N}}$. The purpose of the family
$\mathcal{V}$ is to allow the possibility that the family of sets
$\left(V_{n}\right)_{n\in\mathbb{N}}$ can get ``arbitrarily small''.

If we avoid mention of the set $V_{0}$, we also call the $V_{0}$-cone
approximation property the \textbf{strong cone approximation property}
to distinguish it from the weak ($\mathcal{V}$-)cone approximation
property.
\end{rem*}
The following lemma is a direct consequence of the inclusion properties
for $K_{i/o}$ observed in Remark \ref{rem:incl_prop}. 
\begin{lem}
\label{lem:ConeApproximationForSubsets}Assume that the dual action
has the $V_{0}$-cone approximation property at $\xi$. Then for all
open $\emptyset\neq V\subset V_{0}$, all $\xi$-neighborhoods $W\subset S^{d-1}$
and all $R>0$, there exist $R'>0$ and a $\xi$-neighborhood $W'\subset S^{d-1}$
such that 
\[
K_{o}(W',V,R')\subset K_{i}(W,V,R).
\]
In other words, the dual action has the $V$-cone approximation property
at $\xi$ for every open $\emptyset\neq V\subset V_{0}$.

Also, if the inclusion in equation \eqref{eq:WeakConeApproximationInclusion}
is valid for some $n=n_{0}\in\mathbb{N}$, it automatically holds
for all $n\geq n_{0}$.\end{lem}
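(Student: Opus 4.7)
The plan is to apply the monotonicity observations for $K_i$ and $K_o$ recorded in Remark \ref{rem:incl_prop} in a ``sandwich'' fashion, with no further input needed. Recall from that remark that $V\subset V'$ gives $K_o(W',V,R')\subset K_o(W',V',R')$, while $V\supset V'$ gives $K_i(W,V,R)\subset K_i(W,V',R)$, i.e., shrinking the frequency support set $V$ shrinks $K_o$ but enlarges $K_i$. Both of these monotonicity statements pull in the same direction relative to our goal, which is precisely why the lemma is ``a direct consequence''.

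For the first assertion, fix an open $\emptyset\neq V\subset V_0$, a $\xi$-neighborhood $W$ and $R>0$. First I would invoke the $V_0$-cone approximation property at $\xi$ to produce a $\xi$-neighborhood $W'$ and $R'>0$ satisfying $K_o(W',V_0,R')\subset K_i(W,V_0,R)$. The key step is then to sandwich this inclusion between the two monotonicity inclusions coming from $V\subset V_0$: namely $K_o(W',V,R')\subset K_o(W',V_0,R')$ on the left, and $K_i(W,V_0,R)\subset K_i(W,V,R)$ on the right. Concatenating these three inclusions yields $K_o(W',V,R')\subset K_i(W,V,R)$, which is exactly the $V$-cone approximation property at $\xi$ with the same $W'$ and $R'$.

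For the second assertion about the weak $\mathcal{V}$-cone approximation, suppose \eqref{eq:WeakConeApproximationInclusion} holds at $n=n_0$, i.e., $K_o(W',V_{n_0},R')\subset K_i(W,V_{n_0},R)$. For any $n\geq n_0$, the monotonicity assumption $V_{n+1}\subset V_n$ in Definition \ref{def:ConeApproximationProperties} gives $V_n\subset V_{n_0}$, so we can repeat the same sandwich argument with $V_{n_0}$ in place of $V_0$ and $V_n$ in place of $V$. This produces $K_o(W',V_n,R')\subset K_i(W,V_n,R)$ with the same choice of $W'$ and $R'$, proving that the inclusion persists for all $n\geq n_0$.

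There is no genuine obstacle in either step; the only thing that needs care is tracking which direction each monotonicity statement goes, but both $V\mapsto K_o$ (increasing) and $V\mapsto K_i$ (decreasing) cooperate so that shrinking $V$ inside $V_0$ both shrinks the left-hand side and enlarges the right-hand side of the desired inclusion.
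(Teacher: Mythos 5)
Your proposal is correct and is essentially identical to the paper's own proof: the paper also produces $W',R'$ from the $V_0$-cone approximation property and concludes via the chain $K_o(W',V,R')\subset K_o(W',V_0,R')\subset K_i(W,V_0,R)\subset K_i(W,V,R)$, handling the second assertion analogously using $V_n\subset V_{n_0}$ for $n\geq n_0$. Both monotonicity directions are applied correctly.
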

\begin{proof}
Simply note $K_{o}\left(W',V,R'\right)\subset K_{o}\left(W',V_{0},R'\right)\subset K_{i}\left(W,V_{0},R\right)\subset K_{i}\left(W,V,R\right)$
for suitable $R',W'$. The proof regarding the inclusion \eqref{eq:WeakConeApproximationInclusion}
is completely analogous (because the sequence $\mathcal{V}=\left(V_{n}\right)_{n\in\mathbb{N}}$
is nonincreasing).\end{proof}
\begin{rem}
The different versions of the cone approximation property can be motivated
as follows: Our aim is to characterize regular directed points by
the decay of wavelet coefficients corresponding to ``scales'' contained
in the set $K_{o}(W,V,R)$. As pointed out above, the inclusion property
$K_{o}(W',V',R')\subset K_{i}(W,V,R)$ is a means of closing the gap
between necessary and sufficient conditions in Theorem \ref{thm:almost_char}.
The weak cone approximation property is tailored to guarantee multiple
wavelet characterizations, and it can be read as a mathematically
precise way of saying that the system using multiple wavelets sucessfully
adapts to varying cone apertures.

By contrast, the strong cone approximation property can be understood
informally as an \emph{increase of the angular resolution in the wavelet
system with decreasing scales} (i.e., increasing frequencies). This
phenomenon is intricately linked to anisotropy, and it has been commented
on in the context of shearlets, curvelets, etc. Our definition provides
a rigourous and workable description of this property.

The cone approximation property is, however (at least potentially),
\emph{direction dependent}; currently, it seems possible that the
set $V_{0}$ may be required to vary nontrivially across $\mathcal{O}\cap S^{d-1}$.
Thus, if one is interested in simultaneously characterizing all directed
points $(x,\xi)$, with arbitrary $\xi\in\mathcal{O}\cap S^{d-1}$
using a \emph{single} wavelet, the global strong cone approximation
property is the natural prerequisite. Note that similar considerations
apply to the other technical condition, microlocal admissibility,
which we also need to control globally.

We finally remark that Lemmas \ref{lem:SingleOrbitMicrolocalAdmissibility}
and \ref{lem:SingleOrbitConeApproximationProperty} (will) show that
the pathological behaviour that $V_{0}$ has to vary with $\xi\in\mathcal{O}\cap S^{d-1}$
can not occur in the single orbit case.
\end{rem}
The following lemma gives a rigorous formulation of the fact that
the $V_{0}$-cone approximation property can indeed only hold for
``anisotropic groups'', as the above remark already indicated. It
shows that in order to fulfil the strong cone approximation property,
the group is not allowed to contain nontrivial positive scalar dilations.
\begin{lem}
\label{lem:anisotropy_necessary}Assume that $\mathcal{O}=H^{T}\xi_{0}$
is a single open orbit and let $\emptyset\neq V_{0}\Subset\mathcal{O}$.
Assume that $H\cap\left(0,\infty\right)\cdot{\rm id}$ is nontrivial
and that $d\geq2$.

Then the dual action does \emph{not} have the (strong) $V_{0}$-cone
approximation property at $\xi$, for any $\xi\in\mathcal{O}\cap S^{d-1}$.\end{lem}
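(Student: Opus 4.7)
The strategy is to negate the $V_{0}$-cone approximation property at $\xi$ directly: for a suitable $\xi$-neighborhood $W$ and any $R>0$, I will produce a sequence $(h_{n})_{n\in\mathbb{N}}\subset H$ such that no $h_{n}$ lies in $K_{i}(W,V_{0},R)$, while each $h_{n}$ belongs to $K_{o}(W',V_{0},R')$ for any prescribed $W'$ and $R'$, provided $n$ is large enough. The witnesses will be $h_{n}:=r^{-n}h_{0}$, where $r\cdot\mathrm{id}\in H$ with $r>1$ (after replacing $r$ by $1/r$ if needed) and $h_{0}\in H$ is chosen so that $h_{0}^{T}\xi\in V_{0}$; such $h_{0}$ exists because $\mathcal{O}=H^{T}\xi_{0}$ is a single orbit and $V_{0}$ is a nonempty open subset of $\mathcal{O}$. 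The heuristic is that isotropic dilations act only radially on frequencies, leaving directional content invariant, and hence cannot provide the angular refinement demanded by the strong cone approximation property.

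First I would introduce the direction set
\[
A \;:=\; \left\{ h_{0}^{-T}v / |h_{0}^{-T}v| \with v\in V_{0} \right\} \;\subset\; S^{d-1},
\]
and record three facts: (i) $\xi\in A$, by the choice of $h_{0}$; (ii) $A$ is open in $S^{d-1}$, since $h_{0}^{-T}$ is a linear isomorphism sending the open set $V_{0}$ into $\mathcal{O}\subset\mathbb{R}^{d}\setminus\{0\}$ (using $H^{T}$-invariance of $\mathcal{O}$ together with $0\notin\mathcal{O}$ from Lemma \ref{lem:related_to_assumptions}), and radial projection $\mathbb{R}^{d}\setminus\{0\}\to S^{d-1}$ is an open map; (iii) $A\neq\{\xi\}$, because otherwise $h_{0}^{-T}V_{0}$ would lie on the single ray $\mathbb{R}^{+}\xi$, which contradicts the fact that $V_{0}$ is open in $\mathbb{R}^{d}$ with $d\geq 2$. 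Picking any $\eta\in A\setminus\{\xi\}$, I then choose $W$ to be an open $\xi$-neighborhood in $S^{d-1}\cap\mathcal{O}$ small enough to exclude $\eta$ (possible because $\mathcal{O}$ is open with $\xi\in\mathcal{O}$), and fix an arbitrary $R>0$.

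The remainder is a short scaling calculation. From $h_{n}^{-T}V_{0}=r^{n}h_{0}^{-T}V_{0}$, directions are preserved while radii scale by $r^{n}$. Taking $v^{*}\in V_{0}$ with $h_{0}^{-T}v^{*}/|h_{0}^{-T}v^{*}|=\eta$, we get $h_{n}^{-T}v^{*}\notin C(W)\supset C(W,R)$, so $h_{n}\notin K_{i}(W,V_{0},R)$ for every $n$. Conversely, with $v_{0}:=h_{0}^{T}\xi\in V_{0}$, we have $h_{n}^{-T}v_{0}=r^{n}\xi\in C(W',R')$ as soon as $r^{n}>R'$ (since $\xi\in W'$), giving $h_{n}\in K_{o}(W',V_{0},R')$. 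Since $W'$ and $R'$ were arbitrary, $K_{o}(W',V_{0},R')\not\subset K_{i}(W,V_{0},R)$, contradicting the cone approximation property. The only genuinely nonmechanical step is item (iii) above: it is precisely the combination of openness of $V_{0}$ with $d\geq 2$ (no room for $h_{0}^{-T}V_{0}$ inside a single ray) that rules out the degenerate case, and this is where the geometric essence lives --- an isotropic dilation in $H$ cannot refine angular resolution as scales shrink.
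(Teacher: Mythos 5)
Your proposal is correct and follows essentially the same route as the paper's proof: both pick $h_{0}\in H$ with $h_{0}^{T}\xi\in V_{0}$, observe that openness of $V_{0}$ together with $d\geq2$ forces $h_{0}^{-T}V_{0}$ to span at least two distinct directions, and then use the nontrivial scalar subgroup to push $h_{0}^{-T}V_{0}$ to arbitrarily large radii — landing in $K_{o}(W',V_{0},R')$ for any $W',R'$ while the fixed angular spread keeps it out of $K_{i}(W,V_{0},R)$ for a sufficiently small $W$. The only differences are cosmetic (a sequence $r^{-n}h_{0}$ versus a single small scalar $\alpha$, and excluding one direction $\eta$ versus the paper's triangle-inequality contradiction $s<s$).
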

\begin{proof}
Assume towards a contradiction that the dual action has the $V_{0}$-cone
approximation property at $\xi$ for some $\xi\in\mathcal{O}\cap S^{d-1}$.

Let $\eta\in V_{0}\subset\mathcal{O}$ be arbitrary. Because $\mathcal{O}=H^{T}\xi_{0}$
is a single orbit, we have $\eta\in\mathcal{O}=H^{T}\xi$ and thus
$h_{\xi}^{T}\xi=\eta\in V_{0}$ for some $h_{\xi}\in H$.

Define $\Phi:\mathbb{R}^{d}\setminus\left\{ 0\right\} \to S^{d-1},x\mapsto\frac{x}{\left|x\right|}$
and note that $\Phi\left(rx\right)=\Phi\left(x\right)$ for all $r\in\mathbb{R}^{\ast}$.
If $\Phi\left(\left[h_{\xi}^{-T}V_{0}\right]\setminus\left\{ 0\right\} \right)$
had at most one element $x\in\mathbb{R}^{d}$, this would imply 
\[
h_{\xi}^{-T}V_{0}\subset\mathbb{R}x
\]
in contradiction to the fact that $h_{\xi}^{-T}V_{0}$ is a nonempty
open subset of $\mathbb{R}^{d}$ with $d\geq2$. Hence, there are
$x,y\in\Phi\left(\left[h_{\xi}^{-T}V_{0}\right]\setminus\left\{ 0\right\} \right)$
with $x\neq y$. Let $s:=\left|x-y\right|>0$.

Let $R:=1$ and $W:=B_{s/2}\left(\xi\right)\cap S^{d-1}$. By assumption,
the dual action has the $V_{0}$-cone approximation property at $\xi$,
which yields a $\xi$-neighborhood $W'\subset S^{d-1}$ and some $R'>0$
with 
\[
K_{o}\left(W',V_{0},R'\right)\subset K_{i}\left(W,V_{0},R\right).
\]

By assumption on $H$, the subgroup $H\cap\left(0,\infty\right)\cdot{\rm id}$
is nontrivial. This ensures existence of some $\alpha\in\left(0,\smash{\left(1+R'\right)^{-1}}\right)$
with $\alpha\cdot{\rm id}\in H$. But $\xi\in h_{\xi}^{-T}V_{0}$
by choice of $h_{\xi}$ and furthermore $\xi\in W'$, which implies
$\alpha^{-1}\xi\in C\left(W'\right)$. Finally, $\xi\in S^{d-1}$
implies $\left|\alpha^{-1}\xi\right|=\alpha^{-1}>1+R'>R'$. All in
all, we arrive at 
\[
\alpha^{-1}\xi\in\left(\alpha h_{\xi}\right)^{-T}V_{0}\cap C\left(W',R'\right)\neq\emptyset.
\]
Hence, 
\[
\alpha h_{\xi}\in K_{o}\left(W',V_{0},R'\right)\subset K_{i}\left(W,V_{0},R\right).
\]
By definition of $K_{i}\left(W,V_{0},R\right)$, this means 
\[
\alpha^{-1}\cdot h_{\xi}^{-T}V_{0}=\left(\alpha h_{\xi}\right)^{-T}V_{0}\subset C\left(W,R\right)\subset C\left(W\right).
\]
Using the definition of $C\left(W\right)$ and of $\Phi$, we see
\[
x,y\in\Phi\left(\left[h_{\xi}^{-T}V_{0}\right]\setminus\left\{ 0\right\} \right)=\Phi\left(\left[\alpha^{-1}\cdot h_{\xi}^{-T}V_{0}\right]\setminus\left\{ 0\right\} \right)\subset W=B_{s/2}\left(\xi\right)\cap S^{d-1}.
\]
Thus, $s=\left|x-y\right|\leq\left|x-\xi\right|+\left|\xi-y\right|<\frac{s}{2}+\frac{s}{2}=s$,
a contradiction.
\end{proof}
In the case that $\mathcal{O}=H^{T}\xi_{0}$ is a single open orbit,
we saw in Lemma \ref{lem:SingleOrbitMicrolocalAdmissibility} that
it suffices to check the $V$-microlocal admissibility at a single
$\xi_{1}\in\mathcal{O}$. The same is true for the cone approximation
properties, as the following lemma shows. 
\begin{lem}
\label{lem:SingleOrbitConeApproximationProperty}Assume that $\mathcal{O}=H^{T}\xi_{0}$
is a single open orbit and let $\emptyset\neq V_{0}\Subset\mathcal{O}$.

If the dual action has the $V_{0}$-cone approximation property at
$\xi_{1}$ for some $\xi_{1}\in\mathcal{O}\cap S^{d-1}$, then the
dual action has the global $V_{0}$-cone approximation property.

Similarly, if the dual action has the weak $\mathcal{V}$-cone approximation
property at $\xi_{1}$ for some $\xi_{1}\in\mathcal{O}\cap S^{d-1}$,
then the dual action has the global weak $\mathcal{V}$-cone approximation
property.\end{lem}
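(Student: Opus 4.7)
The proof will mirror the strategy of Lemma \ref{lem:SingleOrbitMicrolocalAdmissibility}: pick $\xi \in \mathcal{O}\cap S^{d-1}$, write $\xi = h_\xi^T \xi_1$ with $h_\xi \in H$, and transport the hypothesis at $\xi_1$ to $\xi$ via left multiplication by $h_\xi^{-1}$. Given a $\xi$-neighborhood $W\subset S^{d-1}$ and $R>0$, the goal is to produce a $\xi$-neighborhood $W''$ and $R''>0$ satisfying $K_o(W'',V_0,R'') \subset K_i(W,V_0,R)$. The plan is to sandwich this inclusion between three simpler ones, the middle one being the hypothesis.

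\emph{Step 1 (transport the target to $\xi_1$).} I will produce a $\xi_1$-neighborhood $\widetilde W\subset S^{d-1}$ and $\widetilde R>0$ such that
\[
h_\xi^{-1}\cdot K_i(\widetilde W,V_0,\widetilde R) \,\subset\, K_i(W,V_0,R).
\]
Unravelling definitions, if $g^{-T}V_0\subset C(\widetilde W,\widetilde R)$ and $v\in V_0$ with $g^{-T}v=r\widetilde w$, $\widetilde w\in\widetilde W$, $r>\widetilde R$, then $(h_\xi^{-1}g)^{-T}v = h_\xi^T g^{-T}v = r\bigl|h_\xi^T\widetilde w\bigr|\cdot h_\xi^T\widetilde w\big/\bigl|h_\xi^T\widetilde w\bigr|$. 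Since $h_\xi^T\xi_1/|h_\xi^T\xi_1| = \xi/|\xi| = \xi \in W$, continuity of $\eta\mapsto h_\xi^T\eta/|h_\xi^T\eta|$ at $\xi_1$ allows shrinking $\widetilde W$ so that the directional factor lies in $W$. Since $\widetilde W\subset S^{d-1}$ is close to $\xi_1$, the injectivity and continuity of $h_\xi^T$ give a uniform lower bound $|h_\xi^T\widetilde w|\geq c>0$, so choosing $\widetilde R>R/c$ yields the norm condition.

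\emph{Step 2 (apply the hypothesis at $\xi_1$).} By the assumed $V_0$-cone approximation property at $\xi_1$, applied to $(\widetilde W,\widetilde R)$, there exist a $\xi_1$-neighborhood $\widetilde W'$ and $\widetilde R'>0$ with $K_o(\widetilde W',V_0,\widetilde R') \subset K_i(\widetilde W,V_0,\widetilde R)$.

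\emph{Step 3 (transport back to $\xi$).} Finally, I will produce $W''$ and $R''$ with $K_o(W'',V_0,R'') \subset h_\xi^{-1}\cdot K_o(\widetilde W',V_0,\widetilde R')$. This is exactly the computation already carried out in the proof of Lemma \ref{lem:SingleOrbitMicrolocalAdmissibility} (equation \eqref{eq:SingleOrbitMicrolocalAdmissibilityFundamentalInclusion}): pick $\gamma\in(0,1)$ small enough that $w\mapsto w/|w|$ sends $B_\gamma(\xi_1)$ into $\widetilde W'$, set $W'' := \bigl[h_\xi^T\cdot B_\gamma(\xi_1)\bigr]\cap S^{d-1}$ (a $\xi$-neighborhood since $h_\xi^T\xi_1 = \xi$) and $R'' := \|h_\xi\|\widetilde R'$; the calculation there shows $h_\xi h\in K_o(\widetilde W',V_0,\widetilde R')$ for all $h\in K_o(W'',V_0,R'')$. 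Chaining Steps 1--3 gives the required inclusion at $\xi$. The case of global weak $\mathcal V$-cone approximation is identical: Steps 1 and 3 depend only on $h_\xi$ (via continuity and the operator norm) and do not involve the particular set $V_n$ used, so applying the $\xi_1$-hypothesis in Step 2 produces some index $n\in\mathbb{N}$ together with $\widetilde W',\widetilde R'$ for $V_n$, and the outer Steps work verbatim with the same $n$.

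The argument is essentially routine once the parameter-transport geometry is set up; the only point requiring care is keeping track of the correct direction of each inclusion (the target inclusion runs $K_o\subset K_i$, whereas each transport step is just a change of variables on $H$), and ensuring that the auxiliary neighborhood $\widetilde W$ in Step 1 can be shrunk independently of $V_0$ so that both the directional and the norm conditions are met simultaneously. Note that no properness or admissibility assumption beyond what underlies the definitions is needed for this lemma.
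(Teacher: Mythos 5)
Your proposal is correct and follows essentially the same route as the paper: write $\xi=h_\xi^T\xi_1$, transport the target cone data to $\xi_1$ via $h_\xi^{-T}$ (the paper's $W_1=[h_\xi^{-T}B_{\delta_{\xi,W}}(\xi)]\cap S^{d-1}$, $R_1=R\|h_\xi^{-1}\|$), apply the hypothesis there, and transport back exactly as in equation \eqref{eq:SingleOrbitMicrolocalAdmissibilityFundamentalInclusion}, noting that the two transport steps are independent of the index $n$ so the weak case goes through verbatim. The paper merely verifies the outer inclusion at the level of cones ($h_\xi^T C(W_1,R_1)\subset C(W,R)$) rather than as an inclusion of subsets of $H$, which is a cosmetic difference.
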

\begin{proof}
In view of the remark following Definition \ref{def:ConeApproximationProperties},
it suffices to consider the weak case. We start with the following
observation: For each $\xi\in S^{d-1}$ and each $\xi$-neighborhood
$W\subset S^{d-1}$, there is some $\delta_{\xi,W}>0$ such that $\frac{v}{\left|v\right|}$
is a (well-defined) element of $W$ for all $v\in B_{\delta_{\xi,W}}\left(\xi\right)$.
To see this, observe that $B_{1}\left(\xi\right)\subset\mathbb{R}^{d}\setminus\left\{ 0\right\} $
is open and that the map 
\[
\Phi:B_{1}\left(\xi\right)\to S^{d-1},v\mapsto\frac{v}{\left|v\right|}
\]
is continuous with $\Phi\left(\xi\right)=\xi\in W$. Hence, $\Phi^{-1}\left(W\right)\subset B_{1}\left(\xi\right)$
is open with $\xi\in\Phi^{-1}\left(W\right)$, which implies the existence
of $\delta_{\xi,W}$.

Now, assume that the dual action has the weak $\mathcal{V}$-cone
approximation property at $\xi_{1}$. Let $\xi\in\mathcal{O}\cap S^{d-1}$
be arbitrary. Choose an arbitrary $\xi$-neighborhood $W\subset S^{d-1}$
and some $R>0$. By assumption, $\mathcal{O}$ is a single orbit,
so that $\xi=h_{\xi}^{T}\xi_{1}$ holds for some $h_{\xi}\in H$.

Let $R_{1}:=R\cdot\left\Vert \vphantom{h_{\xi}}\smash{h_{\xi}^{-1}}\right\Vert $
and $W_{1}:=\left[h_{\xi}^{-T}\cdot B_{\delta_{\xi,W}}\left(\xi\right)\right]\cap S^{d-1}$.
Note that $W_{1}$ is indeed a neighborhood of $\xi_{1}$. The weak
cone-approximation property yields some $R_{1}'>0$, a $\xi_{1}$-neighborhood
$W_{1}'\subset S^{d-1}$ as well as $n\in\mathbb{N}$ with 
\[
K_{o}\left(W_{1}',V_{n},R_{1}'\right)\subset K_{i}\left(W_{1},V_{n},R_{1}\right).
\]

Finally, let $R':=\left\Vert h_{\xi}\right\Vert \cdot R_{1}'$ and
$W':=\left[h_{\xi}^{T}\cdot B_{\delta_{\xi_{1},W_{1}'}}\left(\xi_{1}\right)\right]\cap S^{d-1}$.
Observe that $W'$ is indeed a $\xi$-neighborhood. It remains to
prove the inclusion 
\[
K_{o}\left(W',V_{n},R'\right)\subset K_{i}\left(W,V_{n},R\right).
\]
To this end, let $h\in K_{o}\left(W',V_{n},R'\right)$ be arbitrary.
By definition, this yields some $v\in V_{n}$ with $h^{-T}v\in C\left(W',R'\right)$
and hence $\left|h^{-T}v\right|>R'$ as well as 
\[
h^{-T}v=r\cdot w'=r\cdot h_{\xi}^{T}w\qquad\text{ for some }w\in B_{\delta_{\xi_{1},W_{1}'}}\left(\xi_{1}\right),
\]
and thus $\left(h_{\xi}h\right)^{-T}v=r\cdot w$.

On the one hand, we can use submultiplicativity of the norm to derive
\[
\left\Vert h_{\xi}\right\Vert \cdot R_{1}'=R'<\left|h^{-T}v\right|=\left|h_{\xi}^{T}\cdot h_{\xi}^{-T}h^{-T}v\right|\leq\left\Vert h_{\xi}\right\Vert \cdot\left|\left(h_{\xi}h\right)^{-T}v\right|
\]
and on the other hand, the choice of $\delta_{\xi_{1},W_{1}'}$ implies
\[
\left(h_{\xi}h\right)^{-T}v=r\left|w\right|\cdot\frac{w}{\left|w\right|}\in\left(0,\infty\right)\cdot W_{1}'\subset C\left(W_{1}'\right).
\]
Together, these considerations show 
\[
\left(h_{\xi}h\right)^{-T}v\in\left[\left(h_{\xi}h\right)^{-T}\cdot V_{n}\right]\cap C\left(W_{1}',R_{1}'\right)\neq\emptyset
\]
and hence $h_{\xi}h\in K_{o}\left(W_{1}',V_{n},R_{1}'\right)\subset K_{i}\left(W_{1},V_{n},R_{1}\right)$,
which implies 
\[
h_{\xi}^{-T}h^{-T}V_{n}=\left(h_{\xi}h\right)^{-T}V_{n}\subset C\left(W_{1},R_{1}\right)\qquad\text{ and hence }\qquad h^{-T}V_{n}\subset h_{\xi}^{T}\cdot C\left(W_{1},R_{1}\right),
\]
so that it suffices to prove $h_{\xi}^{T}\cdot C\left(W_{1},R_{1}\right)\subset C\left(W,R\right)$.

To this end, let $x\in C\left(W_{1},R_{1}\right)$ be arbitrary. This
means $\left|x\right|>R_{1}$ and $x=r\cdot w$ for some $r>0$ and
$w\in W_{1}=\left[h_{\xi}^{-T}\cdot B_{\delta_{\xi,W}}\left(\xi\right)\right]\cap S^{d-1}$.
Hence, $v:=h_{\xi}^{T}w\in B_{\delta_{\xi,W}}\left(\xi\right)$, so
that the definition of $\delta_{\xi,W}$ yields $\frac{v}{\left|v\right|}\in W$
and thus 
\[
h_{\xi}^{T}x=r\cdot h_{\xi}^{T}w=r\left|v\right|\cdot\frac{v}{\left|v\right|}\in C\left(W\right).
\]
Finally, 
\[
R\cdot\left\Vert \vphantom{h_{\xi}}\smash{h_{\xi}^{-1}}\right\Vert =R_{1}<\left|x\right|=\left|\vphantom{h_{\xi}}\smash{h_{\xi}^{-T}\cdot h_{\xi}^{T}x}\right|\leq\left\Vert \vphantom{h_{\xi}}\smash{h_{\xi}^{-T}}\right\Vert \cdot\left|\vphantom{h_{\xi}}\smash{h_{\xi}^{T}}x\right|=\left\Vert \vphantom{h_{\xi}}\smash{h_{\xi}^{-1}}\right\Vert \cdot\left|\vphantom{h_{\xi}}\smash{h_{\xi}^{T}}x\right|,
\]
which implies $\left|\vphantom{h_{\xi}}\smash{h_{\xi}^{T}}x\right|>R$
and thus $h_{\xi}^{T}x\in C\left(W,R\right)$. As $x\in C\left(W_{1},R_{1}\right)$
was arbitrary, the proof is complete.
\end{proof}
We can now formulate our main result, a wavelet characterization of
the wavefront set.
\begin{thm}
\label{thm:char_wfset}Let $H\leq{\rm GL}\left(\mathbb{R}^{d}\right)$
be a matrix group fulfilling the assumptions \ref{assume:proper_dual}.
\begin{enumerate}[label=(\alph*)]
\item \label{enu:WaveFrontSetCharacterizationWeakConeProperty} Assume
that the dual action of $H$ has the weak $\mathcal{V}$-cone approximation
at $\xi\in\mathcal{O}\cap S^{d-1}$, for some nonincreasing family
$\mathcal{V}=(V_{n})_{n\in\mathbb{N}}$ with $\emptyset\not=V_{n}\Subset\mathcal{O}$,
and in addition that the dual action is $V_{n_{0}}$-microlocally
admissible in direction $\xi$ for some $n_{0}\in\mathbb{N}$.

Furthermore, assume the existence of a family $(\psi_{n})_{n\in\mathbb{N}}$
of admissible Schwartz functions satisfying ${\rm supp}\left(\smash{\widehat{\psi}_{n}}\right)\subset V_{n}$.\\
Then, for each $x\in\mathbb{R}^{d}$, the following are equivalent:
\begin{enumerate}[label=(\arabic*)]
\item \label{enu:FundamentalEquivalenceRegularDirectedPoint}$\left(x,\xi\right)$
is a regular directed point of $u$,
\item \label{enu:FundamentalEquivalenceForAll}there is some $\xi$-neighborhood
$W\subset S^{d-1}$, some $R>0$ and some neighborhood $U\subset\mathbb{R}^{d}$
of $x$, as well as some $n_{1}\in\mathbb{N}$ such that for \emph{all}
$n\geq n_{1}$, the following holds:
\[
\qquad\qquad\forall N\in\mathbb{N}\,\exists C_{N}>0\,\forall y\in U\,\forall h\in K_{o}(W,V_{n},R)~:~|W_{\psi_{n}}u(y,h)|\le C_{N}\cdot\left\Vert h\right\Vert ^{N},
\]

\item \label{enu:FundamentalEquivalenceExistence}there is \emph{some} $n\geq n_{0}$,
a neighborhood $U$ of $x$, some $R>0$ and a $\xi$-neighborhood
$W\subset S^{d-1}$ such that 
\[
\qquad\qquad\forall N\in\mathbb{N}\,\exists C_{N}>0\,\forall y\in U\,\forall h\in K_{o}(W,V_{n},R)~:~|W_{\psi_{n}}u(y,h)|\le C_{N}\cdot\left\Vert h\right\Vert ^{N}.
\]

\end{enumerate}

If the dual action is globally $V_{n_{0}}$-microlocally admissible
for some $n_{0}\in\mathbb{N}$ and has the global weak $\mathcal{V}$-cone
approximation property for some nonincreasing family $\mathcal{V}=(V_{n})_{n\in\mathbb{N}}$,
then the same sequence of wavelets $(\psi_{n})_{n\in\mathbb{N}}$
can be used to simultaneously characterize (in the sense described
above) all regular directed points $(x,\xi)$ with $\xi\in\mathcal{O}\cap S^{d-1}$
arbitrary.

\item \label{enu:WaveFrontSetCharacterizationConeProperty}Assume that the
dual action is $V$-microlocally admissible in direction $\xi\in\mathcal{O}\cap S^{d-1}$
and has the $V$-cone approximation property at $\xi$ for some $\emptyset\neq V\Subset\mathcal{O}$.
Then for all admissible $\psi\in\mathcal{S}(\mathbb{R}^{d})$ with
${\rm supp}\left(\smash{\widehat{\psi}}\right)\subset V$ and all
$x\in\mathbb{R}^{d}$, the following equivalence holds, for all $u\in\mathcal{S}'(\mathbb{R}^{d})$:\\
 $(x,\xi)$ is a regular directed point of $u$ iff there exists a
neighborhood $U$ of $x$, some $R>0$ and a $\xi$-neighborhood $W\subset S^{d-1}$
such that 
\[
\forall N\in\mathbb{N}\,\exists C_{N}>0\,\forall y\in U\,\forall h\in K_{o}(W,V,R)~:~|W_{\psi}u(y,h)|\le C_{N}\cdot\left\Vert h\right\Vert ^{N}.
\]

If the dual action is globally $V$-microlocally admissible and has
the global $V$-cone approximation property, then the same wavelet
$\psi$ can be used to simultaneously characterize all regular directed
points $(x,\xi)$, with $\xi\in\mathcal{O}\cap S^{d-1}$ arbitrary.

\end{enumerate}
\end{thm}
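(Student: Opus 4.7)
The strategy is to use Theorem \ref{thm:almost_char} as the engine and bridge the gap between the sets $K_i$ and $K_o$ appearing in its necessary and sufficient conditions by invoking the (weak) cone approximation property, together with the $V$-monotonicity of the cone-affiliated sets recorded in Remark \ref{rem:incl_prop}.

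For part \ref{enu:WaveFrontSetCharacterizationConeProperty}, the implication ``$\Leftarrow$'' is immediate from Theorem \ref{thm:almost_char}\ref{enu:RegularDirectedPointSufficientCondition}, since $\psi$ is admissible with $\mathrm{supp}(\widehat{\psi})\subset V$ and the dual action is $V$-microlocally admissible in direction $\xi$. For ``$\Rightarrow$'', I apply Theorem \ref{thm:almost_char}\ref{enu:RegularDirectedPointNecessaryCondition} to obtain a neighborhood $U$ of $x$, some $R_{0}>0$, and a $\xi$-neighborhood $W_{0}\subset S^{d-1}$ such that $W_{\psi}u$ decays rapidly on $U\times K_{i}(W_{0},V,R_{0})$. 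The $V$-cone approximation property at $\xi$, applied with $W_{0}$ and $R_{0}$ in the roles of $W$ and $R$, then produces $R'>0$ and a $\xi$-neighborhood $W'\subset S^{d-1}$ with $K_{o}(W',V,R')\subset K_{i}(W_{0},V,R_{0})$, transporting the decay to the $K_{o}$-set demanded on the right-hand side.

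For part \ref{enu:WaveFrontSetCharacterizationWeakConeProperty}, the implication \ref{enu:FundamentalEquivalenceForAll}$\Rightarrow$\ref{enu:FundamentalEquivalenceExistence} is trivial (pick any $n\geq\max(n_{0},n_{1})$). For \ref{enu:FundamentalEquivalenceExistence}$\Rightarrow$\ref{enu:FundamentalEquivalenceRegularDirectedPoint}: since $\mathcal{V}$ is nonincreasing we have $V_{n}\subset V_{n_{0}}$ for every $n\geq n_{0}$, so Remark \ref{rem:MicrolocalAdmissiblityVCanBeShrunk} upgrades $V_{n_{0}}$-microlocal admissibility at $\xi$ to $V_{n}$-microlocal admissibility; as $\psi_{n}$ is admissible with $\mathrm{supp}(\widehat{\psi_{n}})\subset V_{n}$, Theorem \ref{thm:almost_char}\ref{enu:RegularDirectedPointSufficientCondition} directly yields regularity of $(x,\xi)$. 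The bulk of the work is therefore the implication \ref{enu:FundamentalEquivalenceRegularDirectedPoint}$\Rightarrow$\ref{enu:FundamentalEquivalenceForAll}. Applying Theorem \ref{thm:almost_char}\ref{enu:RegularDirectedPointNecessaryCondition} with $V:=V_{n_{0}}$ supplies $U,W_{0},R_{0}$ (independent of the wavelet) and, via the stronger ``we even have'' clause, decay of $W_{\psi_{n}}u$ on $U\times K_{i}(W_{0},\widehat{\psi_{n}}^{-1}(\mathbb{C}\setminus\{0\}),R_{0})$ for every $n\geq n_{0}$. Because $\widehat{\psi_{n}}^{-1}(\mathbb{C}\setminus\{0\})\subset V_{n}$, the $V$-monotonicity of $K_{i}$ from Remark \ref{rem:incl_prop} upgrades this to decay on $U\times K_{i}(W_{0},V_{n},R_{0})$, still for every $n\geq n_{0}$. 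The weak $\mathcal{V}$-cone approximation at $\xi$, applied with $W=W_{0}$ and $R=R_{0}$, now yields some $n'\in\mathbb{N}$, some $R'>0$ and a $\xi$-neighborhood $W'\subset S^{d-1}$ with $K_{o}(W',V_{n'},R')\subset K_{i}(W_{0},V_{n'},R_{0})$; by the second assertion of Lemma \ref{lem:ConeApproximationForSubsets}, the same inclusion then persists for every $n\geq n'$. Setting $n_{1}:=\max(n_{0},n')$, we obtain the decay on $U\times K_{o}(W',V_{n},R')$ for every $n\geq n_{1}$, which is exactly \ref{enu:FundamentalEquivalenceForAll}. The global statements in both parts follow without additional work, since the constructions of $U,W_{0},R_{0}$ and of the cone approximation parameters are allowed to depend on $\xi$, while the wavelets $(\psi_{n})_{n\in\mathbb{N}}$ (respectively $\psi$) depend only on $\mathcal{V}$ (respectively $V$).

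The main subtlety -- and the reason the necessary clause of Theorem \ref{thm:almost_char} is formulated with the refined version involving $\widehat{\psi}^{-1}(\mathbb{C}\setminus\{0\})$ in place of $V$ -- lies in the opposite monotonicity of $K_{i}$ and $K_{o}$ in the $V$ variable: implication \ref{enu:FundamentalEquivalenceRegularDirectedPoint}$\Rightarrow$\ref{enu:FundamentalEquivalenceForAll} demands decay on the \emph{larger} set $K_{i}(W_{0},V_{n},R_{0})$ rather than on the smaller set $K_{i}(W_{0},V_{n_{0}},R_{0})$ that the first clause alone would deliver. Without the refined clause, one would be forced to reprove Theorem \ref{thm:almost_char} separately for each $\psi_{n}$, whereas its current formulation allows a single application to control the whole family $(\psi_{n})_{n\geq n_{0}}$ uniformly.
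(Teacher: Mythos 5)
Your proposal is correct and follows essentially the same route as the paper's proof: Theorem \ref{thm:almost_char} is the engine, the refined ``we even have'' clause of its part \ref{enu:RegularDirectedPointNecessaryCondition} combined with the $V$-antitonicity of $K_{i}$ from Remark \ref{rem:incl_prop} and Lemma \ref{lem:ConeApproximationForSubsets} bridges the gap to the $K_{o}$-sets, and Remark \ref{rem:MicrolocalAdmissiblityVCanBeShrunk} handles the passage from $V_{n_{0}}$- to $V_{n}$-microlocal admissibility in the converse direction. The only cosmetic difference is that you prove part \ref{enu:WaveFrontSetCharacterizationConeProperty} directly rather than as the constant-sequence special case of part \ref{enu:WaveFrontSetCharacterizationWeakConeProperty}, which changes nothing of substance.
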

\begin{proof}
We first prove part \ref{enu:WaveFrontSetCharacterizationWeakConeProperty}.
For ``\ref{enu:FundamentalEquivalenceRegularDirectedPoint}$\Rightarrow$\ref{enu:FundamentalEquivalenceForAll}'',
assume that $(x,\xi)$ is a regular directed point of $u$. By Theorem
\ref{thm:almost_char}\ref{enu:RegularDirectedPointNecessaryCondition},
there is some neighborhood $U\subset\mathbb{R}^{d}$ of $x$, some
$R>0$ and some $\xi$-neighborhood $W\subset S^{d-1}$, such that
for each admissible $\psi\in\mathcal{S}\left(\mathbb{R}^{d}\right)$
with ${\rm supp}(\widehat{\psi})\subset V_{n_{0}}$, the following
is true:
\begin{equation}
\forall N\in\mathbb{N}\,\exists C_{N}>0\,\forall y\in U\,\forall h\in K_{i}(W,\widehat{\psi}^{-1}\left(\mathbb{C}\setminus\left\{ 0\right\} \right),R)~:~|W_{\psi}u(y,h)|\le C_{N}\cdot\left\Vert h\right\Vert ^{N}.\label{eq:WaveFrontCharacterizationAlmostApplication}
\end{equation}
The weak $\mathcal{V}$-cone approximation property yields some $n_{1}\in\mathbb{N}$,
as well as $R'>0$ and some $\xi$-neighborhood $W'\subset S^{d-1}\cap\mathcal{O}$
such that the inclusion 
\[
K_{o}(W',V_{n_{1}},R')\subset K_{i}(W,V_{n_{1}},R)
\]
is valid. By Lemma \ref{lem:ConeApproximationForSubsets}, this implies
\[
K_{o}\left(W',V_{n},R'\right)\subset K_{i}\left(W,V_{n},R\right)
\]
for all $n\geq n_{1}$. Now let $n\geq\max\left\{ n_{0},n_{1}\right\} $
be arbitrary. This implies ${\rm supp}(\widehat{\psi_{n}})\subset V_{n}\subset V_{n_{0}}$,
so that equation \eqref{eq:WaveFrontCharacterizationAlmostApplication}
is valid for $\psi_{n}$ instead of $\psi$. But because of the inclusion
properties for $K_{i}$ (cf. Remark \ref{rem:incl_prop}), and because
of $\widehat{\psi_{n}}^{-1}\left(\mathbb{C}\setminus\left\{ 0\right\} \right)\subset{\rm supp}(\widehat{\psi_{n}})\subset V_{n}$,
we have
\[
K_{o}\left(W',V_{n},R'\right)\subset K_{i}\left(W,V_{n},R\right)\subset K_{i}\left(W,\widehat{\psi_{n}}^{-1}\left(\mathbb{C}\setminus\left\{ 0\right\} \right),R\right),
\]
which yields the desired decay estimate (with $W'$ for $W$ and $R'$
for $R$). It should be observed that $U,W'$ and $R'$ indeed do
\emph{not} depend on the particular $n\in\mathbb{N}$, as long as
$n\geq\max\left\{ n_{0},n_{1}\right\} $.

The implication ``\ref{enu:FundamentalEquivalenceForAll}$\Rightarrow$\ref{enu:FundamentalEquivalenceExistence}''
is immediate.

Finally, ``\ref{enu:FundamentalEquivalenceExistence}$\Rightarrow$\ref{enu:FundamentalEquivalenceRegularDirectedPoint}''
is a consequence of Theorem \ref{thm:almost_char}\ref{enu:RegularDirectedPointSufficientCondition},
because $V_{n_{0}}$-microlocal admissibility (in direction $\xi$)
implies $V_{n}$-microlocal admissibility (in direction $\xi$) for
$n\geq n_{0}$ (cf. Remark \ref{rem:MicrolocalAdmissiblityVCanBeShrunk}).

Part \ref{enu:WaveFrontSetCharacterizationConeProperty} is a special
case of part \ref{enu:WaveFrontSetCharacterizationWeakConeProperty},
since the $V$-cone approximation property is a special case of the
weak $\mathcal{V}$-cone approximation property, with $\mathcal{V}=\left(V\right)_{n\in\mathbb{N}}$,
which enables us to use $\psi_{n}=\psi$ for all $n\in\mathbb{N}$.

The statements about simultaneous characterizations are clear.
\end{proof}
As a further interesting benefit of the weak cone approximation property,
we note that it also simplifies the verification of microlocal admissibility,
at least in the single orbit case.
\begin{lem}
\label{lem:verify_adm_from_cone}Let $\mathcal{O}=H^{T}\xi_{0}$ be
an open orbit of $H$ with associated compact stabilizers and assume
that the dual action of $H$ has the weak $\mathcal{V}$-cone approximation
property at $\xi\in S^{d-1}\cap\mathcal{O}$ for some (nonincreasing)
family $\mathcal{V}=\left(V_{n}\right)_{n\in\mathbb{N}}$.

Furthermore, assume that there is some $n\in\mathbb{N}$ such that
$V_{n}$ fulfils condition \ref{enu:NormOfInverseEstimateOnKo} of
Definition \ref{defn:micro_regular} at $\xi$, i.e. there exists
a $\xi$-neighborhood $W_{0}\subset S^{d-1}\cap\mathcal{O}$, some
$R_{0}>0$, $\alpha_{1}>0$ and $C>0$ such that 
\[
\left\Vert h^{-1}\right\Vert \le C\cdot\left\Vert h\right\Vert ^{-\alpha_{1}}
\]
holds for all $h\in K_{o}(W_{0},V_{n},R_{0})$.

Then there is some $n_{0}\in\mathbb{N}$ such that the dual action
of $H$ is globally $V_{n}$-microlocally admissible for all $n\geq n_{0}$.\end{lem}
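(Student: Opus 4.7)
My strategy is to reduce, using the single-orbit symmetry, to verifying condition (b) of Definition \ref{defn:micro_regular} at the single direction $\xi$ for one specific $V_{m_0}$ with $m_0\geq n$; the cone approximation together with admissibility then delivers (b).

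By Remark \ref{rem:MicrolocalAdmissiblityVCanBeShrunk}, condition (a) at $\xi$ for $V_n$ (with parameters $W_0,R_0,\alpha_1,C$) automatically propagates to condition (a) at $\xi$ for every $V_m$ with $m\geq n$, using the same parameters. If I can exhibit some $m_0\geq n$ for which condition (b) also holds at $\xi$ for $V_{m_0}$, then $V_{m_0}$-microlocal admissibility at $\xi$ is established; a second application of Remark \ref{rem:MicrolocalAdmissiblityVCanBeShrunk} extends this to $V_m$-microlocal admissibility at $\xi$ for all $m\geq m_0$, and Lemma \ref{lem:SingleOrbitMicrolocalAdmissibility} globalizes the single-direction statement to every direction in $\mathcal{O}\cap S^{d-1}$. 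Thus $n_0:=m_0$ works and the task collapses to producing such an $m_0$.

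Feeding the pair $(W_0,R_0)$ from (a) into the weak $\mathcal{V}$-cone approximation at $\xi$ (invoking Lemma \ref{lem:ConeApproximationForSubsets} to arrange $m_0\geq n$) furnishes $m_0\geq n$, a $\xi$-neighborhood $W_0'\subset S^{d-1}$, and $R_0'>0$ with $K_o(W_0',V_{m_0},R_0')\subset K_i(W_0,V_{m_0},R_0)$. Fix a Schwartz function $\psi$ with $\widehat{\psi}\in C_c^{\infty}(V_{m_0})$ nontrivial and normalized to be admissible, which is available in the single-orbit case (see the discussion following Assumption \ref{assume:proper_dual}). For any $s>d$, the cone integral $\int_{C(W_0,R_0)}|\xi|^{-s}\,d\xi$ is finite. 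Multiplying its integrand by the admissibility identity $1=\int_H|\widehat{\psi}(h^T\xi)|^2\,dh$ (valid on $\mathcal{O}\supset C(W_0,R_0)$), applying Tonelli, substituting $\eta=h^T\xi$ (Jacobian $|\det h|^{-1}$), and using ${\rm supp}(\widehat{\psi})\subset V_{m_0}$ yields
\[
\int_{C(W_0,R_0)}|\xi|^{-s}\,d\xi \;=\; \int_H |\det h|^{-1}\int_{V_{m_0}\cap h^TC(W_0,R_0)}|h^{-T}\eta|^{-s}|\widehat{\psi}(\eta)|^2\,d\eta\,dh.
\]
On $K_i(W_0,V_{m_0},R_0)$ the defining inclusion $h^{-T}V_{m_0}\subset C(W_0,R_0)$ rewrites as $V_{m_0}\subset h^TC(W_0,R_0)$, so the inner integral extends over the full $V_{m_0}$; combining condition (a) (in the form $|h^{-T}\eta|^{-s}\geq C^{-s}\|h\|^{s\alpha_1}|\eta|^{-s}$) with Hadamard's inequality $|\det h|^{-1}\geq \|h\|^{-d}$ gives the pointwise lower bound $C^{-s}A_s\|h\|^{s\alpha_1-d}$ on the integrand, where $A_s:=\int_{V_{m_0}}|\eta|^{-s}|\widehat{\psi}(\eta)|^2\,d\eta\in(0,\infty)$ (finite because $|\eta|$ is bounded away from $0$ on $\overline{V_{m_0}}\subset\mathcal{O}\subset\mathbb{R}^d\setminus\{0\}$, and positive because $\psi\not\equiv 0$). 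Restricting the outer integral to $K_i(W_0,V_{m_0},R_0)$ and choosing $s>\max\{d,d/\alpha_1\}$ makes $\alpha_2:=s\alpha_1-d$ positive and gives $\int_{K_i(W_0,V_{m_0},R_0)}\|h\|^{\alpha_2}\,dh<\infty$; the inclusion $K_o(W_0',V_{m_0},R_0')\subset K_i(W_0,V_{m_0},R_0)$ then yields (b) at $\xi$ for $V_{m_0}$ with parameters $W_0',R_0'$.

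The main obstacle is identifying the correct envelope to integrate. The Fourier-side cone integral $\int_{C(W_0,R_0)}|\xi|^{-s}\,d\xi$ is the natural candidate because it converges for $s>d$, but turning it into a group integral that dominates $\int\|h\|^{\alpha_2}\,dh$ requires admissibility (which converts the constant $1$ into a group integral) \emph{together} with the cone-approximation inclusion: the latter is exactly what allows the awkward domain $V_{m_0}\cap h^TC(W_0,R_0)$ to be replaced by the full $V_{m_0}$ for $h\in K_i$, so that the $\eta$-integral does not collapse and the resulting lower bound in $\|h\|$ is uniform.
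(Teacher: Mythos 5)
Your proof is correct, and its outer structure (shrinking $V$ via Remark \ref{rem:MicrolocalAdmissiblityVCanBeShrunk}, verifying conditions \ref{enu:NormOfInverseEstimateOnKo} and \ref{enu:NormIntegrability} of Definition \ref{defn:micro_regular} for separately chosen parameter pairs, pushing the cone-approximation index past $n$ with Lemma \ref{lem:ConeApproximationForSubsets}, and globalizing with Lemma \ref{lem:SingleOrbitMicrolocalAdmissibility}) matches the paper. Where you genuinely diverge is in the integrability estimate. The paper first replaces $\left\Vert h\right\Vert ^{\alpha_{2}}$ by $\left|h^{-T}\eta_{0}\right|^{-\alpha\alpha_{2}}$ for a single fixed $\eta_{0}\in V_{n}$ via Lemma \ref{lem:related_to_assumptions}\ref{enu:HEstimatedByXi}, then performs the substitution $h\mapsto h^{-1}$ (bringing in the modular functions $\Delta_{H},\Delta_{G}$) and invokes the orbit-measure decomposition $\int_{\mathcal{O}}F(\xi)\,{\rm d}\xi=c_{0}^{-1}\int_{H}F(h^{T}\eta_{0})\Delta_{G}(h)^{-1}\,{\rm d}h$ from \cite{Fu96} to arrive at the finite cone integral $\int_{C(W_{0},R_{0})}\left|\xi\right|^{-\alpha\alpha_{2}}\,{\rm d}\xi$. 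You run the comparison in the opposite direction: starting from that finite cone integral, you insert the Calder\'on identity $1=\int_{H}|\widehat{\psi}(h^{T}\xi)|^{2}\,{\rm d}h$ so that Tonelli plus the linear substitution $\eta=h^{T}\xi$ produces the group integral directly, with $\left|\det h\right|^{-1}$ appearing as an honest Jacobian rather than through the modular-function relation; the cone-approximation inclusion plays the same role in both arguments (it forces the inner domain to be all of $V_{m_{0}}$ in yours, resp.\ $\chi_{C(W_{0},R_{0})}(h^{T}\eta_{0})=1$ in the paper's). Your route is self-contained modulo the existence of an admissible wavelet with $\widehat{\psi}\in C_{c}^{\infty}(V_{m_{0}})$ (guaranteed in the single-orbit case) and avoids both the citation to \cite{Fu96} and all modular-function bookkeeping; the paper's route avoids introducing an auxiliary wavelet. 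One small point you leave implicit: condition \ref{enu:NormOfInverseEstimateOnKo} is hypothesized on $K_{o}(W_{0},V_{n},R_{0})$ but applied on $K_{i}(W_{0},V_{m_{0}},R_{0})$; this is legitimate because $K_{i}(W_{0},V_{m_{0}},R_{0})\subset K_{o}(W_{0},V_{m_{0}},R_{0})\subset K_{o}(W_{0},V_{n},R_{0})$ by Remark \ref{rem:incl_prop} and $V_{m_{0}}\subset V_{n}$, but it deserves a sentence.
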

\begin{proof}
By Lemma \ref{lem:SingleOrbitMicrolocalAdmissibility}, it suffices
to show that the there is some $n_{0}\in\mathbb{N}$ such that the
dual action of $H$ is $V_{n}$-microlocally admissible \emph{in direction
$\xi$} for all $n\geq n_{0}$.

Let $n_{1}\in\mathbb{N}$ such that $V_{n_{1}}$ fulfils condition
\ref{enu:NormOfInverseEstimateOnKo} of Definition \ref{defn:micro_regular}
(as in the statement of the lemma). Since the weak $\mathcal{V}$-cone
approximation property at $\xi$ holds, there is some $n_{2}\in\mathbb{N}$,
some $R'>0$ as well as some $\xi$-neighborhood $W'\subset S^{d-1}$
such that
\[
K_{o}\left(W',V_{n_{2}},R'\right)\subset K_{i}\left(W_{0},V_{n_{2}},R_{0}\right).
\]
For $n\geq n_{0}:=\max\left\{ n_{1},n_{2}\right\} $, this yields
\begin{align}
K_{o}\left(W',V_{n},R'\right) & \subset K_{o}\left(W',V_{n_{2}},R'\right)\subset K_{i}\left(W_{0},V_{n_{2}},R_{0}\right)\nonumber \\
 & \subset K_{i}\left(W_{0},V_{n},R_{0}\right)\subset K_{o}\left(W_{0},V_{n_{1}},R_{0}\right).\label{eq:AdmissibilityByConeApproximationFundamentalInclusion}
\end{align}
Lemma \ref{lem:related_to_assumptions}\ref{enu:HEstimatedByXi}
implies that there are constants $\alpha,C>0$ such that $\left\Vert h\right\Vert \leq C\cdot\left|h^{-T}\eta_{0}\right|^{-\alpha}$
holds for all $\eta_{0}\in V_{n}\subset V_{n_{1}}$. Thus,
\begin{align*}
\int_{K_{o}(W',V_{n},R')}\left\Vert h\right\Vert ^{\alpha_{2}}\,{\rm d}h & \leq C\cdot\int_{K_{o}(W',V_{n},R')}\left|h^{-T}\eta_{0}\right|^{-\alpha\alpha_{2}}\,{\rm d}h\\
 & \le C\cdot\int_{K_{i}(W_{0},V_{n},R_{0})}\left|h^{-T}\eta_{0}\right|^{-\alpha\alpha_{2}}\,{\rm d}h\\
 & =C\cdot\int_{K_{i}(W_{0},V_{n},R_{0})^{-1}}\left|h^{T}\eta_{0}\right|^{-\alpha\alpha_{2}}\cdot\Delta_{H}(h)^{-1}\,{\rm d}h\\
 & =C\cdot\int_{K_{i}(W_{0},V_{n},R_{0})^{-1}}\left|h^{T}\eta_{0}\right|^{-\alpha\alpha_{2}}\cdot\left|\det\left(h\right)\right|^{-1}\cdot\Delta_{G}(h)^{-1}\,{\rm d}h.
\end{align*}
Here, $\Delta_{G}$ and $\Delta_{H}$ denote the modular functions
of $H$ and $G$, which are related by the fact that $\Delta_{G}$
is constant on the cosets of the translation subgroup (and thus can
be considered as a function on $H$), and by the relation $\Delta_{G}(h)=\Delta_{H}(h)\cdot\left|\det\left(h\right)\right|^{-1}$.

Since $K_{i}(W_{0},V_{n},R_{0})^{-1}\subset K_{o}(W_{0},V_{n_{1}},R_{0})^{-1}$,
Lemma \ref{lem:related_to_assumptions}\ref{enu:NormBoundedOnKoAndDeterminantEstimate}
-- together with Hadamard's inequality -- implies that $\left|\det\left(h\right)\right|^{-1}=\left|\det\left(h^{-1}\right)\right|\leq\left\Vert h^{-1}\right\Vert ^{d}$
is bounded on $K_{i}(W_{0},V_{n},R_{0})^{-1}$. Hence we may continue
the estimates via
\begin{align*}
\dots & \le C'\cdot\int_{K_{i}(W_{0},V_{n},R_{0})^{-1}}\left|h^{T}\eta_{0}\right|^{-\alpha\alpha_{2}}\cdot\Delta_{G}(h)^{-1}\,{\rm d}h\\
 & \le C'\cdot\int_{H}\chi_{C(W_{0},R_{0})}\left(h^{T}\eta_{0}\right)\cdot\left|h^{T}\eta_{0}\right|^{-\alpha\alpha_{2}}\cdot\Delta_{G}(h)^{-1}\,{\rm d}h~,
\end{align*}
since $h\in K_{i}(W_{0},V_{n},R_{0})^{-1}$ entails $h^{T}V_{n}\subset C(W_{0},R_{0})$,
and thus $\chi_{C(W_{0},R_{0})}(h^{T}\eta_{0})=1$.

Furthermore, we recall from \cite{Fu96} that we may write, for any
Borel-measurable $F:\mathcal{O}\to\mathbb{R}^{+}$, 
\[
\int_{\mathcal{O}}F(\xi)\,{\rm d}\xi=\frac{1}{c_{0}}\cdot\int_{H}F(h^{T}\eta_{0})\cdot\Delta_{G}(h)^{-1}\,{\rm d}h
\]
for some fixed $c_{0}>0$. In the present setting, we get
\begin{align*}
\int_{H}\chi_{C(W_{0},R_{0})}\left(h^{T}\eta_{0}\right)\cdot\left|h^{T}\eta_{0}\right|^{-\alpha\alpha_{2}}\cdot\Delta_{G}(h)^{-1}\,{\rm d}h & =c_{0}\cdot\int_{C(W_{0},R_{0})}\left|\xi\right|^{-\alpha\alpha_{2}}\,{\rm d}\xi\\
 & \leq c_{0}\cdot\int_{\mathbb{R}^{d}\setminus\overline{B_{R_{0}}}\left(0\right)}\left|\xi\right|^{-\alpha\alpha_{2}}\,{\rm d}\xi<\infty,
\end{align*}
as soon as $-\alpha\alpha_{2}<-d$, i.e. $\alpha_{2}>\frac{d}{\alpha}$.
This shows that part \ref{enu:NormIntegrability} of Definition \ref{defn:micro_regular}
is satisfied for $V_{n}$ as soon as $n\geq n_{0}$. Part \ref{enu:NormOfInverseEstimateOnKo}
trivially follows from the assumptions together with equation \eqref{eq:AdmissibilityByConeApproximationFundamentalInclusion}.

This establishes $V_{n}$-microlocal admissibility in direction $\xi$
for all $n\geq n_{0}$. Now transitivity of the action of $H$ yields
global microlocal admissibility via Lemma \ref{lem:SingleOrbitMicrolocalAdmissibility}.
\end{proof}
The following corollaries summarize our results for the important
case where $\mathcal{O}$ is a single orbit:
\begin{cor}
\label{cor:char_wfset_orbit_weak_cone_approx}Assume that $\mathcal{O}=H^{T}\xi_{0}$
is an open $H$-orbit with associated compact stabilizers, and let
$\mathcal{V}=(V_{n})_{n\in\mathbb{N}}$ denote a nonincreasing family
of open, relatively compact subsets of $\mathcal{O}$. \\
 Assume that the dual action of $H$ fulfils, for some $\xi_{1}\in\mathcal{O}\cap S^{d-1}$
the following properties:
\begin{itemize}
\item the weak $\mathcal{V}$-cone approximation property at $\xi_{1}$,
\item condition \ref{defn:micro_regular}\ref{enu:NormOfInverseEstimateOnKo},
with $V=V_{n}$ for some $n\in\mathbb{N}$, a suitable $R_{0}>0$
and a $\xi_{1}$-neighborhood $W_{0}\subset S^{d-1}$.
\end{itemize}
Pick any sequence $(\psi_{n})_{n\in\mathbb{N}}\subset\mathcal{S}(\mathbb{R}^{d})$
of admissible wavelets with ${\rm supp}(\widehat{\psi}_{n})\subset V_{n}$.
Then, there is some $n_{0}\in\mathbb{N}$, such that the following
statements are equivalent, for all $(x,\xi)\in\mathbb{R}^{d}\times(\mathcal{O}\cap S^{d-1})$
and $u\in\mathcal{S}'\left(\mathbb{R}^{d}\right)$
\begin{enumerate}
\item $\left(x,\xi\right)$ is a regular directed point of $u$,
\item \label{enu:FundamentalEquivalenceForAll-1}there is some $\xi$-neighborhood
$W\subset S^{d-1}$, some $R>0$ and some neighborhood $U\subset\mathbb{R}^{d}$
of $x$, as well as some $n_{1}\in\mathbb{N}$ such that for \emph{all}
$n\geq n_{1}$, the following holds:
\[
\qquad\qquad\forall N\in\mathbb{N}\,\exists C_{N}>0\,\forall y\in U\,\forall h\in K_{o}(W,V_{n},R)~:~|W_{\psi_{n}}u(y,h)|\le C_{N}\cdot\left\Vert h\right\Vert ^{N},
\]

\item \label{enu:FundamentalEquivalenceExistence-1}there is \emph{some}
$n\geq n_{0}$, a neighborhood $U\subset\mathbb{R}^{d}$ of $x$,
some $R>0$ and a $\xi$-neighborhood $W\subset S^{d-1}$ such that
\[
\qquad\qquad\forall N\in\mathbb{N}\,\exists C_{N}>0\,\forall y\in U\,\forall h\in K_{o}(W,V_{n},R)~:~|W_{\psi_{n}}u(y,h)|\le C_{N}\cdot\left\Vert h\right\Vert ^{N}.
\]

\end{enumerate}
\end{cor}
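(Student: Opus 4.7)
The plan is to reduce the corollary to Theorem~\ref{thm:char_wfset}\ref{enu:WaveFrontSetCharacterizationWeakConeProperty} by upgrading the two hypotheses (which are only assumed to hold at the single direction $\xi_{1}$) to their \emph{global} counterparts. Because $\mathcal{O}$ is a single orbit with compact stabilizers, exactly this promotion is the content of Lemmas~\ref{lem:SingleOrbitMicrolocalAdmissibility}, \ref{lem:SingleOrbitConeApproximationProperty}, and especially \ref{lem:verify_adm_from_cone}.

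First, I would apply Lemma~\ref{lem:SingleOrbitConeApproximationProperty} to the weak $\mathcal{V}$-cone approximation property at $\xi_{1}$; since $\mathcal{O} = H^{T}\xi_{0}$ is a single orbit, this immediately yields the \emph{global} weak $\mathcal{V}$-cone approximation property. Next, the hypothesis gives us condition~\ref{defn:micro_regular}\ref{enu:NormOfInverseEstimateOnKo} at $\xi_{1}$ for some $V_{n}$, which is precisely the input required by Lemma~\ref{lem:verify_adm_from_cone}. That lemma then produces an index $n_{0}\in\mathbb{N}$ such that the dual action is globally $V_{n}$-microlocally admissible for \emph{every} $n\geq n_{0}$. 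This $n_{0}$ will be the one appearing in the statement of the corollary.

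With these two global properties in hand, I would invoke the ``simultaneous characterization'' statement at the end of Theorem~\ref{thm:char_wfset}\ref{enu:WaveFrontSetCharacterizationWeakConeProperty}, applied with the fixed $n_{0}$ just produced and with the given sequence $(\psi_{n})_{n\in\mathbb{N}}$ of admissible wavelets supported inside the $V_{n}$. The theorem then delivers, for \emph{every} $\xi\in\mathcal{O}\cap S^{d-1}$ and every $x\in\mathbb{R}^{d}$, the equivalence of being a regular directed point of $u$ with the two wavelet-decay conditions (the ``for all $n\geq n_{1}$'' version and the ``for some $n\geq n_{0}$'' version). This is exactly the content of statements (1), (2), (3) of the corollary.

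There is no real obstacle here; the whole proof is a gluing together of already-proved results, and the only subtle point is bookkeeping the index $n_{0}$: it must simultaneously be large enough for $V_{n_{0}}$-microlocal admissibility (from Lemma~\ref{lem:verify_adm_from_cone}) and be consistent with the threshold in condition~(3) of Theorem~\ref{thm:char_wfset}\ref{enu:WaveFrontSetCharacterizationWeakConeProperty}. Choosing the $n_{0}$ provided by Lemma~\ref{lem:verify_adm_from_cone} and then using the monotonicity of the hypotheses in $n$ (every $n\geq n_{0}$ still gives $V_{n}$-microlocal admissibility, cf.\ Remark~\ref{rem:MicrolocalAdmissiblityVCanBeShrunk}) handles this cleanly.
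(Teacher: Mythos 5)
Your proposal is correct and follows essentially the same route as the paper: upgrade the weak $\mathcal{V}$-cone approximation property to its global version via Lemma~\ref{lem:SingleOrbitConeApproximationProperty}, obtain global $V_{n}$-microlocal admissibility for all $n\geq n_{0}$ from Lemma~\ref{lem:verify_adm_from_cone}, and then apply Theorem~\ref{thm:char_wfset}\ref{enu:WaveFrontSetCharacterizationWeakConeProperty}. The bookkeeping of $n_{0}$ is handled the same way as in the paper.
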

\begin{proof}
This is an immediate consequence of Theorem \ref{thm:char_wfset}\ref{enu:WaveFrontSetCharacterizationWeakConeProperty},
if we observe that Lemma \ref{lem:SingleOrbitConeApproximationProperty}
implies that $H$ has the global weak $\mathcal{V}$-cone approximation
property and that Lemma \ref{lem:verify_adm_from_cone} yields some
$n_{0}\in\mathbb{N}$ such that the dual action of $H$ is globally
$V_{n}$-microlocally admissible for all $n\geq n_{0}$ (and hence
in particular for $n=n_{0}$).
\end{proof}
If we apply this corollary for the case $\mathcal{V}=\left(V\right)_{n\in\mathbb{N}}$
and $\left(\psi_{n}\right)_{n\in\mathbb{N}}=\left(\psi\right)_{n\in\mathbb{N}}$,
we get the following more convenient version (if the dual action fulfils
the strong cone approximation property).
\begin{cor}
\label{cor:char_wfset_orbit_strong_cone_approx}Assume that $\mathcal{O}=H^{T}\xi_{0}$
is an open $H$-orbit with associated compact stabilizers and let
$\emptyset\neq V\Subset\mathcal{O}$.\\
Assume that the dual action of $H$ fulfils, for some $\xi_{1}\in\mathcal{O}\cap S^{d-1}$,
the following properties:
\begin{itemize}
\item the (strong) $V$-cone approximation property at $\xi_{1}$,
\item part \ref{enu:NormOfInverseEstimateOnKo} of Definition \ref{defn:micro_regular}.
\end{itemize}
Then for each admissible $\psi\in\mathcal{S}\left(\mathbb{R}^{d}\right)$
with ${\rm supp}(\widehat{\psi})\subset V$, each tempered distribution
$u\in\mathcal{S}'\left(\mathbb{R}^{d}\right)$ and each $\left(x,\xi\right)\in\mathbb{R}^{d}\times\left(\mathcal{O}\cap S^{d-1}\right)$,
the following are equivalent:
\begin{enumerate}
\item $\left(x,\xi\right)$ is a regular directed point of $u$,
\item there exists a neighborhood $U\subset\mathbb{R}^{d}$ of $x$, some
$R>0$ and a $\xi$-neighborhood $W\subset S^{d-1}$ such that
\[
\forall N\in\mathbb{N}\,\exists C_{N}>0\,\forall h\in K_{o}\left(W,V,R\right):\quad\left|W_{\psi}u\left(y,h\right)\right|\leq C_{N}\cdot\left\Vert h\right\Vert ^{N}.
\]

\end{enumerate}
\end{cor}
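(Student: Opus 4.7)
The plan is to reduce the statement to Corollary \ref{cor:char_wfset_orbit_weak_cone_approx} by specializing to the constant family $\mathcal{V} = (V_n)_{n \in \mathbb{N}}$ with $V_n := V$ for all $n$, together with the constant sequence of wavelets $(\psi_n)_{n \in \mathbb{N}}$ defined by $\psi_n := \psi$. First I would check that the hypotheses of Corollary \ref{cor:char_wfset_orbit_weak_cone_approx} are satisfied: for this constant family, the weak $\mathcal{V}$-cone approximation property at $\xi_1$ coincides verbatim with the strong $V$-cone approximation property at $\xi_1$, which is assumed; condition \ref{enu:NormOfInverseEstimateOnKo} of Definition \ref{defn:micro_regular} is assumed for this very $V$, hence holds with $V = V_n$ for every $n$; and the admissible wavelet $\psi$ obeys $\mathrm{supp}(\widehat{\psi}) \subset V = V_n$, so the constant sequence $(\psi_n) = (\psi)$ is a permissible choice.

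Next, I would trace through the conclusion. Corollary \ref{cor:char_wfset_orbit_weak_cone_approx} produces some threshold $n_0 \in \mathbb{N}$ and asserts the equivalence of three conditions. Since both $V_n$ and $\psi_n$ are independent of $n$, the quantifiers ``there is some $n \geq n_0$'' in its condition (3) and ``for all $n \geq n_1$'' in its condition (2) are vacuous: in either case, the assertion reduces to the existence of a neighborhood $U$ of $x$, some $R > 0$ and a $\xi$-neighborhood $W \subset S^{d-1}$ such that $|W_\psi u(y,h)| \le C_N \|h\|^N$ holds for all $N \in \mathbb{N}$, all $y \in U$ and all $h \in K_o(W,V,R)$. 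This is exactly condition (2) of the present corollary, and the equivalence with being a regular directed point transfers at once.

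I do not foresee any genuine obstacle; the argument is essentially bookkeeping. The only nuance worth flagging is that Corollary \ref{cor:char_wfset_orbit_weak_cone_approx} furnishes an $n_0$ whose existence must be invoked to conclude, but this $n_0$ has no visible role once the family is constant, because one is not required to shrink the Fourier support of the wavelet. A single $\psi$ suffices, reflecting the fact that the strong cone approximation property is doing the work that in the weak case required a shrinking family of wavelets.
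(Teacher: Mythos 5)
Your proposal is correct and is exactly the argument the paper uses: immediately after Corollary \ref{cor:char_wfset_orbit_weak_cone_approx}, the paper notes that the statement in question follows by taking the constant family $\mathcal{V} = (V)_{n\in\mathbb{N}}$ and the constant wavelet sequence $(\psi_n) = (\psi)_{n\in\mathbb{N}}$. Your observation that the thresholds $n_0, n_1$ become vacuous in the constant case is the right way to see that all three conditions of the weak-cone corollary collapse to condition (2) of the present statement.
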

In Lemma \ref{lem:anisotropy_necessary}, we showed that a dilation
group $H$ can never fulfil the (strong) $V_{0}$-cone approximation
property if it is isotropic in the sense that the group $H\cap\left[\left(0,\infty\right)\cdot{\rm id}\right]$
is nontrivial. In this case, our methods (in particular Corollary
\ref{cor:char_wfset_orbit_weak_cone_approx}) only yield a characterization
of the wavefront set using \emph{multiple} wavelets.

We will now show that this is not a defect of our method of proof;
indeed, isotropic groups (in the sense described above) can never
yield a characterization of the wavefront set (in the sense of Corollary
\ref{cor:char_wfset_orbit_strong_cone_approx}) using only a \emph{single}
wavelet, at least as long as one is allowed to choose the wavelet
freely, only subject to a condition on the Fourier support.
\begin{lem}
\label{lem:IsotropicGroupsDoNotAllowSingleWaveletCharacterization}Assume
that $d\geq2$ and that $\mathcal{O}=H^{T}\xi_{0}\subset\mathbb{R}^{d}$
is an open $H$-orbit with associated compact stabilizers. Furthermore,
assume that there is some $\xi\in\mathcal{O}\cap S^{d-1}$ and some
$x\in\mathbb{R}^{d}$ as well as some $\varnothing\neq V\Subset\mathcal{O}$
such that the following holds:

For every distribution $u\in\mathcal{S}'\left(\mathbb{R}^{d}\right)$
with regular directed point $\left(x,\xi\right)$ and \emph{every}
admissible $\psi\in\mathcal{S}\left(\mathbb{R}^{d}\right)$ with ${\rm supp}\left(\smash{\widehat{\psi}}\right)\subset V$
there exists an open $\xi$-neighborhood $W\subset S^{d-1}$ and some
$R>0$ such that
\begin{equation}
\forall N\in\mathbb{N}\,\exists C_{N}>0\,\forall h\in K_{o}\left(W,V,R\right):\qquad\left|\left(W_{\psi}u\right)\left(x,h\right)\right|\leq C_{N}\cdot\left\Vert h\right\Vert ^{N}.\label{eq:WaveletDecayAssumption}
\end{equation}
Then $H$ is ``anisotropic'' in the sense that $H\cap\left[\left(0,\infty\right)\cdot{\rm id}\right]=\left\{ {\rm id}\right\} $.\end{lem}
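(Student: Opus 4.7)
The plan is to proceed by contradiction. Suppose $H\cap[(0,\infty)\cdot{\rm id}]$ contains some $\alpha\cdot{\rm id}$ with $\alpha\neq 1$; replacing $\alpha$ by $\alpha^{-1}$ if necessary, I may assume $\alpha\in(0,1)$. I would then construct an explicit tempered distribution $u$ having $(x,\xi)$ as a regular directed point, together with an explicit admissible $\psi\in\mathcal{S}(\mathbb{R}^d)$ with ${\rm supp}(\widehat{\psi})\subset V$, such that the wavelet transform $W_\psi u(x,\cdot)$ does \emph{not} decay faster than every power of $\|h\|$ along an explicit sequence $h_n\in K_o(W,V,R)$ valid for \emph{every} choice of $\xi$-neighborhood $W$ and $R>0$, contradicting \eqref{eq:WaveletDecayAssumption}.

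The geometric setup mirrors the proof of Lemma \ref{lem:anisotropy_necessary}. Since $\mathcal{O}=H^T\xi_0$ is a single orbit and $\xi\in\mathcal{O}$, pick $h_\xi\in H$ with $h_\xi^T\xi\in V$. Then $h_\xi^{-T}V$ is a nonempty open subset of $\mathbb{R}^d$, and because $d\geq 2$, the image of $h_\xi^{-T}V\setminus\{0\}$ under $\Phi(y):=y/|y|$ is an open (hence uncountable) subset of $S^{d-1}$. I can therefore fix a direction $\eta=\Phi(h_\xi^{-T}v_0)\in S^{d-1}\setminus\{\pm\xi\}$ for some $v_0\in V$. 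Then $\zeta:=h_\xi^T\eta$ is a positive scalar multiple of $v_0\in V$, so the ray $\mathbb{R}^{+}\zeta$ meets $V$ in an open segment.

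For the distribution I would take $u$ to be the surface (Lebesgue) measure on the affine hyperplane $\{z:\langle z-x,\eta\rangle=0\}$. This is tempered, and its wavefront set is the conormal bundle of the hyperplane; the only singular directions at any point of the hyperplane are $\pm\eta$, so $(x,\xi)$ is a regular directed point of $u$. Its Fourier transform equals (a positive constant times) $e^{-2\pi i\langle x,\cdot\rangle}$ times the one-dimensional Lebesgue measure on $\mathbb{R}\eta$. For the wavelet, I first pick $\widehat{\psi}_0\in C_c^\infty(V)$ nonnegative with $\widehat{\psi}_0(\zeta)>0$; then the line integral $I_{\psi_0}(\zeta):=\int_{\mathbb{R}}\widehat{\psi}_0(s\zeta)\,ds$ is strictly positive. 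By the remarks following Assumption \ref{assume:proper_dual}, the single-orbit-with-compact-stabilizer structure forces the admissibility integral $f:=\int_H|\widehat{\psi}_0(h^T\eta')|^2\,dh$ to be a finite positive constant, independent of $\eta'\in\mathcal{O}$; setting $\psi:=\psi_0/\sqrt{f}$ therefore yields an admissible Schwartz function with ${\rm supp}(\widehat{\psi})\subset V$ and $I_\psi(\zeta)\neq 0$.

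Finally, define $h_n:=\alpha^n h_\xi\in H$. Since $h_n^{-T}(h_\xi^T\xi)=\alpha^{-n}\xi$ lies in $h_n^{-T}V\cap C(W,R)$ as soon as $n$ is large (for any $\xi$-neighborhood $W$ and any $R>0$), one has $h_n\in K_o(W,V,R)$ eventually, while $\|h_n\|=\alpha^n\|h_\xi\|\to 0$. A direct Fourier-side computation of $W_\psi u(x,h_n)=\int\overline{\widehat{\pi(x,h_n)\psi}(\xi')}\,d\widehat{u}(\xi')$ shows that the two $e^{-2\pi i\langle x,\cdot\rangle}$ phase factors cancel; after substituting $s=\alpha^n t$ in the resulting one-dimensional integral over $\mathbb{R}\eta$ and using $|\det h_n|=\alpha^{nd}|\det h_\xi|$, $\|h_n\|=\alpha^n\|h_\xi\|$, one obtains
\[
|W_\psi u(x,h_n)|=|\det h_n|^{1/2}\cdot\alpha^{-n}\cdot|I_\psi(\zeta)|=C\cdot\|h_n\|^{d/2-1}
\]
with a strictly positive constant $C$. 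For $d\geq 2$ the exponent $d/2-1$ is nonnegative, so this quantity fails to decay faster than $\|h_n\|^N$ for any $N>d/2-1$; combined with $h_n\in K_o(W,V,R)$ for large $n$, this contradicts \eqref{eq:WaveletDecayAssumption}. The main technical care lies in verifying that $\psi_0$ can be normalized to an admissible wavelet while preserving both the support condition and the nonvanishing of the line integral $I_\psi(\zeta)$; the remaining bookkeeping (Fourier-side pairing for the noncompactly supported $u$, and the scaling of $|\det h_n|$ and $\|h_n\|$ along the sequence) is routine, since $\widehat{u}$ is a tempered Radon measure and $\widehat{\pi(x,h_n)\psi}$ is Schwartz with compact Fourier support.
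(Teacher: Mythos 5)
Your proposal is correct and follows essentially the same route as the paper's proof: the same counterexample distribution (surface measure on a hyperplane whose normal $\eta$ is obtained by pushing a point of $V$ through $h_\xi^{-T}$ and is distinct from $\pm\xi$), the same choice of nonnegative admissible wavelet positive along the relevant ray, the same sequence $h_{n}=\alpha^{n}h_{\xi}\in K_{o}(W,V,R)$, and the same exact evaluation $|W_{\psi}u(x,h_{n})|=C\cdot\|h_{n}\|^{d/2-1}$ with $C>0$. The only slip is cosmetic: since $\zeta=v_{0}/|h_{\xi}^{-T}v_{0}|$ need not lie in $V$, the condition ``$\widehat{\psi}_{0}(\zeta)>0$'' should be replaced by positivity of $\widehat{\psi}_{0}$ at $v_{0}$ (a point of $\mathbb{R}^{+}\zeta\cap V$), which is exactly what the paper requires and still gives $I_{\psi}(\zeta)>0$.
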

\begin{rem*}
The proof will show that it actually suffices to assume that there
is some $N\in\mathbb{N}$ with $N>\frac{d}{2}-1$ such that
\[
\left|\left(W_{\psi}u\right)\left(x,h\right)\right|\leq C_{N,\psi,u}\cdot\left\Vert h\right\Vert ^{N}
\]
holds for all $h\in K_{o}\left(\left\{ \xi\right\} ,V,R\right)$ and
all admissible $\psi\in\mathcal{S}\left(\mathbb{R}^{d}\right)$ with
${\rm supp}\left(\smash{\widehat{\psi}}\right)\subset V$.\end{rem*}
\begin{proof}
Observe that $\pi\left(x,h\right)\psi=L_{x}D_{h}\psi$ with $\left(L_{x}f\right)\left(y\right)=f\left(y-x\right)$
and
\[
\left(D_{h}f\right)\left(y\right)=\left|\det\left(h\right)\right|^{-1/2}\cdot f\left(h^{-1}y\right).
\]
Hence, the assumption is also satisfied with $0$ instead of $x$,
because if the distribution $v\in\mathcal{S}'\left(\mathbb{R}^{d}\right)$
has the regular directed point $\left(0,\xi\right)$, then there is
some open $\xi$-neighborhood $W'\subset S^{d-1}$ as well as $\varphi\in C_{c}^{\infty}\left(\mathbb{R}^{d}\right)$
with $\varphi\equiv1$ on some neighborhood $U\subset\mathbb{R}^{d}$
of the origin such that
\[
\forall N\in\mathbb{N}\,\exists C_{N}>0\,\forall\eta\in C\left(W'\right):\qquad\left|\widehat{\varphi v}\left(\eta\right)\right|\leq C_{N}\cdot\left(1+\left|\eta\right|\right)^{-N}.
\]
But then $\gamma:=L_{x}\varphi\equiv1$ on the neighborhood $x+U\subset\mathbb{R}^{d}$
of $x$ and $u:=L_{x}v$ satisfies $\gamma u=L_{x}\left(\varphi v\right)$
as well as
\[
\left|\widehat{\gamma u}\left(\eta\right)\right|=\left|\widehat{L_{x}\left(\varphi v\right)}\left(\eta\right)\right|=\left|e^{-2\pi i\left\langle \eta,x\right\rangle }\cdot\widehat{\varphi v}\left(\eta\right)\right|\leq C_{N}\cdot\left(1+\left|\eta\right|\right)^{-N},
\]
for all $\eta\in C\left(W'\right)$, so that $u$ has the regular
directed point $\left(x,\xi\right)$. By assumption, this yields some
$\xi$-neighborhood $W\subset S^{d-1}$ and some $R>0$ such that
for arbitrary admissible $\psi$ with ${\rm supp}\left(\smash{\widehat{\psi}}\right)\subset V$,
the estimate
\begin{align*}
\left|\left(W_{\psi}v\right)\left(0,h\right)\right| & =\left|\left(W_{\psi}\left(L_{-x}u\right)\right)\left(0,h\right)\right|\\
 & =\left|\left\langle L_{-x}u\mid D_{h}\psi\right\rangle \right|\\
 & =\left|\left\langle u\mid L_{x}D_{h}\psi\right\rangle \right|=\left|\left\langle u\mid\pi\left(x,h\right)\psi\right\rangle \right|\\
 & =\left|\left(W_{\psi}u\right)\left(x,h\right)\right|\leq C_{N}\cdot\left\Vert h\right\Vert ^{N}
\end{align*}
holds for all $h\in K_{o}\left(W,V,R\right)$ and $N\in\mathbb{N}$,
so that the assumption is indeed also satisfied for $x=0$.

Now, let $\xi_{1}\in V\neq\varnothing$ be arbitrary. Using $\xi_{1}\in V\subset\mathcal{O}$
as well as $\xi\in\mathcal{O}$ and the fact that $\mathcal{O}=H^{T}\xi_{0}$
is a single orbit, we see that there is some $h\in H$ fulfilling
$h^{T}\xi_{1}=\xi$. This implies that $h^{T}V$ is open with $\xi\in h^{T}V\subset\mathcal{O}\subset\mathbb{R}^{d}\setminus\left\{ 0\right\} $
(note that $0\notin\mathcal{O}$ by Lemma \ref{lem:related_to_assumptions}).

Now there is some $\gamma\in h^{T}V$ with $\xi\notin{\rm span}\left(\left\{ \gamma\right\} \right)$,
because otherwise we would have $0\neq\xi=\alpha\cdot\gamma$ for
some $\alpha\in\mathbb{R}$, which implies $\alpha\neq0$ and hence
$\gamma=\xi/\alpha\in{\rm span}\left(\left\{ \xi\right\} \right)$
for arbitrary $\gamma\in h^{T}V$. But this would imply that $h^{T}V$
is contained in the one-dimensional space ${\rm span}\left(\left\{ \xi\right\} \right)$,
in contradiction to the fact that $h^{T}V\subset\mathbb{R}^{d}$ is
open with $d\geq2$.

Hence, we can choose some $\gamma\in h^{T}V$ with $\xi\notin{\rm span}\left(\left\{ \gamma\right\} \right)$
and set
\[
\mathcal{V}:=\left[{\rm span}\left(\left\{ \gamma\right\} \right)\right]^{\bot}.
\]
This implies $\xi\notin{\rm span}\left(\left\{ \gamma\right\} \right)=\mathcal{V}^{\bot}$.
Define now $u:=\delta_{\mathcal{V}}\in\mathcal{S}'\left(\mathbb{R}^{d}\right)$
by
\[
\delta_{\mathcal{V}}\left(f\right):=\int_{\mathcal{V}}f\left(x\right)\,{\rm d}S\left(x\right)\qquad\text{ for }f\in\mathcal{S}\left(\smash{\mathbb{R}^{d}}\right),
\]
where ${\rm d}S$ is the ($d-1$ dimensional) euclidean surface measure.
An application of \cite[Theorem 8.15]{HoermanderPartialDifferentialOp1}
shows that $\left(0,\xi\right)\in\mathbb{R}^{d}\times\left(S^{d-1}\cap\mathcal{O}\right)$
is a regular directed point of $u=\delta_{\mathcal{V}}$.

Choose some $\psi\in\mathcal{S}\left(\mathbb{R}^{d}\right)$ with
$\widehat{\psi}\in C_{c}^{\infty}\left(V\right)$, $\widehat{\psi}\geq0$
as well as $\widehat{\psi}\left(h^{-T}\gamma\right)>0$ and
\begin{equation}
\int_{H}\left|\widehat{\psi}\left(g^{T}\gamma\right)\right|^{2}\,{\rm d}g=1.\label{eq:WaveletAdmissibility}
\end{equation}
Note that such a choice is possible because of $h^{-T}\gamma\in h^{-T}h^{T}V\subset V$,
where $V\subset\mathcal{O}$ is open. This implies that all conditions
except for equation \eqref{eq:WaveletAdmissibility} can be fulfilled.
But these conditions already imply $\int_{H}\left|\smash{\widehat{\psi}\left(g^{T}\gamma\right)}\right|^{2}\,{\rm d}g>0$
because the integrand is continuous and nonnegative with $\left|\widehat{\psi}\left(\left(h^{-1}\right)^{T}\gamma\right)\right|^{2}>0$.
Hence, equation \eqref{eq:WaveletAdmissibility} can be achieved by
rescaling. The discussion after Assumption \ref{assume:proper_dual}
shows that $\psi$ is indeed an admissible wavelet.

By assumption, there exists a $\xi$-neighborhood $W\subset S^{d-1}\cap\mathcal{O}$
and some $R>0$ such that equation \eqref{eq:WaveletDecayAssumption}
is fulfilled. Assume towards a contradiction that $H_{1}:=H\cap\left[\left(0,\infty\right)\cdot{\rm id}\right]\neq\left\{ {\rm id}\right\} $.
As $H_{1}\leq H$ is a subgroup, this implies that there is a sequence
$\left(\alpha_{n}\right)_{n\in\mathbb{N}}$ in $\left(0,1\right)$
with $\alpha_{n}\to0$ and $\alpha_{n}\cdot{\rm id}\in H$ for all
$n\in\mathbb{N}$.

Let $g:=h^{-1}\in H$ and $g_{n}:=\alpha_{n}g\in H$. Then $g_{n}^{-T}\xi_{1}=\alpha_{n}^{-1}\cdot h^{T}\xi_{1}=\alpha_{n}^{-1}\cdot\xi\in C\left(\left\{ \xi\right\} \right)\subset C\left(W\right)$
(because of $\xi\in W$) with
\[
\left|g_{n}^{-T}\xi_{1}\right|=\left|\alpha_{n}^{-1}\cdot\xi\right|=\alpha_{n}^{-1}\xrightarrow[n\to\infty]{}\infty.
\]
Hence, $g_{n}^{-T}\xi_{1}\in C\left(\left\{ \xi\right\} ,R\right)\subset C\left(W,R\right)$
for $n\geq n\left(R\right)$ large enough. Because of $\xi_{1}\in V$,
this implies $g_{n}\in K_{o}\left(\left\{ \xi\right\} ,V,R\right)\subset K_{o}\left(W,V,R\right)$
for $n\geq n\left(R\right)$ large enough. By equation \eqref{eq:WaveletDecayAssumption},
this implies that for each $N\in\mathbb{N}$, there is some constant
$C_{N}>0$ such that
\[
\left|W_{\psi}u\left(0,g_{n}\right)\right|\leq C_{N}\cdot\left\Vert g_{n}\right\Vert ^{N}=C_{N}\left\Vert g\right\Vert ^{N}\cdot\alpha_{n}^{N}
\]
holds for all $n\in\mathbb{N}$.

But
\begin{eqnarray*}
\left(W_{\psi}u\right)\left(0,g_{n}\right) & = & \left\langle u\,\mid\,\pi\left(0,g_{n}\right)\psi\right\rangle \\
 & = & \left\langle \widehat{u}\,\mid\,\mathcal{F}\left(\pi\left(0,g_{n}\right)\psi\right)\right\rangle \\
 & \overset{\text{Gl. }\eqref{eq:QuasiRegularOnFourierSide}}{=} & \left|\det\left(g_{n}\right)\right|^{1/2}\cdot\left\langle \widehat{\delta_{\mathcal{V}}}\,\mid\, e^{-2\pi i\left\langle 0,\cdot\right\rangle }\cdot\widehat{\psi}\left(g_{n}^{T}\cdot\right)\right\rangle \\
 & \overset{\left(\ast\right)}{=} & c\cdot\alpha_{n}^{d/2}\cdot\left|\det\left(g\right)\right|^{1/2}\cdot\left\langle \delta_{\mathcal{V}^{\bot}}\,,\,\overline{\widehat{\psi}\left(g_{n}^{T}\cdot\right)}\right\rangle \\
 & \overset{\widehat{\psi}\text{ real}}{=} & c\cdot\alpha_{n}^{d/2}\cdot\left|\det\left(g\right)\right|^{1/2}\cdot\int_{\mathcal{V}^{\bot}}\widehat{\psi}\left(g_{n}^{T}\theta\right)\,{\rm d}\theta\\
 & = & c\cdot\alpha_{n}^{d/2}\cdot\left|\det\left(g\right)\right|^{1/2}\cdot\int_{\mathcal{V}^{\bot}}\widehat{\psi}\left(g^{T}\cdot\alpha_{n}\theta\right)\,{\rm d}\theta\\
 & \overset{\varrho=\alpha_{n}\theta,\quad\dim\left(\mathcal{V}^{\bot}\right)=1}{=} & c\cdot\alpha_{n}^{d/2}\cdot\left|\det\left(g\right)\right|^{1/2}\cdot\alpha_{n}^{-1}\int_{\mathcal{V}^{\bot}}\widehat{\psi}\left(g^{T}\varrho\right)\,{\rm d}\varrho\\
 & = & C_{d,g,\psi,\mathcal{V}}\cdot\alpha_{n}^{\frac{d}{2}-1}.
\end{eqnarray*}
In the step marked with $\left(\ast\right)$, we used \cite[Theorem 7.1.25]{HoermanderPartialDifferentialOp1}.
The constant $c>0$ depends only on $d\in\mathbb{N}$ and $d-1=\dim\left(\mathcal{V}\right)$
and comes from the fact that Hörmander uses a slightly different normalization
of the Fourier transform than in this paper.

Observe that the integrand of $\int_{\mathcal{V}^{\bot}}\widehat{\psi}\left(g^{T}\varrho\right)\,{\rm d}\varrho$
is a non-negative, continuous function which satisfies $\widehat{\psi}\left(g^{T}\gamma\right)=\widehat{\psi}\left(h^{-T}\gamma\right)>0$
and $\gamma\in\mathcal{V}^{\bot}$. Thus, $C_{d,g,\psi,\mathcal{V}}>0$. 

Putting everything together, we arrive at
\[
0<C_{d,g,\psi,\mathcal{V}}=\alpha_{n}^{1-\frac{d}{2}}\cdot\left|W_{\psi}u\left(0,g_{n}\right)\right|\leq C_{N}\left\Vert g\right\Vert ^{N}\cdot\alpha_{n}^{N+1-\frac{d}{2}}\xrightarrow[n\to\infty]{}0
\]
as soon as $N+1-\frac{d}{2}>0$. This is the desired contradiction.
\end{proof}

\section{Geometric reformulation of the cone approximation property}

\label{sect:geometric}The various cone approximation properties (cf.
Definition \ref{def:ConeApproximationProperties}) are defined in
terms of inclusions between sets of the form 
\begin{align*}
K_{o}\left(W,V,R\right) & =\left\{ h\in H\with h^{-T}V\cap C\left(W,R\right)\neq\emptyset\right\} ,\\
K_{i}\left(W,V,R\right) & =\left\{ h\in H\with h^{-T}V\subset C\left(W,R\right)\right\} 
\end{align*}
which are subsets of the dilation group $H$.

In this section, we will formulate so-called \textbf{geometric cone
approximation properties} which replace inclusions of the form 
\[
K_{o}\left(W',V,R'\right)\subset K_{i}\left(W,V,R\right)
\]
by inclusions of the form 
\[
C_{o}\left(W',V,R';H\right)\subset C_{i}\left(W,V,R;H\right),
\]
where the sets $C_{i/o}$ are subsets of $\mathcal{O}\subset\mathbb{R}^{d}$,
which are therefore more accessible for geometric arguments and geometric
intuition (at least for $d\leq3$). We will then show that these geometric
conditions are equivalent to the conditions in Definition \ref{def:ConeApproximationProperties}. 
\begin{defn}
\label{def:GeometricConeApproximationProperty}Let $H$ be a dilation
group satisfying Assumption \ref{assume:proper_dual}. Let $W\subset S^{d-1}$
be open and let $\emptyset\neq V\Subset\mathcal{O}$ as well as $R>0$.
Define 
\begin{align*}
C_{o}\left(W,V,R;H\right) & :=\bigcup\left\{ h^{T}V\with h\in H\text{ with }h^{T}V\cap C\left(W,R\right)\neq\emptyset\right\} =\bigcup\left\{ h^{-T}V\with h\in K_{o}\left(W,V,R\right)\right\} ,\\
C_{i}\left(W,V,R;H\right) & :=\bigcup\left\{ h^{T}V\with h\in H\text{ with }h^{T}V\subset C\left(W,R\right)\right\} =\bigcup\left\{ h^{-T}V\with h\in K_{i}\left(W,V,R\right)\right\} .
\end{align*}
Let $\xi\in\mathcal{O}\cap S^{d-1}$. 
\begin{enumerate}[label=(\alph*)]
\item Let $\mathcal{V}=(V_{n})_{n\in\mathbb{N}}$ be a nonincreasing family
of subsets $\emptyset\not=V_{n}\Subset\mathcal{O}$.

The dual action has the \textbf{weak geometric $\mathcal{V}$-cone
approximation property at $\xi$} if for all $\xi$-neighborhoods
$W\subset S^{d-1}$ and all $R>0$, there exist $n\in\mathbb{N}$
as well as $R'>0$ and a $\xi$-neighborhood $W'\subset S^{d-1}$
such that 
\[
C_{o}\left(W',V_{n},R';H\right)\subset C_{i}\left(W,V_{n},R;H\right).
\]

\item Let $\emptyset\neq V_{0}\Subset\mathcal{O}$. The dual action has
the \textbf{geometric $V_{0}$-cone approximation property at $\xi$}
if for all $\xi$-neighborhoods $W\subset S^{d-1}$ and all $R>0$
there are $R'>0$ and a $\xi$-neighborhood $W'\subset S^{d-1}$ such
that 
\[
C_{o}\left(W',V_{0},R';H\right)\subset C_{i}\left(W,V_{0},R;H\right).
\]

\end{enumerate}
\end{defn}
Again, note that the geometric cone approximation property is a special
case of its weak sibling. We now observe some simple implications
between inclusions of the sets $C_{i/o}$ and inclusions of the sets
$K_{i/o}$, which will then show that the geometric cone approximation
properties are indeed equivalent to the earlier defined versions. 
\begin{lem}
\label{lem:RelationsBetweenKioAndCio}Let $\emptyset\neq V,V'\Subset\mathcal{O}$
and $W,W'\subset S^{d-1}$ as well as $R,R'>0$. 
\begin{enumerate}[label=(\alph*)]
\item \label{enu:CInclusionYieldsKInclusion}The inclusion $C_{o}\left(W',V',R';H\right)\subset C_{i}\left(W,V,R;H\right)$
implies 
\[
K_{o}\left(W',V',R'\right)\subset K_{i}\left(W,V',R\right).
\]

\item \label{enu:KInclusionYieldsCInclusion}The inclusion $K_{o}\left(W',V',R'\right)\subset K_{i}\left(W,V,R\right)$
implies 
\[
C_{o}\left(W',V',R';H\right)\subset C_{i}\left(W,V,R;H\right)
\]
as long as $V'\subset V$ holds. 
\item In particular, for each $\emptyset\neq V_{0}\Subset\mathcal{O}$ and
all $\xi\in\mathcal{O}\cap S^{d-1}$,

\begin{enumerate}[label=(\roman*)]
\item the dual action has the weak $\mathcal{V}$-cone approximation property
at $\xi$ if and only if it has the \emph{geometric} weak $\mathcal{V}$-cone
approximation property at $\xi$. 
\item the dual action has the $V_{0}$-cone approximation property at $\xi$
if and only if it has the \emph{geometric} $V_{0}$-cone approximation
property at $\xi$.
\end{enumerate}
\end{enumerate}
\end{lem}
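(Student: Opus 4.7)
The plan is to establish the two implications in parts (a) and (b) directly from the definitions, then deduce part (c) by taking $V=V'$ (or $V=V'=V_n$ in the weak case).

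For part (a), fix $h\in K_o(W',V',R')$; by definition $h^{-T}V'\cap C(W',R')\neq\emptyset$, which places $h^{-T}V'$ among the sets whose union forms $C_o(W',V',R';H)$, so $h^{-T}V'\subset C_o(W',V',R';H)$. The key observation, immediate from the definition of $C_i$, is that $C_i(W,V,R;H)\subset C(W,R)$, because every set $g^{-T}V$ appearing in the defining union satisfies $g^{-T}V\subset C(W,R)$. Combining with the hypothesis $C_o(W',V',R';H)\subset C_i(W,V,R;H)$ yields $h^{-T}V'\subset C(W,R)$, i.e.\ $h\in K_i(W,V',R)$. Note that the conclusion necessarily involves $V'$ rather than $V$, since the information we have concerns only $h^{-T}V'$.

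For part (b), let $\eta\in C_o(W',V',R';H)$; pick $h\in K_o(W',V',R')$ with $\eta\in h^{-T}V'$. The hypothesis gives $h\in K_i(W,V,R)$, so $h^{-T}V\subset C(W,R)$, and consequently $h^{-T}V$ participates in the union defining $C_i(W,V,R;H)$. Using $V'\subset V$, we get $\eta\in h^{-T}V'\subset h^{-T}V\subset C_i(W,V,R;H)$, as desired. The assumption $V'\subset V$ is evidently essential here to pass from the smaller piece $h^{-T}V'$ to the larger one $h^{-T}V$ controlled by $K_i(W,V,R)$.

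Part (c) is then immediate: applying (a) and (b) with $V'=V=V_0$ shows that the strong $V_0$-cone approximation property at $\xi$ is equivalent to its geometric counterpart; in the weak case, for any $\xi$-neighborhood $W$ and $R>0$, apply the cone approximation property (geometric or not) to obtain a common $n\in\mathbb{N}$, then invoke (a) or (b) with $V=V'=V_n$ to transfer the inclusion between the $K$-version and the $C$-version. No delicate estimate is needed; the only point to watch is the matching of the $V$-parameters in (a) and (b), which is precisely why the equivalence in (c) works cleanly when $V$ and $V'$ coincide.
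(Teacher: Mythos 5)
Your proposal is correct and follows essentially the same route as the paper's proof: part (a) via the observation $C_i(W,V,R;H)\subset C(W,R)$ applied to $h^{-T}V'\subset C_o(W',V',R';H)$, part (b) via the chain $h^{-T}V'\subset h^{-T}V\subset C_i(W,V,R;H)$ using $V'\subset V$, and part (c) by specializing to $V=V'=V_n$. Nothing is missing.
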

\begin{proof}
For the first statement, let $h\in K_{o}\left(W',V',R'\right)$ be
arbitrary. Note that the inclusion $C_{i}\left(W,V,R;H\right)\subset C\left(W,R\right)$
is an easy consequence of the definitions. Hence, 
\[
h^{-T}V'\subset C_{o}\left(W',V',R';H\right)\subset C_{i}\left(W,V,R;H\right)\subset C\left(W,R\right),
\]
which implies $h\in K_{i}\left(W,V',R\right)$.

For the second statement, observe 
\begin{align*}
C_{o}\left(W',V',R';H\right) & =\bigcup\left\{ h^{-T}V'\with h\in K_{o}\left(W',V',R'\right)\right\} \\
 & \subset\bigcup\left\{ h^{-T}V'\with h\in K_{i}\left(W,V,R\right)\right\} \\
 & \subset\bigcup\left\{ h^{-T}V\with h\in K_{i}\left(W,V,R\right)\right\} =C_{i}\left(W,V,R;H\right).
\end{align*}
Here, $V'\subset V$ was used in the last line.

For the remaining statements, it suffices to consider the weak case.
Assume that the dual action has the weak $\mathcal{V}$-cone approximation
property at $\xi$. For any $\xi$-neighborhood $W\subset S^{d-1}$
and $R>0$, this yields $n\in\mathbb{N}$, a $\xi$-neighborhood $W'\subset S^{d-1}$
and some $R'>0$ such that the inclusion $K_{o}\left(W',V_{n},R'\right)\subset K_{i}\left(W,V_{n},R\right)$
is satisfied. By part \ref{enu:KInclusionYieldsCInclusion} (with
$V':=V_{n}\subset V_{n}=:V$), this yields 
\[
C_{o}\left(W',V_{n},R';H\right)\subset C_{i}\left(W,V_{n},R;H\right),
\]
so that the dual action also has the weak geometric $\mathcal{V}$-cone
approximation property.

The converse is proved analogously via \ref{enu:CInclusionYieldsKInclusion}
(with $V'=V=V_{n}$).
\end{proof}
We remark that it is also possible to introduce global versions of
the geometric cone approximation properties, and to extend the equivalence
to the global versions.

\section{Examples}

\label{sect:examples}In this section, we discuss various examples
of dilation groups, and verify the technical conditions introduced
in the paper (wherever this is possible). All the examples considered
below belong to the irreducible setting, i.e., the dilation group
acts with a single open orbit and compact stabilizers. Thus Corollary
\ref{cor:char_wfset_orbit_weak_cone_approx} applies and yields wavelet
characterizations of regular directed points, as soon as the dual
action fulfils a certain list of conditions. In the single orbit case,
the task of verifying these conditions simplifies considerably: 
\begin{itemize}
\item The conditions listed in \ref{assume:proper_dual} are automatically
fulfilled. 
\item In view of Lemmas \ref{lem:SingleOrbitMicrolocalAdmissibility} and
\ref{lem:SingleOrbitConeApproximationProperty}, it is sufficient
to check microlocal admissibility and the (weak) cone approximation
property at a single, conveniently chosen point $\xi_{1}\in\mathcal{O}\cap S^{d-1}$.
\item With the weak cone approximation property already established, only
the first condition of microlocal admissibility needs to be checked,
by Lemma \ref{lem:verify_adm_from_cone}, at least if one is only
interested in $V_{n}$-microlocal admissibility for $n$ sufficiently
large (or if the dual action satisfies the strong cone approximation
property).

This task is further simplified by the fact that we may replace (in
a suitable sense) the set $K_{o}$ by the smaller set $K_{i}$ in
this condition, as shown in Lemma \ref{lem:MicrolocalAdmissibilityKiStattKo}
below.

\item Also, there is an easily checked necessary condition for validity
of the strong cone approximation property, provided by Lemma \ref{lem:anisotropy_necessary}.
\end{itemize}
The following examples will show that the remaining steps can indeed
be carried out in a variety of settings. But first, let us prove the
result mentioned above that the set $K_{o}$ may be replaced by the
set $K_{i}$ in the verification of microlocal admissibility:
\begin{lem}
\label{lem:MicrolocalAdmissibilityKiStattKo}Let $\mathcal{O}=H^{T}\xi_{0}$
be an open orbit of $H$ with associated compact stabilizers, let
$\xi\in S^{d-1}\cap\mathcal{O}$ and assume that the dual action of
$H$ has the weak $\mathcal{V}$-cone approximation property at $\xi$
for some nonincreasing family $\mathcal{V}=\left(V_{n}\right)_{n\in\mathbb{N}}$
with $\emptyset\neq V_{n}\Subset\mathcal{O}$.

Furthermore, assume that there exists a $\xi$-neighborhood $W_{0}\subset S^{d-1}\cap\mathcal{O}$
and some $R_{0}>0$ such that for all sufficiently large $n\in\mathbb{N}$,
there are constants $\alpha_{n}>0$ and $C_{n}>0$ such that 
\[
\left\Vert h^{-1}\right\Vert \le C_{n}\cdot\left\Vert h\right\Vert ^{-\alpha_{n}}
\]
holds for all $h\in K_{i}(W_{0},V_{n},R_{0})$.

Then there is some $n_{0}\in\mathbb{N}$ such that the dual action
of $H$ is globally $V_{n}$-microlocally admissible for all $n\geq n_{0}$.\end{lem}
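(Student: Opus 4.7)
The strategy is to reduce this statement to Lemma \ref{lem:verify_adm_from_cone} by upgrading the hypothesized estimate on $K_i(W_0,V_n,R_0)$ to the analogous estimate on $K_o(W',V_n,R')$ for some slightly smaller neighborhood $W'$ and larger $R'$. The bridge between these two estimates is supplied by the weak $\mathcal{V}$-cone approximation property, whose raison d'\^etre is precisely to guarantee inclusions of the form $K_o(W',V_n,R')\subset K_i(W_0,V_n,R_0)$.

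More concretely, I would first apply the weak $\mathcal{V}$-cone approximation property at $\xi$ to the neighborhood $W_0$ and radius $R_0$ supplied by the hypothesis. This yields some $n_1\in\mathbb{N}$, some $R'>0$ and some $\xi$-neighborhood $W'\subset S^{d-1}$ with
\[
K_o(W',V_{n_1},R')\subset K_i(W_0,V_{n_1},R_0),
\]
and by the monotonicity observation at the end of Lemma \ref{lem:ConeApproximationForSubsets}, the analogous inclusion persists for every $n\geq n_1$. Combining this with the hypothesized estimate $\|h^{-1}\|\leq C_n\|h\|^{-\alpha_n}$ on $K_i(W_0,V_n,R_0)$, which is valid for all $n$ larger than some threshold $N^\ast$, one obtains
\[
\|h^{-1}\|\leq C_n\|h\|^{-\alpha_n}\qquad\text{for all }h\in K_o(W',V_n,R')
\]
whenever $n\geq n_2:=\max\{n_1,N^\ast\}$. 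For such an $n$, this is exactly condition \ref{enu:NormOfInverseEstimateOnKo} of Definition \ref{defn:micro_regular} at $\xi$, with the data $(W',V_n,R')$ replacing the original $(W_0,V_n,R_0)$.

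At this point all the hypotheses of Lemma \ref{lem:verify_adm_from_cone} are satisfied (with the same nonincreasing family $\mathcal{V}$, the same base point $\xi$, and $V_{n_2}$ in the role of the set ``$V_n$'' there). Invoking that lemma produces some $n_0\in\mathbb{N}$ for which the dual action is globally $V_n$-microlocally admissible for every $n\geq n_0$, which is the desired conclusion. There is no genuine obstacle here: the only substantive step is the transfer from $K_i$ to $K_o$, and the weak cone approximation property is tailor-made to perform it; everything else is a packaging of Lemmas \ref{lem:ConeApproximationForSubsets} and \ref{lem:verify_adm_from_cone}.
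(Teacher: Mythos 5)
Your proposal is correct and follows essentially the same route as the paper's own proof: apply the weak $\mathcal{V}$-cone approximation property at $\xi$ to $(W_0,R_0)$ to obtain $K_o(W',V_{n_1},R')\subset K_i(W_0,V_{n_1},R_0)$, extend this to all $n\geq n_1$ via Lemma \ref{lem:ConeApproximationForSubsets}, transfer the hypothesized norm estimate from $K_i(W_0,V_n,R_0)$ to $K_o(W',V_n,R')$, and conclude with Lemma \ref{lem:verify_adm_from_cone}. No gaps.
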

\begin{rem*}
In the case of the $V_{0}$-cone approximation property, i.e. $\mathcal{V}=\left(V_{0}\right)_{n\in\mathbb{N}}$,
this lemma implies that we can indeed replace $K_{o}$ by $K_{i}$
for the verification of (the first condition of) microlocal admissibility.\end{rem*}
\begin{proof}
By the weak cone approximation property (cf. Definition \ref{def:ConeApproximationProperties}),
there is some $n_{1}\in\mathbb{N}$, some $\xi$-neighborhood $W'\subset S^{d-1}\cap\mathcal{O}$
and some $R'>0$ such that 
\[
K_{o}\left(W',V_{n_{1}},R'\right)\subset K_{i}\left(W_{0},V_{n_{1}},R_{0}\right).
\]
By Lemma \ref{lem:ConeApproximationForSubsets}, this yields $K_{o}\left(W',V_{n},R'\right)\subset K_{i}\left(W_{0},V_{n},R_{0}\right)$
for all $n\geq n_{1}$.

Making use of the assumptions, we derive $\left\Vert h^{-1}\right\Vert \leq C_{n}\cdot\left\Vert h\right\Vert ^{-\alpha_{n}}$
for all $h\in K_{o}\left(W',V_{n},R'\right)$ and all sufficiently
large $n\in\mathbb{N}$. It remains to invoke Lemma \ref{lem:verify_adm_from_cone}
to conclude the proof.
\end{proof}

\subsection{The similitude group}

The similitude group was the first dilation group in higher dimensions
for which continuous wavelet transforms were studied \cite{Mu}; for
the study of wavefront sets, it was employed for example in \cite{Moritoh_1995,Pilipovic_et_al_2006}.
The group is given by $H=\mathbb{R}^{+}\cdot SO\left(d\right)$. We
only consider the case $d\geq2$. In this case, $H$ has the unique
open dual orbit $\mathbb{R}^{d}\setminus\{0\}$, on which it acts
with compact stabilizers. As it contains all scalar dilations, we
know by Lemma \ref{lem:anisotropy_necessary} that the best we can
expect of $H$ is the weak cone approximation property. We will now
verify this, together with microlocal admissibility. Hence, by Corollary
\ref{cor:char_wfset_orbit_weak_cone_approx}, $H$ allows a multiple
wavelet characterization of regular directed points. This result partly
generalizes \cite{Pilipovic_et_al_2006}.

\subsubsection{$H$ fulfils the weak cone approximation property}

We write arbitrary elements $h\in H$ as $h=a\vartheta$, with $a>0$
and $\vartheta\in SO\left(d\right)$. We pick $\xi_{1}=(1,0,\ldots,0)^{T}$,
and define the sequence $\mathcal{V}=(V_{n})_{n\in\mathbb{N}}$ by
$V_{n}=B_{1/n}(\xi_{1})$.

Now let $W\subset S^{d-1}$ be an open neighborhood of $\xi_{1}$
and let $R>0$. Choose $\epsilon>0$ with $B_{\epsilon}(\xi_{1})\cap S^{d-1}\subset W$.
Let $W'=B_{\epsilon'}(\xi_{1})\cap S^{d-1}$, $R'>0$ and $n\ge2$;
our aim is to specify $\epsilon',R',n$ (only depending on $\varepsilon,R$)
in a way that ensures $K_{o}(W',V_{n},R')\subset K_{i}(W,V_{n},R)$.

Hence, assume that $h=a\vartheta\in K_{o}(W',V_{n},R')$. This ensures
existence of some $\xi\in V_{n}$ with $a^{-1}\vartheta\xi\in C(W',R')$
because of $\vartheta^{-T}=\vartheta$.

In particular, $R'<|a^{-1}\vartheta\xi|$, which entails via $\xi\in V_{n}$
that 
\[
a^{-1}>\frac{R'}{1+1/n}~.
\]
For $\xi'\in V_{n}$ arbitrary, we then obtain 
\begin{equation}
|a^{-1}\vartheta\xi'|\ge a^{-1}(1-1/n)>\frac{1-1/n}{1+1/n}R'=\frac{n-1}{n+1}R'~.\label{eqn:est_sim_a}
\end{equation}

Furthermore, the fact that $a^{-1}\vartheta\xi\in C(W',R')$ implies
\[
\frac{\vartheta\xi}{|\xi|}=\frac{a^{-1}\vartheta\xi}{|a^{-1}\vartheta\xi|}\in W'=B_{\epsilon'}(\xi_{1})\cap S^{d-1}~.
\]

In addition, we have
\begin{align*}
\left|\frac{\vartheta\xi}{\left|\xi\right|}-\frac{\vartheta\xi'}{\left|\xi'\right|}\right| & \le\frac{\left|\vartheta(\xi-\xi')\right|}{\left|\xi\right|}+\left|\vartheta\xi'\right|\left|\frac{1}{\left|\xi'\right|}-\frac{1}{\left|\xi\right|}\right|\\
 & \le\frac{2\left|\xi-\xi'\right|}{\left|\xi\right|}\le4\frac{1/n}{1-1/n}=\frac{4}{n-1},
\end{align*}
leading to 
\begin{equation}
\left|\xi_{1}-\frac{a^{-1}\vartheta\xi'}{\left|a^{-1}\vartheta\xi'\right|}\right|\le\left|\xi_{1}-\frac{\vartheta\xi}{\left|\xi\right|}\right|+\frac{4}{n-1}<\varepsilon'+\frac{4}{n-1}.\label{eqn:est_sim_b}
\end{equation}
Now (\ref{eqn:est_sim_a}) and (\ref{eqn:est_sim_b}) combined yield
that whenever 
\[
\epsilon'<\frac{\epsilon}{2},\quad\frac{4}{n-1}<\frac{\epsilon}{2}\quad\mbox{ and }\quad\frac{n-1}{n+1}R'>R,
\]
it follows that $a^{-1}\vartheta\xi'\in C(W,R)$, for all $\xi'\in V_{n}$.
This means that $h\in C_{i}(W,V_{n},R)$, and the weak $\mathcal{V}$-cone
approximation property is shown.

\subsubsection{The dual action of $H$ is microlocally admissible}

In view of the previous subsection, it remains to verify condition
\ref{defn:micro_regular}\ref{enu:NormOfInverseEstimateOnKo}. But
this condition is implied by the identitiy 
\[
\left\Vert h^{-1}\right\Vert =\|(a\vartheta)^{-1}\|=\|a^{-1}\vartheta^{-1}\|=a^{-1}=\|a\vartheta\|^{-1}=\left\Vert h\right\Vert ^{-1}~,
\]
valid for \emph{all} $h=a\vartheta\in H$.

By Lemma \ref{lem:verify_adm_from_cone}, this implies that there
is some $n_{0}\in\mathbb{N}$ such that the dual action of $H$ is
$V_{n}$-microlocally admissible for all $n\geq n_{0}$.

\subsection{The diagonal group}

The diagonal group is given by 
\[
H=\left\{ \left(\begin{array}{cccc}
a_{1}\\
 & a_{2}\\
 &  & \ddots\\
 &  &  & a_{d}
\end{array}\right)\in\mathbb{R}^{d\times d}\,:\,\prod_{i=1}^{d}a_{i}\not=0\right\} ~.
\]
The dual action of $H$ has the single open orbit $\left(\mathbb{R}^{\ast}\right)^{d}=\left(\mathbb{R}\setminus\left\{ 0\right\} \right)^{d}=H^{T}\left(1,\dots,1\right)^{T}$
and acts on this orbit with trivial (hence compact) stabilizers.

To our knowledge, this group has not yet been investigated for its
properties to characterize wavefront sets. We will show that it allows
a multiple wavelet characterization, noting that again by Lemma \ref{lem:anisotropy_necessary},
$H$ does not fulfil the cone approximation property, and hence one
cannot hope for single wavelet characterizations (at least not in
the sense of Corollary \ref{cor:char_wfset_orbit_strong_cone_approx},
cf. Lemma \ref{lem:IsotropicGroupsDoNotAllowSingleWaveletCharacterization}).

\subsubsection{$H$ fulfils the weak cone approximation property}

We verify the weak cone approximation property at $\xi_{0}=\frac{1}{\sqrt{d}}(1,1,\ldots,1)^{T}$.
Given $\epsilon>0$, we let 
\[
U_{\varepsilon}=\left\{ \xi'\in S^{d-1}\with\forall i=1,\ldots,d:\frac{1}{1+\varepsilon}<\sqrt{d}\xi_{i}'<\left(1+\varepsilon\right)\right\} \subset\left(0,\infty\right)^{d}.
\]
Let $W\subset S^{d-1}$ denote a neighborhood of $\xi_{0}$, and let
$R>0$. Then there exists $\varepsilon>0$ with $U_{\varepsilon}\subset W$.

We are going to establish the weak cone approximation property with
respect to the sequence $\mathcal{V}=(V_{n})_{n\in\mathbb{N}}$ defined
by 
\[
V_{n}=\left\{ \xi\in\mathbb{R}^{d}\with\frac{n}{n+1}<|\xi|<\frac{n+1}{n}\text{ and }\frac{\xi}{|\xi|}\in U_{1/n}\right\} \subset\left(0,\infty\right)^{d},
\]
for $n\in\mathbb{N}$.

For this purpose, let $W'=U_{\varepsilon'}$, and fix $R'>0$ and
$n\in\mathbb{N}$. We will show that these three parameters can be
chosen in such a way that $K_{o}\left(W',V_{n},R'\right)\subset K_{i}\left(W,V,R\right)$
holds. To see this, let $h={\rm diag}(a_{1},\ldots,a_{d})\in K_{o}(W',V_{n},R')$.
This simply means that there exists some $\xi\in V_{n}$ with $h^{-T}\xi\in C(W',R')$.
Let $\xi'\in V_{n}$ be arbitrary. We have to show that this implies
$h^{-T}\xi'\in C\left(W,R\right)$ (for suitable values of $W',R',n$
depending only on $\varepsilon,R$).

First note that $\xi\in V_{n}$ entails, via $\frac{n}{n+1}<|\xi|<\frac{n+1}{n}$
and $\frac{\xi}{|\xi|}\in U_{1/n}$, that 
\begin{equation}
\forall i=1,\ldots,d:\qquad\frac{1}{\sqrt{d}}\left(\frac{n}{n+1}\right)^{2}<\xi_{i}<\frac{1}{\sqrt{d}}\left(\frac{n+1}{n}\right)^{2}~.\label{eqn:diag_est_xi}
\end{equation}
The same estimate also holds for $\xi'$ instead of $\xi$.

Together with $h^{-T}\xi\in C\left(W',R'\right)$, equation \eqref{eqn:diag_est_xi}
implies
\[
R'<\left|\left(a_{1}^{-1}\xi_{1},\dots,a_{d}^{-1}\xi_{d}\right)^{T}\right|<\frac{1}{\sqrt{d}}\cdot\left(\frac{n+1}{n}\right)^{2}\cdot\left|\left(a_{1}^{-1},\dots,a_{d}^{-1}\right)^{T}\right|.
\]
In combination with \eqref{eqn:diag_est_xi} (for $\xi'$ instead
of $\xi$), this immediately yields 
\begin{align}
\gamma' & :=\left|h^{-T}\xi'\right|=\left|\left(a_{1}^{-1}\xi_{1}',\ldots,a_{d}^{-1}\xi_{d}'\right)^{T}\right|\nonumber \\
 & >\frac{1}{\sqrt{d}}\cdot\left(\frac{n}{n+1}\right)^{2}\cdot\left|\left(a_{1}^{-1},\dots,a_{d}^{-1}\right)^{T}\right|>\left(\frac{n}{n+1}\right)^{4}\cdot R'~.\label{eqn:est_diag_ca1}
\end{align}

Now let $\gamma:=\left|h^{-T}\xi\right|=\left|\left(a_{1}^{-1}\xi_{1},\dots,a_{d}^{-1}\xi_{d}\right)\right|>0$.
Using $h^{-T}\xi\in C\left(W',R'\right)$, we arrive at
\[
\frac{1}{\gamma}\cdot\left(a_{1}^{-1}\xi_{1},\dots,a_{d}^{-1}\xi_{d}\right)=\frac{h^{-T}\xi}{\left|h^{-T}\xi\right|}\in W'=U_{\varepsilon'},
\]
which implies
\begin{equation}
\frac{1}{1+\varepsilon'}<\frac{\sqrt{d}a_{i}^{-1}\xi_{i}}{\gamma}<1+\varepsilon'\qquad\forall i\in\left\{ 1,\dots,d\right\} .\label{eq:DiagonalGroupXiNormalizedEstimate}
\end{equation}
Using this together with equation \eqref{eqn:diag_est_xi} (for $\xi$
as well as $\xi'$), we derive
\begin{align*}
a_{i}^{-1}\xi_{i}' & =\frac{\sqrt{d}a_{i}^{-1}\xi_{i}}{\gamma}\cdot\frac{\xi_{i}'}{\xi_{i}}\cdot\frac{\gamma}{\sqrt{d}}\\
 & <\left(1+\varepsilon'\right)\cdot\frac{\frac{1}{\sqrt{d}}\cdot\left(\frac{n+1}{n}\right)^{2}}{\frac{1}{\sqrt{d}}\cdot\left(\frac{n}{n+1}\right)^{2}}\cdot\frac{\gamma}{\sqrt{d}}\\
 & =\underbrace{\left(1+\varepsilon'\right)\cdot\left(\frac{n+1}{n}\right)^{4}}_{=:C_{\varepsilon',n}}\cdot\frac{\gamma}{\sqrt{d}},
\end{align*}
as well as
\[
a_{i}^{-1}\xi_{i}'=\frac{\sqrt{d}a_{i}^{-1}\xi_{i}}{\gamma}\cdot\frac{\xi_{i}'}{\xi_{i}}\cdot\frac{\gamma}{\sqrt{d}}>\frac{1}{1+\varepsilon'}\cdot\frac{\frac{1}{\sqrt{d}}\cdot\left(\frac{n}{n+1}\right)^{2}}{\frac{1}{\sqrt{d}}\cdot\left(\frac{n+1}{n}\right)^{2}}\cdot\frac{\gamma}{\sqrt{d}}=C_{\varepsilon',n}^{-1}\cdot\frac{\gamma}{\sqrt{d}}.
\]

This implies
\begin{align*}
C_{\varepsilon',n}^{-1}\cdot\gamma & =C_{\varepsilon',n}^{-1}\cdot\sqrt{\sum_{i=1}^{d}\left(\frac{\gamma}{\sqrt{d}}\right)^{2}}<\sqrt{\sum_{i=1}^{d}\left(a_{i}^{-1}\xi_{i}'\right)^{2}}=\gamma'\\
 & =\sqrt{\sum_{i=1}^{d}\left(a_{i}^{-1}\xi_{i}'\right)^{2}}<C_{\varepsilon',n}\cdot\sqrt{\sum_{i=1}^{d}\left(\frac{\gamma}{\sqrt{d}}\right)^{2}}=C_{\varepsilon',n}\cdot\gamma,
\end{align*}
which -- together with equations \eqref{eq:DiagonalGroupXiNormalizedEstimate}
and \eqref{eqn:diag_est_xi} (for $\xi$ as well as $\xi'$) -- finally
yields
\begin{align*}
\sqrt{d}\left(\frac{h^{-T}\xi'}{\left|h^{-T}\xi'\right|}\right)_{i} & =\frac{\sqrt{d}a_{i}^{-1}\xi_{i}'}{\gamma'}=\frac{\sqrt{d}a_{i}^{-1}\xi_{i}}{\gamma}\cdot\frac{\gamma}{\gamma'}\cdot\frac{\xi_{i}'}{\xi_{i}}\\
 & <\left(1+\varepsilon'\right)\cdot C_{\varepsilon',n}\cdot\frac{\frac{1}{\sqrt{d}}\cdot\left(\frac{n+1}{n}\right)^{2}}{\frac{1}{\sqrt{d}}\cdot\left(\frac{n}{n+1}\right)^{2}}\\
 & =\left(1+\varepsilon'\right)^{2}\cdot\left(\frac{n+1}{n}\right)^{8}.
\end{align*}
A completely analogous computation also shows 
\[
\sqrt{d}\left(\frac{h^{-T}\xi'}{\left|h^{-T}\xi'\right|}\right)_{i}>\frac{1}{\left(1+\varepsilon'\right)^{2}\cdot\left(\frac{n+1}{n}\right)^{8}}.
\]
Hence (cf. also equation \eqref{eqn:est_diag_ca1}), we have $h^{-T}\xi'\in C\left(W,R\right)$
as soon as $\left(1+\varepsilon'\right)^{2}\cdot\left(\frac{n+1}{n}\right)^{8}<1+\varepsilon$
and $\left(\frac{n}{n+1}\right)^{4}\cdot R'>R$ hold. But it is easy
to see that this is true for suitable $n=n\left(\varepsilon\right)$,
$\varepsilon'=\varepsilon'\left(\varepsilon\right)$ and $R'=R'\left(R\right)$.

As $h\in K_{o}\left(W',V_{n},R'\right)$ and $\xi'\in V_{n}$ were
arbitrary, this shows that each $h\in K_{o}\left(W',V_{n},R'\right)$
maps $V_{n}$ into $C\left(W,R\right)$. Hence the weak $\mathcal{V}$-cone
approximation property holds.

\subsubsection{The dual action of $H$ is microlocally admissible}

Since we already established the weak cone approximation property,
Lemma \ref{lem:MicrolocalAdmissibilityKiStattKo} shows that it is
sufficient to verify the estimate required in Definition \ref{defn:micro_regular}\ref{enu:NormOfInverseEstimateOnKo}
on the smaller set $K_{i}(W,V_{n},R)$,  for the fixed $\xi_{0}$-neighborhood
$W:=U_{1}$ and $R=1$, but for \emph{all} sufficiently large $n\in\mathbb{N}$.
This will then yield $V_{n}$-microlocal admissibility for all sufficiently
large $n\in\mathbb{N}$.

So let $h={\rm diag}(a_{1},\ldots,a_{d})\in K_{i}(W,V_{n},R)$. Then
the entries of $h$ are necessarily positive, because of 
\[
\frac{1}{\sqrt{d}}\left(a_{1}^{-1},\dots,a_{d}^{-1}\right)^{T}=h^{-T}\xi_{0}\in h^{-T}V_{n}\subset C\left(W,R\right)\subset\left(0,\infty\right)^{d}.
\]
This also implies
\[
\frac{\left(a_{1}^{-1},\dots,a_{d}^{-1}\right)^{T}}{\left|\left(a_{1}^{-1},\dots,a_{d}^{-1}\right)^{T}\right|}=\frac{\frac{1}{\sqrt{d}}\left(a_{1}^{-1},\dots,a_{d}^{-1}\right)^{T}}{\left|\frac{1}{\sqrt{d}}\left(a_{1}^{-1},\dots,a_{d}^{-1}\right)^{T}\right|}\in W=U_{1},
\]
which yields the estimates 
\begin{align*}
 & \frac{1}{2}<\frac{\sqrt{d}\cdot a_{i}^{-1}}{\left|\left(a_{1}^{-1},\dots,a_{d}^{-1}\right)\right|}<2\\
\Leftrightarrow & \frac{\sqrt{d}}{2\cdot\left|\left(a_{1}^{-1},\dots,a_{d}^{-1}\right)\right|}<a_{i}<\frac{2\sqrt{d}}{\left|\left(a_{1}^{-1},\dots,a_{d}^{-1}\right)\right|}.
\end{align*}
In particular, we get 
\[
\frac{\max_{i}\, a_{i}}{\min_{i}\, a_{i}}\le4~,
\]
and thus 
\[
\left\Vert h^{-1}\right\Vert =\frac{1}{\min_{i}a_{i}}\le\frac{4}{\max_{i}a_{i}}=4\cdot\left\Vert h\right\Vert ^{-1}~.
\]
By Lemma \ref{lem:MicrolocalAdmissibilityKiStattKo}, we conclude
that the dual action of $H$ is globally mocrolocally admissible for
all sufficiently large $n\in\mathbb{N}$.

\subsection{The shearlet groups}

The shearlet transform in two dimensions was introduced in \cite{KuLa},
specifically for the purpose of characterizing wavefront sets. The
group-theoretical background of this transform was realized later
in \cite{DaKuStTe}.

The higher-dimensional generalizations were introduced in \cite{DaStTe10},
and further investigated (e.g.) in \cite{DaStTe12,CzaKi12}. In fact,
there is a whole family of shearlet groups in dimension $d\ge2$,
parameterized by a vector of real exponents $(c_{2},\ldots,c_{d})$,
usually taken between $0$ and $1$, see the more detailed description
below. It is the purpose of this subsection to establish both the
microlocal admissibility and the (strong) cone approximation property
for this class of groups, and thus to establish their suitability
for the characterization of regular directed points using a \emph{single}
wavelet. For $d=2$, it was shown in \cite{KuLa} (for the special
case $c_{2}=1/2$) that the full wavefront set of a tempered distribution
$u$ could be characterized using the wavelet transform with respect
to the dilation group $H$, together with the wavelet transform associated
to a second dilation group, namely 
\[
H'=\left(\begin{array}{cc}
0 & 1\\
1 & 0
\end{array}\right)H\left(\begin{array}{cc}
0 & 1\\
1 & 0
\end{array}\right)~.
\]
An alternative treatment and proof of this result can be found in
\cite{Grohs_2011}. The reasoning employed in both sources was to
decompose the frequency space into a horizontal and a vertical cone,
and to use $H$ for the characterization of horizontal directions,
and $H'$ for the vertical directions. Our results considerably extend
these findings: They entail that in the case $d=2$, the group $H$
can be used to characterize all directions except $\pm(0,1)^{T}$,
and for a whole range of parameters $c_{2}$. These observations could
possibly be deduced by adapting the arguments in \cite{KuLa,Grohs_2011},
but were not noted there. For $d=3$ , the results obtained by Guo
and Labate in \cite{GuLa_12} can be partly understood as wavefront
set characterizations for certain classes of tempered distributions
(with exponents $c_{2}=c_{3}=1/2$). The characterization for general
distributions seems to be missing so far, and for $d>3$ we are not
aware of a previous source containing even partial results.

The shearlet group $H$ in dimension $d\ge2$ is given by\cite{DaStTe10}:
\[
H=\left\{ \pm\left(\begin{array}{cccc}
a & b_{2} & \ldots & b_{d}\\
 & a^{c_{2}}\\
 &  & \ddots\\
 &  &  & a^{c_{d}}
\end{array}\right):a>0\;,b_{2},\ldots,b_{d}\in\mathbb{R}\right\} ~~.
\]
Here $(c_{2},\ldots,c_{d})\in\mathbb{R}^{d-1}$ is a vector of exponents
$c_{i}\in(0,1)$, which may be interpreted as anisotropy parameters.
In view of Lemma \ref{lem:anisotropy_necessary}, the exponents should
not all be identically one. The group has a single open orbit 
\[
\mathcal{O}=H^{T}\cdot\left(1,0,\dots,0\right)^{T}=\mathbb{R}^{\ast}\times\mathbb{R}^{d-1}
\]
and acts with trivial (hence compact) stabilizers.

Throughout this subsection, we will assume that the exponents $c_{2},\ldots,c_{d}$
lie strictly between $0$ and $1$; this condition will allow to establish
both the cone approximation property and microlocal admissibility.
In fact, the first-mentioned condition requires $c_{i}<1$, whereas
the second one (in addition) also needs $c_{i}>0$. Note that the
resulting characterization of the wavefront set is only valid for
directions $\xi\in S^{d-1}$ which lie in $\mathcal{O}=\mathbb{R}^{\ast}\times\mathbb{R}^{d-1}$,
i.e. satisfying $\xi_{1}\not=0$. In order to capture the remaining
directions, one may resort to the trick used in \cite{KuLa,GuLa_12}
for dimensions two and three, and employ in addition modified shearlet
transforms that are obtained by cyclically permuting the coordinates.
Thus $d$ shearlet transforms (associated to different coordinate
shifts) suffice for a full characterization.

\subsubsection{$H$ fulfils the (strong) cone approximation property}

We will establish the $V_{0}$-cone approximation property at $\xi_{0}=(1,0,\ldots,0)^{T}$
for $V_{0}:=\left(1,2\right)\times B_{1}^{\mathbb{R}^{d-1}}\left(0\right)$
under the assumption $c:=\max\left\{ c_{2},\dots,c_{d}\right\} <1$.
For this purpose, we introduce, for $R>0$ and $0<\epsilon<1$ the
set 
\[
W_{\epsilon}=\left\{ \xi=(\xi_{1},\xi_{2},\ldots,\xi_{d})^{T}\in S^{d-1}\with|\xi_{1}-1|<\epsilon\right\} \subset S^{d-1}\cap\left(\left(0,\infty\right)\times\mathbb{R}^{d-1}\right)~,
\]
with associated cone 
\[
C(W_{\epsilon},R)=\left\{ \xi=(\xi_{1},\xi_{2},\ldots,\xi_{d})^{T}\in S^{d-1}\with\left|\frac{\xi_{1}}{\left|\xi\right|}-1\right|<\epsilon~,\left|\xi\right|>R\right\} \subset\left(0,\infty\right)\times\mathbb{R}^{d-1}~.
\]
Observe that $\xi\in W_{\epsilon}$ for $\epsilon\in\left(0,1\right)$
implies $\xi_{1}>1-\epsilon>0$ and hence 
\[
1=\left|\xi\right|^{2}\geq\left|\xi_{1}\right|^{2}+\left|\xi_{i}\right|^{2}>\left(1-\epsilon\right)^{2}+\left|\xi_{i}\right|^{2},
\]
which yields $\left|\xi_{i}\right|<\sqrt{1-\left(1-\epsilon\right)^{2}}\xrightarrow[\epsilon\downarrow0]{}0$,
so that the family $\left(W_{\epsilon}\right)_{0<\epsilon<1}$ is
a neighborhood base of $\xi_{0}=\left(1,0,\dots,0\right)^{T}$.

We are going to employ the following criterion for containment in
$C(W_{\epsilon},R)$, which holds for $0<\epsilon<1$: Given $v=(v_{1},v_{2},\ldots,v_{d})^{T}\in\mathbb{R}^{d}$,
with $v_{1}>0$, then 
\begin{equation}
v\in C(W_{\epsilon},R)\:\Longleftrightarrow\:|v|>R\text{ and }\frac{|(v_{2},\ldots,v_{d})^{T}|}{v_{1}}<\frac{\sqrt{2\epsilon-\epsilon^{2}}}{1-\epsilon}~.\label{eqn:cont_We}
\end{equation}
To see this, we clearly only need to prove equivalence of $v\in C\left(W_{\varepsilon}\right)$
to the last condition given in equation \eqref{eqn:cont_We}. As both
of these conditions are invariant under rescaling with positive scalars,
we may assume $\left|v\right|=1$. This implies $\left|\left(v_{2},\dots,v_{d}\right)^{T}\right|=\sqrt{1-v_{1}^{2}}$,
so that the last condition in equation \eqref{eqn:cont_We} is equivalent
to
\begin{eqnarray*}
\frac{\sqrt{1-v_{1}^{2}}}{v_{1}}<\frac{\sqrt{2\epsilon-\epsilon^{2}}}{1-\epsilon} & \overset{\epsilon<1,\, v_{1}>0}{\Longleftrightarrow} & \left(1-\epsilon\right)\sqrt{1-v_{1}^{2}}<v_{1}\cdot\sqrt{2\epsilon-\epsilon^{2}}\\
 & \overset{\epsilon<1,\, v_{1}>0}{\Longleftrightarrow} & \left(1-\epsilon\right)^{2}\cdot\left(1-v_{1}^{2}\right)<v_{1}^{2}\cdot\left(2\epsilon-\epsilon^{2}\right)\\
 & \Longleftrightarrow & \left(1-\epsilon\right)^{2}<v_{1}^{2}\cdot\left(2\epsilon-\epsilon^{2}+\left(1-\epsilon\right)^{2}\right)\\
 & \Longleftrightarrow & \left(1-\epsilon\right)^{2}<v_{1}^{2}\\
 & \overset{\epsilon<1,\, v_{1}>0}{\Longleftrightarrow} & 1-\epsilon<v_{1},
\end{eqnarray*}
which is equivalent to $1-v_{1}<\epsilon$. But because of $v_{1}\leq\left|v\right|=1$,
this is equivalent to $\left|1-v_{1}\right|<\epsilon$, i.e. to $v\in W_{\epsilon}$.
Because of $\left|v\right|=1$, this is equivalent to $v\in C\left(W_{\epsilon}\right)$.

To establish the $V_{0}$-cone approximation property, it suffices
(thanks to Remark \ref{rem:incl_prop} and the fact that $\left(W_{\epsilon}\right)_{0<\epsilon<1}$
is a neighborhood base of $\xi_{0}$) to prove, for any given $\epsilon,R>0$,
the existence of $\epsilon',R'>0$ with $K_{o}(W_{\epsilon'},V_{0},R')\subset K_{i}(W_{\epsilon},V_{0},R)$.
To this end, we consider arbitrary $\epsilon'<1/2$ and $R'>4$, and
derive estimates for the entries of $h\in K_{o}(W_{\epsilon'},V_{0},R')$,
which will allow us to prove the desired inclusion under suitable
conditions on $\epsilon',R'$ (depending only on $\epsilon,R$).

So assume that $h\in K_{o}(W_{\epsilon'},V_{0},R')$ is given, where
\begin{equation}
h=\pm h(a,b)=\pm\left(\begin{array}{cccc}
a & b_{2} & \ldots & b_{d}\\
 & a^{c_{2}}\\
 &  & \ddots\\
 &  &  & a^{c_{d}}
\end{array}\right)\label{eqn:h(a,b)}
\end{equation}
with $a>0$ and $b=(b_{2},\ldots,b_{d})\in\mathbb{R}^{d-1}$. The
inverse transpose oh $h$ is computed as 
\begin{equation}
h^{-T}=\pm\left(\begin{array}{cccc}
a^{-1}\\
-a^{-1-c_{2}}b_{2} & a^{-c_{2}}\\
 &  & \ddots\\
-a^{-1-c_{d}}b_{d} &  &  & a^{-c_{d}}
\end{array}\right)~.\label{eqn:h_minus_transp}
\end{equation}
The assumption $h\in K_{o}(W_{\epsilon'},V_{0},R')$ yields the existence
of $\xi=(\xi_{1},\ldots,\xi_{d})\in V_{0}\subset\left(0,\infty\right)\times\mathbb{R}^{d-1}$
such that 
\[
h^{-T}\xi=\pm\left(\begin{array}{c}
a^{-1}\xi_{1}\\
\xi'
\end{array}\right)\in C\left(W_{\varepsilon'},R'\right)\subset\left(0,\infty\right)\times\mathbb{R}^{d-1},
\]
with $\xi'\in\mathbb{R}^{d-1}$ given by 
\[
\xi'=\left(\begin{array}{c}
-b_{2}a^{-1-c_{2}}\xi_{1}+a^{-c_{2}}\xi_{2}\\
\vdots\\
-b_{d}a^{-1-c_{d}}\xi_{1}+a^{-c_{d}}\xi_{d}
\end{array}\right)~.
\]
Because of $\pm a^{-1}\xi_{1}>0$ and $\xi_{1}>0$, as well as $a>0$,
we can rule out the negative sign for the ``$\pm$''. Furthermore,
the assumption $h^{-T}\xi\in C(W_{\epsilon'},R')$ entails 
\begin{equation}
\frac{a^{-1}\xi_{1}}{|h^{-T}\xi|}>1-\epsilon'~.\label{eqn:est_a}
\end{equation}
This, combined with $|h^{-T}\xi|>R'$, $0<\epsilon'<\frac{1}{2}$
and $\xi_{1}\in(1,2)$ as well as $R'>4$, yields 
\begin{equation}
a^{-1}>\frac{1-\epsilon'}{2}R'>\frac{R'}{4}>1~,\label{eqn:est_a_2}
\end{equation}
and hence $0<a<1$.

On the other hand, an application of equation (\ref{eqn:cont_We})
(noting that $\left(h^{-T}\xi\right)_{1}=a^{-1}\xi_{1}>0$ and $h^{-T}\xi\in C\left(W_{\epsilon'},R'\right)$),
together with $\xi_{1}\in\left(1,2\right)$ implies 
\begin{equation}
|\xi'|<a^{-1}\xi_{1}\frac{\sqrt{2\epsilon'-\left(\epsilon'\right)^{2}}}{1-\epsilon'}<a^{-1}\frac{2\sqrt{2\epsilon'-\left(\epsilon'\right)^{2}}}{1-\epsilon'}~.\label{eqn:est_xip}
\end{equation}

These inequalities can be combined to obtain a useful estimate involving
the shearing vector $(b_{2},\ldots,b_{d-1})^{T}$: We recall $c=\max\left\{ c_{2},\ldots,c_{d}\right\} <1$,
and get 
\begin{align}
\left|a\left(\begin{array}{c}
b_{2}a^{-1-c_{2}}\\
\vdots\\
b_{d}a^{-1-c_{d}}
\end{array}\right)\right| & =\left|\frac{a}{\xi_{1}}\left(\begin{array}{c}
b_{2}a^{-1-c_{2}}\xi_{1}\\
\vdots\\
b_{d}a^{-1-c_{d}}\xi_{1}
\end{array}\right)\right|\nonumber \\
 & \le\left|\frac{a}{\xi_{1}}\xi'\right|+\left|\frac{a}{\xi_{1}}\left(\begin{array}{c}
a^{-c_{2}}\xi_{2}\\
\vdots\\
a^{-c_{d}}\xi_{d}
\end{array}\right)\right|\nonumber \\
 & \le a|\xi'|+a^{1-c}\left|\left(\begin{array}{c}
\xi_{2}\\
\vdots\\
\xi_{d}
\end{array}\right)\right|\nonumber \\
 & \le\frac{2\sqrt{2\epsilon'-\left(\epsilon'\right)^{2}}}{1-\epsilon'}+a^{1-c}~.\label{eqn:est_shearvec}
\end{align}
Here the penultimate inequality used $\xi_{1}>1$ and $a\le1$ as
well as $c<1$, and the last inequality used $|(\xi_{2},\ldots,\xi_{d})^{T}|<1$
by definition of $V_{0}$, as well as equation (\ref{eqn:est_xip}).

Now let $\widetilde{\xi}=(\widetilde{\xi_{1}},\widetilde{\xi}_{2},\ldots,\widetilde{\xi}_{d})^{T}\in V_{0}=\left(1,2\right)\times B_{1}^{\mathbb{R}^{d-1}}\left(0\right)$
be arbitrary. Then 
\[
h^{-T}\widetilde{\xi}=\left(\begin{array}{c}
a^{-1}\widetilde{\xi_{1}}\\
\xi''
\end{array}\right)\in\left(0,\infty\right)\times\mathbb{R}^{d-1}~,
\]
with 
\[
\xi''=\left(\begin{array}{c}
-b_{2}a^{-1-c_{2}}\widetilde{\xi}_{1}+a^{-c_{2}}\widetilde{\xi}_{2}\\
\vdots\\
-b_{d}a^{-1-c_{d}}\widetilde{\xi}_{1}+a^{-c_{d}}\widetilde{\xi}_{d}
\end{array}\right)\in\mathbb{R}^{d-1}~.
\]

We first observe that choosing $R'>\max\left\{ 4,4R\right\} $ ensures,
via equation (\ref{eqn:est_a_2}), that 
\[
R<\frac{R'}{4}<a^{-1}<a^{-1}\widetilde{\xi}_{1}<\left|h^{-T}\widetilde{\xi}\right|.
\]
In order to apply criterion \eqref{eqn:cont_We}, we estimate
\begin{eqnarray*}
\frac{\left|\left(\left(h^{-T}\widetilde{\xi}\right)_{2},\dots,\left(h^{-T}\widetilde{\xi}\right)_{d}\right)\right|}{\left(h^{-T}\widetilde{\xi}\right)_{1}}=\frac{|\xi''|}{a^{-1}\widetilde{\xi}_{1}} & \le & \frac{a}{\widetilde{\xi}_{1}}\left|\left(\begin{array}{c}
-b_{2}a^{-1-c_{2}}\widetilde{\xi}_{1}\\
\vdots\\
-b_{d}a^{-1-c_{d}}\widetilde{\xi}_{1}
\end{array}\right)\right|+\frac{a}{\widetilde{\xi}_{1}}\left|\left(\begin{array}{c}
a^{-c_{2}}\widetilde{\xi}_{2}\\
\vdots\\
a^{-c_{d}}\widetilde{\xi}_{d}
\end{array}\right)\right|\\
 & \le & \frac{2\sqrt{2\epsilon'-\left(\epsilon'\right)^{2}}}{1-\epsilon'}+2\cdot a^{1-c}~,
\end{eqnarray*}
where the last inequality employed (\ref{eqn:est_shearvec}), together
with $1<\widetilde{\xi}_{1}<2$ and $|(\widetilde{\xi}_{2},\ldots,\widetilde{\xi_{d}})^{T}|<1$
by definition of $V_{0}$. Observe that equation \eqref{eqn:est_a_2},
together with $c-1<0$ implies 
\[
a^{1-c}=\left(a^{-1}\right)^{c-1}<\left(\frac{R'}{4}\right)^{c-1}=4^{1-c}\cdot\left(R'\right)^{c-1}.
\]
Thus, by invoking criterion (\ref{eqn:cont_We}), we see that choosing
$R'>\max\left\{ 4,4R\right\} $ sufficiently large and $\epsilon'>0$
sufficiently small to fulfil 
\[
\frac{2\sqrt{2\epsilon'-\left(\epsilon'\right)^{2}}}{1-\epsilon'}+2\cdot4^{1-c}\cdot\left(R'\right)^{c-1}<\frac{\sqrt{2\epsilon-\epsilon^{2}}}{1-\epsilon}~
\]
ensures $h^{-T}\widetilde{\xi}\in C(W_{\epsilon},R)$ for all $\widetilde{\xi}\in V_{0}$
and all $h\in K_{o}(W_{\epsilon'},V_{0},R')$. (Observe that $c<1$
allows such a choice of $R'$.) Thus, the global $V_{0}$-cone approximation
property is established (cf. Lemma \ref{lem:SingleOrbitConeApproximationProperty}).

\subsubsection{The dual action of $H$ is $V_{0}$-microlocally admissible}

Here, we impose the additional assumption $c':=\min\left\{ c_{2},\dots,c_{d}\right\} >0$.
By Lemma \ref{lem:MicrolocalAdmissibilityKiStattKo}, the fact that
we already established the $V_{0}$-cone approximation property allows
us to verify condition \ref{defn:micro_regular} \ref{enu:NormOfInverseEstimateOnKo}
on the smaller set $K_{i}$ instead of $K_{o}$. Thus, it suffices
to show that there exist $\alpha_{1}>0$ and $C>0$ such that 
\[
\left\Vert h^{-1}\right\Vert \le C\cdot\left\Vert h\right\Vert ^{-\alpha_{1}}
\]
holds for all $h=\pm h(a,b)\in K_{i}(W_{\epsilon},V_{0},R)$, for
some $\epsilon<1/2$ and $R>4$. Here we use the notation of the previous
subsection. Our choice of $\epsilon$ and $R$ then entails $a<1$
(cf. equation \eqref{eqn:est_a_2}) and rules out the negative sign
for the ``$\pm$''.

Let $v_{0}:=(3/2,0,\ldots,0)\in V_{0}$. Then $h^{-T}v_{0}\in C(W_{\epsilon},R)$
because of $h\in K_{i}(W_{\epsilon},V_{0},R)$. We recall 
\[
\left(h(a,b)\right)^{-1}=h\left(a^{-1},(-a^{-1-c_{2}}b_{2},\ldots,-a^{-1-c_{d}}b_{d})\right)
\]
and thus 
\[
h^{-T}v_{0}=\left(\begin{array}{c}
\frac{3}{2}a^{-1}\\
-\frac{3}{2}b_{2}a^{-1-c_{2}}\\
\vdots\\
-\frac{3}{2}b_{d}a^{-1-c_{d}}
\end{array}\right)~.
\]
Now equation (\ref{eqn:cont_We}) (together with $\left(h^{-T}v_{0}\right)_{1}=\frac{3}{2}a^{-1}>0$
and $h^{-T}v_{0}\in C\left(W_{\epsilon},R\right)$) yields the estimate
\begin{equation}
C_{1}:=\frac{\sqrt{2\epsilon-\epsilon^{2}}}{1-\epsilon}>\frac{\left|(b_{2}a^{-1-c_{2}},\ldots,b_{d}a^{-1-c_{d}})^{T}\right|}{a^{-1}}\ge a^{-c'}\left|(b_{2},\ldots,b_{d})^{T}\right|\label{eqn:est_shearvec2}
\end{equation}
where we used $c'=\min\left\{ c_{2},\ldots,c_{d}\right\} >0$ and
$a<1$. Now the fact that all matrix norms are equivalent allows us
to conclude
\begin{align*}
\left\Vert \left(h(a,b)\right)^{-1}\right\Vert  & \le C\cdot\max\left\{ a^{-1},\left|\left(b_{2}a^{-1-c_{2}},\ldots,b_{d}a^{-1-c_{d}}\right)^{T}\right|,\left|\left(a^{-c_{2}},\ldots,a^{-c_{d}}\right)^{T}\right|\right\} \\
 & \le C'\cdot a^{-1}~.
\end{align*}
On the other hand, using $0<c'\leq c_{i}\leq c<1$ and $0<a<1$, we
have by (\ref{eqn:est_shearvec2}) that
\begin{align*}
\left\Vert h(a,b)\right\Vert  & \le C'\cdot\max\left\{ a,\left|\left(b_{2},\ldots,b_{d}\right)^{T}\right|,\left|\left(a^{c_{2}},\ldots,a^{c_{d}}\right)^{T}\right|\right\} \\
 & \le C'\cdot\max\left\{ a,a^{c'}\right\} =C'\cdot a^{c'}~.
\end{align*}
This finally leads to 
\[
\left\Vert h^{-1}\right\Vert \le C''\cdot\left\Vert h\right\Vert ^{-1/c'},
\]
which is the desired inequality.

\section*{Concluding remarks}

Microlocal admissibility is a nontrivial condition on the dilation
group, as can be seen by the example provided by the shearlet dilation
group for $c'=\min\left\{ c_{2},\dots,c_{d}\right\} <0$.

It is easy to see that the various conditions studied in this paper
are preserved under conjugation: Whenever the pair $H,\mathcal{O}$
fulfils our technical assumptions \ref{assume:proper_dual}, and $H'=gHg^{-1}$
is given, then $H',g^{-T}\mathcal{O}$ also fulfil the assumptions.
Moreover, one easily verifies that microlocal admissibility and the
(weak) cone approximation property are also preserved under conjugation.
Together with a classification result from \cite{FuCuba}, this shows
that the list of examples studied in Section \ref{sect:examples}
exhausts all possible dilation groups with open orbits and compact
stabilizers that can arise in dimension two.

\section*{Acknowledgements}

This research was funded partly by the Excellence Initiative of the
German federal and state governments, and by DFG, under the contract
FU 402/5-1.

\bibliographystyle{abbrv}
\bibliography{wfset}

\end{document}